
\documentclass[11pt]{amsart}

\textheight=22cm
\textwidth=17cm
\hoffset=-2cm

\usepackage{amssymb}
\usepackage{amsmath}
\usepackage{amsthm}
\usepackage{enumerate}
\usepackage{algorithmic}
\usepackage[boxed]{algorithm}

\usepackage{color}
\usepackage{float}
\usepackage{tikz-cd}
\usetikzlibrary{arrows}

\usepackage[colorlinks]{hyperref}
\usepackage{mathtools}
\usepackage[english]{babel}

\newtheorem{theorem}{Theorem}[section]
\newtheorem{corollary}[theorem]{Corollary}
\newtheorem{proposition}[theorem]{Proposition}
\newtheorem{lemma}[theorem]{Lemma}

\newtheorem*{algCorrect}{Theorem A.1}

\theoremstyle{definition}
\newtheorem{definition}[theorem]{Definition}

\theoremstyle{remark}
\newtheorem{remark}[theorem]{Remark}
\newtheorem{example}[theorem]{Example}

\newcommand{\Pl}{Pl\"ucker}

\newcommand{\cN}{\mathcal{N}}
\newcommand{\Proj}{\textnormal{Proj}\,}
\newcommand{\Supp}{\textnormal{Supp}}
\newcommand{\Coeff}{\textnormal{Coeff}}

\newcommand{\sat}{{\textnormal{sat}}}
\newcommand{\reg}{\textnormal{reg}}
\newcommand{\Ht}{\textnormal{Ht}}
\newcommand{\Nf}{\mathrm{Nf}}

\newcommand{\Mf}{\mathcal{M}\textnormal{f}}
\newcommand{\hilb}{{\mathcal{H}\textnormal{ilb}}}
\newcommand{\hilbp}{{\mathcal{H}\textnormal{ilb}_{p(t)}^n}}

\newcommand{\PP}{\mathbb{P}}
\newcommand{\Af}{\mathbb{A}}

\newcommand{\SGred}{\xrightarrow{\  sG\ast\  }}
\newcommand{\SGredPar}{\xrightarrow{\ s\mathcal G\ast\ }}

\DeclareMathAlphabet{\mathpzc}{OT1}{pzc}{m}{it}

\begin{document} 

\title[Upgraded methods for marked schemes on a strongly stable ideal]{Upgraded methods for the effective computation of marked schemes on a strongly stable ideal} 

\author[C.~Bertone]{Cristina Bertone}
\address{Dipartimento di Matematica dell'Universit\`{a} di Torino\\ 
         Via Carlo Alberto 10, 
         10123 Torino, Italy}
\email{\href{mailto:cristina.bertone@unito.it}{cristina.bertone@unito.it}}

\author[F.~Cioffi]{Francesca Cioffi}
\address{Dipartimento di Matematica e Applicazioni dell'Universit\`{a} di Napoli Federico II,\\   via Cintia, 80126  Napoli, Italy
         }
\email{\href{mailto:francesca.cioffi@unina.it}{francesca.cioffi@unina.it}}

\author[P.~Lella]{Paolo Lella}
\address{Dipartimento di Matematica dell'Universit\`{a} di Torino\\ 
         Via Carlo Alberto 10, 
         10123 Torino, Italy}
\email{\href{mailto:paolo.lella@unito.it}{paolo.lella@unito.it}}
\urladdr{\url{http://www.personalweb.unito.it/paolo.lella}}

\author[M.~Roggero]{Margherita Roggero}
\address{Dipartimento di Matematica dell'Universit\`{a} di Torino\\ 
         Via Carlo Alberto 10, 
         10123 Torino, Italy}
\email{\href{mailto:margherita.roggero@unito.it}{margherita.roggero@unito.it}}


\keywords{Hilbert scheme, strongly stable ideal, polynomial reduction relation}
\subjclass[2010]{14C05, 14Q20, 13P10}

\begin{abstract}
Let $J\subset S=K[x_0,\ldots,x_n]$ be a monomial strongly stable ideal. The collection $\Mf(J)$ of the homogeneous polynomial ideals $I$, such that the monomials outside $J$ form a $K$-vector basis of $S/I$, is  called a {\em $J$-marked family}. It can be endowed with a structure of affine scheme, called a {\em $J$-marked scheme}. For special ideals $J$, $J$-marked schemes provide an open cover of the Hilbert scheme $\hilbp$, where $p(t)$ is the Hilbert polynomial of $S/J$. Those ideals more suitable to this aim are the \emph{$m$-truncation} ideals $\underline{J}_{\geq m}$ generated by the monomials of degree $\geq m$ in a saturated strongly stable monomial ideal $\underline{J}$. Exploiting a characterization of the ideals in $\Mf(\underline{J}_{\geq m})$ in terms of a Buchberger-like criterion, we compute the equations defining the $\underline{J}_{\geq m}$-marked scheme by a new reduction relation, called {\em superminimal reduction}, and obtain an embedding of  $\Mf(\underline{J}_{\geq m})$ in an affine space of low dimension. In this setting, explicit computations are achievable in many non-trivial cases. Moreover, for every $m$, we give a closed embedding $\phi_m: \Mf(\underline{J}_{\geq m})\hookrightarrow  \Mf(\underline{J}_{\geq m+1})$, characterize those $\phi_m$ that are isomorphisms in terms of the monomial basis of $\underline{J}$, especially we characterize the minimum integer $m_0$ such that $\phi_m$ is an isomorphism for every $m\geq m_0$.
\end{abstract}

\maketitle

\section*{Introduction}

Let $J$ be a monomial ideal of the polynomial ring $S=K[x_0,\ldots,x_n]$ in $n+1$ variables over a field $K$. In this paper,  we refine and develop the study begun in \cite{CR} to characterize the homogeneous polynomial ideals $I\subset S$ such that the monomials outside $J$ form a $K$-vector basis of the $K$-vector space $S/I$. If $J$ is strongly stable, such homogeneous ideals constitute a family $\Mf(J)$, that is called a $J$-marked family and that can be endowed in a very natural way with a structure of affine  scheme, called a $J$-marked scheme, which turns out to be homogeneous with respect to a non-standard grading and flat at $J$ (see \cite{CR}).  Moreover, $J$-marked schemes generalize the notion of Gr\"obner strata \cite{LR} because $\Mf(J)$ contains all the ideals having $J$ as initial ideal with respect to some term order; however in general $\Mf(J)$ contains also ideals which do not belong to a Gr\"obner stratum.

In this paper we focus on a particular class of strongly stable ideals: letting $\underline J$ be a saturated strongly stable ideal, we will consider the truncations ${\underline J}_{\geq m}$, for every positive integer $m$, because in this setting marked schemes give a theoretical and effective alternative to the study of Hilbert schemes as subvarieties of a Grassmannian. Theorem \ref{esempio} and Example \ref{no one-to-one} show the reason for the choice of this special setting.
Let $\hilbp$ be the Hilbert scheme that parameterizes all subschemes of $\mathbb P^n$ with Hilbert polynomial $p(t)$, $r$ be the Gotzmann number of $p(t)$ and $q(t)=\vert S_t\vert -p(t)=\binom{n+t}{n}-p(t)$ be the volume polynomial.
By theoretical results in \cite{BLR,CR,CLMR} we are able to compute first the set $\mathcal B_{p(t)}$ of all saturated strongly stable ideals $\underline J$ in $S$, such that $p(t)$ is the Hilbert polynomial of $S/\underline J$; then, for every ideal $\underline J\in \mathcal B_{p(t)}$, we compute explicit equations of degree $\leq deg(p(t))+2$ defining $\Mf({\underline J}_{\geq r})$ as an affine subscheme of $\Af^{p(r)q(r)}$. In particular, every $\Mf({\underline J}_{\geq r})$ can be embedded in $\hilbp$ as an open subscheme and moreover, as $\underline J$ varies in $ \mathcal B_{p(t)}$, the ${\underline J}_{\geq r}$-marked schemes $\Mf({\underline J}_{\geq r})$ form an open cover of $\hilbp$, up to  changes of coordinates in $\PP^n$. Observe that this is not true for $\Mf(\underline J)$, because in general $\Mf(\underline J)$ is not isomorphic to an open subset of $\hilbp$ (see Example \ref{espunti} and \cite[Section 5]{RT}).
Such computational method is effective because the dimension  ${p(r)q(r)}$  of the affine space in which the ${\underline J}_{\geq r}$-marked schemes $\Mf({\underline J}_{\geq r})$ are embedded is significantly lower than the number $\binom{\vert S_r\vert }{q(r)}$ of Pl\"ucker coordinates. However there is room for further significant improvements.

The present paper is inspired by two questions raised, on the one hand, by similarities of marked schemes with Gr\"obner strata and, on the other hand, by experimental observations on examples.

First, {we observed that   we could eliminate  a significant number of variables from the equations defining $\Mf({\underline J}_{\geq m})$ as an affine subscheme of $\Af^{p(m)q(m)}$, computed using the method developed in \cite{CR}; in this way we obtain equations of higher degree than the starting ones,}
but often more convenient to  use (for example, see \cite[Appendix]{CR}). This feature has already been observed and studied for Gr\"obner strata in \cite{LR}. The bottleneck is that elimination of variables is too time-consuming. From this we wondered how to obtain this new set of equations using in the computations only necessary variables, avoiding the elimination process.

Our second observation is that, for a fixed $\underline J\in \mathcal B_{p(t)}$, as the integer $m$ grows, the families parameterized by marked schemes $\Mf({\underline J}_{\geq m})$ become larger, up to a certain value of $m$ bounded by $r$. The study of relations among marked schemes $\Mf({\underline J}_{\geq m})$ as $m$ varies can improve the efficiency of the computational methods in \cite{CR}: indeed, if $\Mf({\underline J}_{\geq m})$ and $\Mf({\underline J}_{\geq m'})$ are isomorphic for some integers $m'<m$, then we can choose to compute defining equations that involve a lower number of variables, that is equations for  $\Mf({\underline J}_{\geq m'})\subseteq \Af^{p(m')q(m')}$. In particular, for applications to the study of Hilbert schemes, we would like to  determine a priori the minimum integer $m_0$ for which $\Mf({\underline J}_{\geq m_0})$ is isomorphic to an open subset of $\hilbp$, that is $\Mf({\underline J}_{\geq m_0})\simeq \Mf({\underline J}_{\geq r})$.

In this paper, considering truncated ideals ${\underline J}_{\geq m}$, we answer to both questions by a new reduction algorithm,  called \textit{superminimal reduction}, that uses, for every $I\in \Mf({\underline J}_{\geq m})$, its \textit{${\underline J}_{\geq m}$-superminimal basis} (see Definition \ref{superminimalbasis}), a special subset of the ${\underline J}_{\geq m}$-marked basis of $I$.

For every  strongly stable monomial ideal $J$, the  notion of $J$-marked basis (Definition \ref{def:defJbase}) is the main tool for the study of marked schemes in \cite{CR} and also the starting point of the  present paper. Indeed  a homogeneous ideal $I$ belongs to $\Mf(J)$ if and only if $I$ is generated by a $J$-marked basis $G$ (Proposition \ref{cor1}). This basis resembles a reduced Gr\"obner basis for $I$, where $J$ plays the role of the initial ideal and the strongly stable property plays the role of the term order.

Indeed, similarly to a reduced Gr\"obner basis, $G$ is a system of generators of $I$ that contains a polynomial $f_\alpha$ for every term $x^\alpha$ in the monomial basis of $J$: $f_\alpha=x^\alpha-T(f_\alpha)$ where no monomial appearing in $T(f_\alpha)$ belongs to $J$. Moreover, $G$ is characterized by a Buchberger-like criterion (Theorem \ref{BuchCrit1}) and allows to compute the $J$-reduced form modulo $I$ of every polynomial in $S$, by a Noetherian reduction process (Proposition \ref{construction of $J$-normal form}).

The $J$-superminimal basis of $I$ introduced in the present paper is a special subset $sG$ of $G$ containing a polynomial for every term in the monomial basis of the saturated ideal $\underline J$ (for the details, see Definitions \ref{defsupermin} and \ref{superminimalbasis}): the two sets $G$ and $sG$ are equal if and only if $\underline J=J$. Using only polynomials in $sG$ and the strongly stable property of $J$, we define a special process of reduction $\SGred $ called superminimal reduction.

\medskip

In the special case when $J$ is a truncation ${\underline J}_{\geq m}$ of a saturated strongly stable ideal $\underline J$, the ${\underline J}_{\geq m}$-superminimal basis $sG$ has very interesting properties. First of all in this case (but not in general) the superminimal reduction $\SGred$ turns out to be Noetherian (see Theorem \ref{ridsm}, (\ref{ridsm-i}) and Example \ref{no ssr}).
Moreover, although in general $sG$ is not a system of generators of $I$, it completely determines the ideal $I$  because we can solve the ideal-membership problem by the superminimal reduction process $\SGred$ (Theorem \ref{ridsm}, (\ref{ridsm-iv})). This allows to compute equations for $\Mf({\underline J}_{\geq m})$ as a subscheme of an affine space of dimension far lower than $p(m)q(m)$, without any variable elimination process (Theorem \ref{smallaffine}), answering the first question above.

In this new setting, in Theorem \ref{schisom} we compare the ${\underline J}_{\geq m}$-marked schemes $\Mf({\underline J}_{\geq m})$ for a fixed saturated $\underline J$ as $m$ varies, using superminimal bases. We  prove that for every $m$ there is a closed scheme-theoretical embedding $\phi_m: \Mf({\underline J}_{\geq m-1})\hookrightarrow  \Mf({\underline J}_{\geq m})$. Moreover, we provide an easy criterion on the monomial basis of $\underline J$ to characterize the integers $m$ for which $\phi_m$ is an isomorphism. Especially, this criterion allows to determine the minimum integer $m_0$ such that $\phi_m$ is an isomorphism for every $m\geq m_0$, and in particular $\Mf({\underline J}_{\geq m_0})$ is isomorphic to an open subset of $\hilbp$ (see \cite{BLR}).

\medskip

Our investigation on marked schemes lies in the framework  of the methods and results obtained in the last years by several authors \cite{CF,FR,LR,NS,Ro,RT} about families of ideals with a fixed monomial basis for the quotient. Another close framework is the one in \cite{BM,MoraR}, where the authors study the collection of all monomial ideals $J$ that are initial ideals of a fixed homogeneous ideal $I$ w.r.t. some term order.

In \cite{BLR}, the results of \cite{CR} and the ones of the present paper are applied to study relations among marked schemes and Hilbert schemes; in particular,  in \cite{BLR} the authors study how marked schemes can be used to obtain a computable open cover of $\hilbp$ that has also interesting theoretical features.
We are confident that these results, both theoretical and computational ones, may be helpful in the solution of some open problems about Hilbert schemes; {indeed, they have been already applied   in order to investigate the locus of points of the Hilbert scheme with bounded regularity (see \cite{BBR}); the ideas and strategies used by \cite{GFR}  to study deformations of ACM curves are inspired by the ones in the present paper;  in \cite{LS} the authors apply the computational strategy  to the Hilbert scheme of locally CM curves. Other investigations led by these tools are in progress.
 In the future, we are interested in deeply comparing marked bases with other kinds of Gr\"obner-like bases, referring to \cite{Mo2}.}

\medskip
In Section \ref{sec:notation}, we introduce notations and basic results and, in Section \ref{buch}, we recall the Buchber-ger-like criterion described in \cite{CR}, with some development that involves the Eliahou and Kervaire syzygies of a strongly stable ideal (Theorem \ref{BuchCrit1}, (\ref{buchcrit1-iii}) and Corollary \ref{sizEK}). Moreover, we compute sets of generators of the ideal $\mathfrak A_J$ that defines the structure of affine scheme of $\Mf(J)$ (see Corollary \ref{last} and Remark \ref{rem5}).

In Section \ref{secsupermin}, we define the superminimal reduction (Definition \ref{sminred}) and investigate its properties.  In Section \ref{buchsec} we describe a new Buchberger-like criterion for ${\underline J}_{\geq m}$-marked bases (Theorem \ref{nostrobuch}) and some variants of it (Corollary \ref{nostrobuchEK} and Theorem \ref{paolo}). In particular, the second variant leads to a remarkable improvement of the efficiency of explicit computational procedures.

In Section \ref{ctildate}, we focus on the ideal 
 that defines the structure of affine scheme of $\Mf({\underline J}_{\geq m})$ and we characterize the integers $m,m'$, $m>m'$, such that the schemes $\Mf({\underline J}_{\geq m})$ and $\Mf({\underline J}_{\geq m'})$ are isomorphic (Theorem \ref{schisom}).

Finally, in Section \ref{secexamples} we provide examples in which we apply the proved results and we  compute the equations defining the affine structure of a ${\underline J}_{\geq m}$-marked scheme in a \lq\lq small\rq\rq \ affine space, using the Algorithm that we describe  in the Appendix.

\section{Notations and generalities} \label{sec:notation}

Let $K$ be an algebraically closed field and $S:=K[x_0,\dots,x_n]$ ($K[x]$ for short) the polynomial ring in $n+1$ variables with $x_0<\dots<x_n$. We will denote by $x^\alpha=x_0^{\alpha_0} \cdots x_n^{\alpha_n}$ every monomial in $S$, where $\alpha=(\alpha_0,\ldots,\alpha_n)$ is its multi-index and $\vert\alpha\vert$ is its degree.

We say that a monomial $x^\gamma$ is divisible by $x^\alpha$ ($x^\alpha \mid x^\gamma$ for short) if there exists a monomial $x^\beta$ such that $x^\alpha \cdot x^\beta = x^\gamma$. If such monomial does not exist, we will write $x^\alpha \nmid x^\gamma$.  For every monomial $x^\alpha\not= 1$, we set $\min(x^\alpha):= \min\{x_i : x_i \mid x^\alpha\}$ and $\max(x^\alpha):= \max\{x_i : x_i \mid x^\alpha\}$.

We will denote by $>_{\tt{Lex}}$ the usual lexicographic order on the monomials of $S$: in our setting $x^\alpha>_{\tt{Lex}}x^\beta$ if the last non-null element of $\alpha-\beta$ is positive.

We consider the standard grading on $S = \bigoplus_{m\in\mathbb{Z}} S_m$, where $S_m$ is the additive group of homogeneous polynomials of degree $m$; we let $S_{\geqslant m} =\bigoplus_{m'\geq m} S_{m'}$ and in the same way, for every subset $A\subseteq S$, we let $A_m=A\cap S_m$ and $A_{\geqslant m}=A\cap S_{\geqslant m}$. Elements and ideals in $S$ are always supposed to be homogeneous.
\smallskip

We will say that a monomial $x^\beta$ can be obtained by a monomial $x^\alpha$ through an \emph{elementary  move} if $x^\alpha x_j = x^\beta x_i$ for some variables $x_i\neq x_j$. In particular, if $i < j$, we say that $x^\beta$ can be obtained by $x^\alpha$ through an \emph{increasing} elementary move and we write $x^\beta = \mathrm{e}_{i,j}^{+}(x^\alpha)$, whereas if $i > j$ the move is said to be \emph{decreasing} and we write $x^\beta = \mathrm{e}_{i,j}^{-} (x^\alpha)$. The transitive closure of the relation $x^\beta > x^\alpha$ if $x^\beta = \mathrm{e}_{i,j}^{+}(x^\alpha)$  gives a partial order on the set of monomials of a fixed degree, that we will denote by $>_{B}$ and that is often called \emph{Borel partial order}:
\[
x^\beta  >_{B} x^\alpha\ \Longleftrightarrow\ \exists\  x^{\gamma_1}, \dots, x^{\gamma_t} \text{ such that } x^{\gamma_1} = \mathrm{e}^+_{i_0,j_0}(x^\alpha),\    \dots\ ,x^\beta = \mathrm{e}^{+}_{i_t,j_t}(x^{\gamma_t})
\]
for suitable indexes $i_k,j_k$.
In analogous way, we can define the same relation using decreasing moves:
\[
x^\beta  >_{B} x^\alpha\ \Longleftrightarrow\ \exists\  x^{\delta_1}, \dots, x^{\delta_s}  \text{ such that } x^{\delta_1} = \mathrm{e}^{-}_{h_0,l_0}(x^\beta),\ \dots\ ,x^\alpha = \mathrm{e}^{-}_{h_s,l_s}(x^{\delta_s})
\]
for suitable indexes $i_k,j_k$.
Note that every term order $\succ$ is a refinement of the Borel partial order $>_B$, that is $x^\beta >_B x^\alpha$ implies that $x^\beta \succ x^\alpha$.

\begin{definition}\label{definBorel}
An ideal $J \subset K[x]$ is said to be \emph{strongly stable} if every monomial $x^\beta$ such that $x^\beta >_{B} x^\alpha$, with $x^\alpha \in J$, belongs to $J$.
\end{definition}

A strongly stable ideal is always \emph{Borel fixed}, that is fixed by the action of the Borel subgroup of upper triangular matrices of $GL(n+1)$. If $ch(K)=0$, also the vice versa holds (e.g. \cite{D}) and \cite{Gall} guarantees that in generic coordinates the initial ideal of an ideal $I$, w.r.t. a fixed term order, is a constant Borel fixed monomial ideal called the {\em generic initial ideal} of $I$.

If $J$ is a monomial ideal in $S$, $B_J$ will denote its monomial basis and $\cN(J)$ its \emph{sous-escalier}, that is the set of monomials not belonging to $J$.

An homogeneous ideal $I$ is $m$-{\it regular} if the $i$-th syzygy module of $I$ is generated in degree $\leq m+i$, for all $i\geq 0$. The {\it regularity}  of $I$
is the smallest integer $m$ for which $I$ is $m$-regular;  {we denote it by $\reg(I)$}. The {\it saturation} of a homogeneous ideal $I$ is $I^{\sat}=\{f\in S\ \vert \ \ \forall \ j=0,\ldots,n,\exists \ r \in {\mathbb N} : x_j^r f \in I\}$. The ideal $I$ is {\it saturated} if $I^{\sat}=I$ and is {\it $m$-saturated} if $(I^{\sat})_t = I_t$ for each $t\geq m$. The {\em satiety}  of $I$ is the smallest integer $m$ for which $I$ is $m$-saturated; {we denote it by $\sat(I)$}.

We recall that if $J$ is strongly stable then $\reg(J) = \max \{\deg x^\alpha \ : \ x^\alpha \in B_J\}$ \cite[Proposition 2.9]{BS} and $\sat(J)=\max \{\deg x^\alpha \ : \ x^\alpha \in B_J \text{ and } x_0 \mid x^\alpha\}$ (for example, see \cite[Corollary 2.10]{Gr}).

\begin{lemma} Let $J$ be a strongly stable ideal in $K[x_0, \dots, x_n]$. Then:
\begin{enumerate}[(i)]
\item $x^\alpha \in J\setminus B_J\ \Rightarrow\ \dfrac{x^\alpha}{\min(x^\alpha)} \in J$;
\item $x^\beta \in \cN(J)$ and  $x_ix^\beta \in J \ \Rightarrow$ either $x_ix^\beta \in B_J$ or $x_i > \min(x^\beta)$.
\end{enumerate}
\end{lemma}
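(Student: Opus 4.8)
The plan is to reduce everything to a single increasing elementary move: I would prove (i) directly from the definition of strongly stable ideal, and then obtain (ii) as an immediate consequence of (i).

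For (i), I would argue as follows. Since $x^\alpha \in J\setminus B_J$, the monomial $x^\alpha$ is a \emph{proper} multiple of some element of $B_J$; in particular there is at least one variable $x_k$ dividing $x^\alpha$ with $x^\alpha/x_k \in J$. Set $x_j:=\min(x^\alpha)$. If $k=j$ there is nothing to prove, so suppose $j<k$. Because $x_j$ divides $x^\alpha$ and $j\neq k$, it still divides $x^\alpha/x_k$, hence the monomial $x^\alpha/x_j=(x^\alpha/x_k)\cdot x_k/x_j = \mathrm{e}^{+}_{j,k}(x^\alpha/x_k)$ is obtained from $x^\alpha/x_k\in J$ by an increasing elementary move. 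Thus $x^\alpha/x_j>_B x^\alpha/x_k$, and strong stability gives $x^\alpha/\min(x^\alpha)=x^\alpha/x_j\in J$, as wanted.

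For (ii), assume $x_ix^\beta\notin B_J$ and suppose, for contradiction, that $x_i\leq \min(x^\beta)$ (if $x^\beta=1$, then $x_ix^\beta=x_i\in J$ forces $x_i\in B_J$, against the hypothesis, so we may assume $x^\beta\neq 1$). Then $x_i$ is the smallest variable dividing $x_ix^\beta$, i.e. $\min(x_ix^\beta)=x_i$. Since $x_ix^\beta\in J\setminus B_J$, part (i) applied to $x_ix^\beta$ yields $x^\beta=(x_ix^\beta)/\min(x_ix^\beta)\in J$, contradicting $x^\beta\in\cN(J)$. Therefore either $x_ix^\beta\in B_J$ or $x_i>\min(x^\beta)$.

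I do not expect any serious obstacle: the statement is elementary bookkeeping with the Borel order. The only two points deserving a word of care are, in (i), that one merely needs the \emph{existence} of a variable $x_k$ with $x^\alpha/x_k\in J$ (no control on which variable works is required, and in general $x_k\neq\min(x^\alpha)$), and in (ii) the degenerate case $x^\beta=1$, which is disposed of in one line as above.
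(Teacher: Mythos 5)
Your proof is correct and is exactly the kind of direct argument the paper has in mind when it dismisses the lemma with "both properties follow from Definition \ref{definBorel}": part (i) via a single increasing elementary move $\mathrm{e}^{+}_{j,k}$ applied to $x^\alpha/x_k\in J$, and part (ii) as a formal consequence of (i). The two points you flag (choosing \emph{some} variable $x_k$ with $x^\alpha/x_k\in J$, and the degenerate case $x^\beta=1$, where $\min(x^\beta)$ is not even defined) are handled correctly, so nothing is missing.
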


\begin{proof}
Both properties follow from Definition \ref{definBorel}.
\end{proof}

\begin{definition}\label{satmon}
For every monomial $x^\alpha$ in $S$ we denote by $x^{\underline{\alpha}}$ the monomial obtained putting $x_0=1$. Analogously, if $J$ is a monomial ideal in $K[x]$, we denote by $\underline J$ the ideal in $K[x]$ generated by $\{x^{\underline\alpha}\ : \ x^\alpha \in B_J\}$.
\end{definition}

If $J$ is strongly stable, then ${J}^{\sat}=\underline J$ (this follows straightforwardly from \cite[Corollary 2.10]{Gr}); in particular, the set  $\{x^{\underline\alpha}\ : \ x^\alpha \in B_J\}$ of the monomials $x^{\underline \alpha}$, such that $x^\alpha=x^{\underline \alpha}\cdot x_0^t$ belongs to $B_J$ for a suitable $t\geq 0$, contains the monomial basis $B_{\underline J}$.

Many tools we are going to use were introduced in \cite{RStu} and developed in \cite{CR}. For this reason,  we now resume some notations and definitions given in those papers.

\begin{definition}
For any non-zero homogeneous polynomial $f \in S$, the \textit{support} of $f$ is the set $\Supp(f)$ of monomials that appear in $f$ with a non-zero coefficient.
\end{definition}

\begin{definition}[\cite{RStu}]
A \emph{marked polynomial} is a polynomial $f\in S$ together with a specified monomial of $\Supp(f)$ that will be called \emph{head term} of $f$ and denoted by $\Ht(f)$.
\end{definition}

\begin{remark}
Although in this paper we use the word ``monomial'', we say ``head term'' for coherency with the notation introduced by \cite{RStu}. Anyway, in this paper there will be no possible ambiguity on the meaning of \ ``head term of $f$'', because we will always consider marked polynomials $f$ such that the coefficient of $\Ht(f)$ in $f$ is 1.
\end{remark}

\begin{definition}[\cite{CR}]
Given a monomial ideal $J$ and an ideal $I$, a polynomial is {\em $J$-reduced} if its support is contained in $\cN(J)$ and a {\em $J$-reduced form modulo $I$} of a polynomial $h$ is a polynomial $h_0$ such that $h-h_0\in I$ and $\Supp(h_0) \subseteq \cN(J)$. If {there is a unique} $J$-reduced form modulo $I$ of $h$, we call it \emph{$J$-normal form modulo $I$} and denote it by $\Nf(h)$.
\end{definition}

Note that every polynomial $h$ has a unique $J$-reduced form modulo an ideal $I$ if and only if $\cN(J)$ is a $K$-basis for the quotient $S/I$ or, equivalently, $S=I\oplus \langle \cN (J) \rangle $ as a $K$-vector space. If moreover $I$ is homogeneous, the $J$-reduced form modulo $I$ of a homogeneous polynomial is supposed to be homogeneous too. These facts motivate the following definitions.

\begin{definition}\label{def:defJbase}
A finite set $G$ of homogeneous marked polynomials $f_\alpha=x^\alpha-\sum c_{\alpha\gamma} x^\gamma$, with  $\Ht(f_\alpha)=x^\alpha$, is called a $J$-\emph{marked set} if the head terms $\Ht(f_\alpha)$ form the monomial basis $B_J$ of a monomial ideal $J$, are pairwise different and every $x^\gamma$ belongs to $\cN(J)$, i.e.  $\vert\Supp(f_\alpha)\cap J \vert =1$. We call \emph{tail} of $f_\alpha$ the polynomial $T(f_\alpha):=\Ht(f_\alpha)-f_\alpha$, so that $\Supp(T(f_\alpha))\subseteq \mathcal N(J)$.  A $J$-marked set $G$ is a $J$-\emph{marked basis} if $\cN(J)$ is a basis of $S/(G)$ as a $K$-vector space.
\end{definition}

\begin{definition}
The   collection of all the homogeneous ideals $I$ such that $\cN(J)$ is a basis of the quotient $S/I$ as a $K$-vector space will be denoted by $\Mf(J)$ and called a $J$-\emph{marked family}. If $J$ is a strongly stable ideal, then $\Mf(J)$ can be endowed with a natural structure of scheme (see \cite[Section 4]{CR}) that we call $J$-\emph{marked scheme}.
\end{definition}

\begin{remark}\label{Jmarkedbasis}\
\begin{enumerate}[(i)]
\item \label{rk:Jmarkedbasis_i} The ideal $(G)$ generated by a $J$-marked basis $G$ has the same Hilbert function of $J$, hence $\dim_K J_m = \dim_K (G)_m$, by the definition of $J$-marked basis itself. Moreover, note that a $J$-marked basis is unique for the ideal that it generates, by the uniqueness of the $J$-normal forms modulo $I$ of the monomials in $B_J$.
\item \label{rk:Jmarkedbasis_ii} $\Mf(J)$ contains every homogeneous ideal having $J$ as initial ideal w.r.t. some term order, but it might also contain other ideals: see \cite[Example 3.18]{CR}.
\item \label{rk:Jmarkedbasis_iii} When $J$ is a strongly stable ideal, all homogeneous polynomials have $J$-reduced forms modulo every ideal generated by a $J$-marked set $G$ (see \cite[Theorem 2.2]{CR}).
\end{enumerate}
\end{remark}

\begin{proposition}\label{cor1} Let $J$ be a strongly stable ideal, $I$ be a homogeneous ideal generated by a $J$-marked set $G$. The following facts are equivalent:
\begin{enumerate}[(i)]
\item\label{it:cor1_i}  $I\in \Mf(J)$
\item\label{it:cor1_ii} $G$ is a $J$-marked basis;
\item\label{it:cor1_iii} $\dim_kI_{t}=\dim_K J_{t}$, for every integer $t$;
\item \label{it:cor1_iv} if $h \in I$ and $h$ is $J$-reduced, then $h=0$.
\end{enumerate}
\end{proposition}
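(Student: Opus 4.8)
The plan is to establish the chain of equivalences by proving $(i) \Leftrightarrow (ii)$ essentially by definition, $(ii) \Leftrightarrow (iii)$ via a Hilbert function comparison, and then closing the loop with $(iii) \Leftrightarrow (iv)$, which is the step carrying the real content. First, the equivalence $(i) \Leftrightarrow (ii)$ is immediate: by Definition \ref{def:defJbase}, $G$ being a $J$-marked basis means precisely that $\cN(J)$ is a $K$-basis of $S/(G) = S/I$, and by definition of $\Mf(J)$ this is exactly the condition $I \in \Mf(J)$.

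Next I would treat $(ii) \Leftrightarrow (iii)$. One direction is contained in Remark \ref{Jmarkedbasis}\eqref{rk:Jmarkedbasis_i}: if $G$ is a $J$-marked basis then $\dim_K I_t = \dim_K J_t$ for all $t$. For the converse, the key input is Remark \ref{Jmarkedbasis}\eqref{rk:Jmarkedbasis_iii}: since $J$ is strongly stable and $G$ is a $J$-marked set, every homogeneous polynomial has a $J$-reduced form modulo $I$, which means that in each degree $t$ the monomials $\cN(J)_t$ span $(S/I)_t$, i.e.\ $\dim_K (S/I)_t \leq |\cN(J)_t| = \dim_K S_t - \dim_K J_t$. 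Equivalently $\dim_K I_t \geq \dim_K J_t$ in every degree. If moreover $\dim_K I_t = \dim_K J_t$ for all $t$, then the spanning set $\cN(J)_t$ has cardinality equal to $\dim_K (S/I)_t$, hence is a basis; this holding in every degree says exactly that $\cN(J)$ is a $K$-basis of $S/I$, so $G$ is a $J$-marked basis.

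Finally, for $(iii) \Leftrightarrow (iv)$, assume $(iii)$ and let $h \in I$ be homogeneous, say of degree $t$, and $J$-reduced, so $\Supp(h) \subseteq \cN(J)_t$. By the argument above, the images of $\cN(J)_t$ in $(S/I)_t$ are linearly independent (a spanning set of cardinality $\dim_K(S/I)_t$), so the image of $h$ in $S/I$ being zero forces all its coefficients to vanish, i.e.\ $h = 0$. Conversely, assume $(iv)$. We always have $\dim_K I_t \geq \dim_K J_t$ by the spanning argument; suppose for contradiction that $\dim_K I_t > \dim_K J_t$ for some $t$, equivalently $\dim_K(S/I)_t < |\cN(J)_t|$. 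Then the images of the monomials $\cN(J)_t$ in $(S/I)_t$ are linearly dependent, yielding a nonzero homogeneous $K$-linear combination $h$ of monomials in $\cN(J)_t$ with $h \in I$; but $h$ is $J$-reduced and nonzero, contradicting $(iv)$. Hence $\dim_K I_t = \dim_K J_t$ for every $t$, which is $(iii)$. The main obstacle throughout is ensuring that ``$\cN(J)$ spans $S/I$ in each degree'' is available, and this is exactly what the strongly stable hypothesis on $J$ buys us through Remark \ref{Jmarkedbasis}\eqref{rk:Jmarkedbasis_iii} (equivalently \cite[Theorem 2.2]{CR}); without it the reduction process need not terminate and the inequality $\dim_K I_t \geq \dim_K J_t$ could fail.
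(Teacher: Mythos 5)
Your proof is correct, and it closes the loop differently from the paper. The paper handles (i)$\Leftrightarrow$(ii)$\Leftrightarrow$(iii) by citing \cite[Corollaries 2.3, 2.4, 2.5]{CR} and then proves (i)$\Leftrightarrow$(iv) directly through uniqueness of $J$-reduced forms: if $I\in\Mf(J)$ the reduced form of an element of $I$ must be $0$, and conversely the difference of two $J$-reduced forms of the same polynomial is a $J$-reduced element of $I$, hence zero under (iv), which restores uniqueness and thus the basis property. You instead make the whole argument self-contained modulo \cite[Theorem 2.2]{CR} (Remark \ref{Jmarkedbasis}(\ref{rk:Jmarkedbasis_iii})): existence of homogeneous $J$-reduced forms gives that $\cN(J)_t$ spans $(S/I)_t$, hence $\dim_K I_t\geq\dim_K J_t$ in every degree, and then (ii)$\Leftrightarrow$(iii) and (iii)$\Leftrightarrow$(iv) are pure dimension counts: equality of dimensions turns the spanning set $\cN(J)_t$ into a basis, while a strict inequality forces a nontrivial dependence relation, i.e.\ a nonzero $J$-reduced element of $I$, contradicting (iv). The mathematical content is the same --- both arguments reduce everything to the fact that a nonzero $J$-reduced element of $I$ obstructs the basis property --- but your version buys a proof that does not outsource (i)--(iii) to \cite{CR} and makes explicit where the strongly stable hypothesis enters (through the Noetherian reduction giving $\dim_K I_t\geq\dim_K J_t$), at the cost of being longer than the paper's uniqueness argument, which more directly exhibits (iv) as the statement that normal forms modulo $I$ are unique. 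The only cosmetic remark is that in (iii)$\Rightarrow$(iv) you take $h$ homogeneous; since the paper's convention is that elements of $S$ are homogeneous (and in any case $I$ is homogeneous and $J$-reducedness passes to homogeneous components), this is harmless.
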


\begin{proof} For the equivalence among the first three statements, see \cite[Corollaries 2.3, 2.4, 2.5]{CR}. For the equivalence among (\ref{it:cor1_i}) and (\ref{it:cor1_iv}), observe that if $I\in \Mf(J)$, then every polynomial has a unique $J$-reduced form modulo $I$; so, the $J$-reduced form modulo $I$ of a polynomial of $I$ must be null. Vice versa, it is enough to show that every polynomial $f$ has a unique $J$-reduced form modulo $I$. Let $\bar f$ and $\bar{\bar f}$ be two $J$-reduced forms modulo $I$ of $f$. Then, $\bar f -\bar{\bar f}$ is a $J$-reduced polynomial of $I$ because $f-\bar f$ and $f-\bar{\bar f}$ belong to $I$ by definition. We are done, because $\bar f -\bar{\bar f}$ is null by the hypothesis.
\end{proof}

\section{\texorpdfstring{Background on Buchberger-like criterion for $J$-marked bases and some developments}{Background on Buchberger-like criterion for J-marked bases and some developments}}\label{buch}

In this section we recall and develop some results of \cite{CR}. Throughout this section, {\em $J$ is a strongly stable ideal and $G$ is a $J$-marked set}.

\begin{definition}\label{def:$V_m$}
Let $m_J:=\min\{t : J_t\neq (0)\}$ be the initial degree of $J$. For every $\ell\geq m_J$ we define the set
 \[
 W_\ell:=\{x^\delta f_\alpha \ \vert \  f_\alpha \in G \text{ and } \ \vert \ \delta+\alpha\vert=\ell\}
 \]
that becomes a set of marked polynomials by letting $\Ht(x^\delta f_\alpha)=x^{\delta+\alpha}$. We set $W=\cup_\ell W_\ell$.
For every $\ell\geq m_J$ we also define a special subset of $W_\ell$:
\[
V_\ell:=\{x^\delta f_\alpha \in W_\ell \ \vert \ x^\delta=1 \text{ or }\max(x^\delta)\leq \min(x^\alpha)\}.
\]
We let $V=\cup_\ell V_\ell$. Moreover, $\langle V\rangle$ denotes the vector space generated by the polynomials in $V$ and $\stackrel{V_\ell}\longrightarrow$ is the reduction relation on homogeneous polynomials of degree $\ell$ defined in the usual sense of Gr\"obner basis theory (see also \cite[Proposition 3.6]{CR}).
\end{definition}

The above Definition is equivalent to the Definition 3.2 in \cite{CR} due to Remark 3.3 of the same paper.

Note that $(G)_\ell$ is generated by $W_\ell$ as a $K$-vector space.

\begin{lemma}\label{forseteo}
Let $J$ be a strongly stable ideal. An ideal $I$ generated by a $J$-marked set $G$ belongs to $\Mf(J)$ if and only if $\langle W\rangle=\langle V\rangle $ as $K$-vector spaces.
\end{lemma}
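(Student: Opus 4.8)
The plan is to transfer the problem to a single sufficiently large degree and then argue degree-by-degree. First I would recall that $(G)_\ell$ is spanned as a $K$-vector space by $W_\ell$ (this is stated right after Definition \ref{def:$V_m$}), so that $\langle W \rangle = (G)$ and hence $\dim_K (G)_\ell = \dim_K \langle W \rangle_\ell$ for every $\ell$. On the other hand, since every element of $V_\ell$ is of the form $x^\delta f_\alpha$ with $\Ht(x^\delta f_\alpha) = x^{\delta+\alpha} \in J_\ell$ and these head terms, as $x^\delta f_\alpha$ ranges over $V_\ell$, hit \emph{each} monomial of $J_\ell$ (by the defining condition $\max(x^\delta) \le \min(x^\alpha)$ together with the strongly stable hypothesis — this is the content of the fact that $V_\ell$ is a triangular system over the monomial basis of $J_\ell$, cf. \cite[Proposition 3.6]{CR}), the marked polynomials in $V_\ell$ are $K$-linearly independent and therefore $\dim_K \langle V \rangle_\ell \le \dim_K J_\ell$, with equality forced as soon as we know $\langle V \rangle_\ell$ has the right dimension.

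Next, putting these observations together: we always have $\langle V \rangle \subseteq \langle W \rangle = (G)$, and the key numerical identity is
\[
\dim_K \langle W \rangle_\ell = \dim_K (G)_\ell, \qquad \dim_K \langle V \rangle_\ell = \dim_K J_\ell
\]
for every $\ell$, where the second equality uses the triangularity just discussed. Now invoke Proposition \ref{cor1}: $I = (G) \in \Mf(J)$ if and only if $\dim_K I_\ell = \dim_K J_\ell$ for every $\ell$. If $I \in \Mf(J)$, then $\dim_K \langle W \rangle_\ell = \dim_K (G)_\ell = \dim_K J_\ell = \dim_K \langle V \rangle_\ell$, and combined with the inclusion $\langle V \rangle_\ell \subseteq \langle W \rangle_\ell$ this gives $\langle V \rangle_\ell = \langle W \rangle_\ell$ for all $\ell$, i.e. $\langle V \rangle = \langle W \rangle$. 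Conversely, if $\langle V \rangle = \langle W \rangle$, then $\dim_K (G)_\ell = \dim_K \langle W \rangle_\ell = \dim_K \langle V \rangle_\ell = \dim_K J_\ell$ for every $\ell$, so $I \in \Mf(J)$ again by Proposition \ref{cor1}.

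The step I expect to be the main obstacle is establishing cleanly that $\dim_K \langle V \rangle_\ell = \dim_K J_\ell$, i.e. that the marked polynomials in $V_\ell$ are linearly independent with head terms exactly covering the monomial basis of $J_\ell$. This is essentially the assertion that $V$ is already "triangular enough" to detect the Hilbert function of $J$, and the subtlety is purely about the head terms: one must check that for every monomial $x^\eta \in J_\ell$ there is a (unique) way to write $x^\eta = x^\delta x^\alpha$ with $x^\alpha \in B_J$ and $\max(x^\delta) \le \min(x^\alpha)$ — this is exactly the Eliahou–Kervaire-type normal form for monomials in a strongly stable ideal, and it is precisely what Definition \ref{def:$V_m$} was set up to encode (and what \cite[Proposition 3.6]{CR} supplies). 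Once that combinatorial fact is in hand, linear independence of the $V_\ell$ polynomials is immediate because distinct elements of $V_\ell$ have distinct head terms and each tail lies in $\langle \cN(J) \rangle$, so no cancellation among head terms is possible; the rest of the argument is the bookkeeping with dimensions and the appeal to Proposition \ref{cor1} sketched above.
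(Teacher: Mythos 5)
Your overall skeleton is the same as the paper's: count $\vert V_\ell\vert$ against $\dim_K J_\ell$, use $\langle W\rangle_\ell=(G)_\ell$, and conclude via Proposition \ref{cor1}. The genuine gap is in the step you yourself flag as the crux, namely that the elements of $V_\ell$ are linearly independent ``because distinct elements of $V_\ell$ have distinct head terms and each tail lies in $\langle \cN(J)\rangle$''. The second half of that claim is false: an element of $V_\ell$ is $x^\delta f_\alpha$ with tail $x^\delta T(f_\alpha)$, and although $\Supp(T(f_\alpha))\subseteq \cN(J)$, after multiplication by $x^\delta$ the tail in general meets $J_\ell$ and can even contain head terms of other elements of $V_\ell$. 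For instance, with $J=(x_2,x_1^2)\subset K[x_0,x_1,x_2]$ and $f_{x_2}=x_2-cx_1$, the element $x_1f_{x_2}=x_1x_2-cx_1^2$ of $V_2$ has the head term of $f_{x_1^2}\in V_2$ in its tail. So ``no cancellation among head terms'' is not immediate, and the identity $\dim_K\langle V\rangle_\ell=\dim_K J_\ell$ — which your forward direction $I\in\Mf(J)\Rightarrow\langle V\rangle=\langle W\rangle$ genuinely needs (the converse only uses the inequality $\dim_K\langle V\rangle_\ell\leq\dim_K J_\ell$) — is not proved as written.

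The missing fact is true and can be repaired in two ways. Either prove independence by triangularity with respect to the order of Definition \ref{order2}: by Lemma \ref{descLex} (equivalently Lemma \ref{reduction2}, (\ref{reduction2-iii})) the head term $x^{\delta+\alpha}=x^\alpha\ast_J x^\delta$ of an element of $V_\ell$ can only occur in the tail of an element $x^{\delta'}f_{\alpha'}\in V_\ell$ with $x^{\delta}<_{\mathtt{Lex}}x^{\delta'}$, so in a vanishing combination the summand with $x^\delta$ maximal w.r.t. $\mathtt{Lex}$ cannot cancel — this is exactly the argument used in the uniqueness part of Corollary \ref{cor:formula}. Or avoid independence altogether: Proposition \ref{construction of $J$-normal form} gives $S_\ell=\langle V_\ell\rangle+\langle\cN(J)_\ell\rangle$, hence $\dim_K\langle V_\ell\rangle\geq\dim_K J_\ell$, which together with the count $\vert V_\ell\vert=\dim_K J_\ell$ yields the equality you need. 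For comparison, the paper's own proof gets by with the same counting bound $\dim_K\langle V_\ell\rangle\leq\dim_K J_\ell$ together with $\dim_K\langle W_\ell\rangle=\dim_K I_\ell\geq\dim_K J_\ell$ from \cite[Corollary 2.3]{CR}, before invoking Proposition \ref{cor1}.
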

\begin{proof}
It is sufficient to observe that for every $\ell\geq m_J$, the number of elements in $V_{\ell}$ is equal to the number of monomials in $J_{\ell}$, so $\dim \langle V_\ell\rangle \leq \dim J_\ell$. On the other hand, $\dim \langle W_\ell \rangle=\dim I_\ell\geq \dim J_\ell$ by \cite[Corollary 2.3]{CR}. By Proposition \ref{cor1} we get the equivalence of the statements.
\end{proof}

We have already recalled that, when $J$ is a strongly stable ideal, every homogeneous polynomial has a $J$-reduced form modulo an ideal generated by a $J$-marked set $G$ (Remark \ref{Jmarkedbasis} (\ref{rk:Jmarkedbasis_iii})). Further, a $J$-reduced form of a homogeneous polynomial can be constructed by the reduction relation $\stackrel{V_\ell}\longrightarrow$, as it is recalled by next Proposition.

\begin{proposition} \label{construction of $J$-normal form} {\rm \cite[Proposition 3.6]{CR}}
With the above notation, every monomial $x^\beta\in J_\ell$ can be reduced to a $J$-reduced form modulo $(G)$ in a finite number of reduction steps, using only polynomials of $V_\ell$. Hence, the reduction relation $\stackrel{V_\ell}\longrightarrow$ is Noetherian.
\end{proposition}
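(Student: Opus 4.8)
The plan is to show that the reduction relation $\stackrel{V_\ell}\longrightarrow$ terminates by exhibiting a well-founded order on the monomials of $S_\ell$ that strictly decreases at each reduction step. The natural candidate is the Borel partial order $>_B$ refined by some term order $\succ$ (for instance $>_{\tt{Lex}}$), so that we work with the total order $\succ$ on $S_\ell$, which is a well-order on a finite set. The key observation is that each polynomial $x^\delta f_\alpha \in V_\ell$ is marked with head term $\Ht(x^\delta f_\alpha) = x^{\delta+\alpha}$, and a single reduction step using $x^\delta f_\alpha$ replaces an occurrence of $x^{\delta+\alpha}$ in the polynomial being reduced by $-x^\delta T(f_\alpha)$, whose support consists of monomials of the form $x^{\delta} x^\gamma$ with $x^\gamma \in \cN(J)$.

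First I would show that every monomial $x^\beta$ appearing in $x^\delta T(f_\alpha)$ satisfies $x^{\delta+\alpha} \succ x^\beta$. Since $x^\gamma \in \Supp(T(f_\alpha)) \subseteq \cN(J)$ and $x^\alpha \in J$, we have $x^\alpha \succ x^\gamma$ (using that $x^\alpha \in B_J$ and $J$ strongly stable forces $x^\alpha >_B$ or $\succ$ any monomial of the sous-escalier of the same degree that is comparable; more directly, since $\succ$ refines $>_B$ and $J$ is strongly stable, $x^\alpha \succ x^\gamma$ follows — this is exactly the point where the strongly stable hypothesis and Remark \ref{Jmarkedbasis}(\ref{rk:Jmarkedbasis_iii}) / \cite[Theorem 2.2]{CR} enter). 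Multiplying by $x^\delta$ preserves the strict inequality, so $x^{\delta+\alpha} \succ x^\delta x^\gamma$ for every such $\gamma$. Hence each reduction step removes the monomial $x^{\delta+\alpha}$ from the support and introduces only monomials strictly smaller in $\succ$; since $S_\ell$ is finite, the multiset of monomials (ordered by $\succ$) strictly decreases, and the process terminates. This proves Noetherianity.

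Next I would verify that a normal form obtained this way is $J$-reduced: when no further reduction using $V_\ell$ is possible, no monomial of the support lies in $J_\ell$. Here one must check that irreducibility with respect to $V_\ell$ — rather than the larger set $W_\ell$ — still forces the support out of $J$; this is where I expect the main obstacle to lie, and it is precisely the content addressed in \cite[Proposition 3.6]{CR} and Remark 3.3 there. The argument is that for any $x^\beta \in J_\ell$, writing $x^\beta = x^\delta \cdot x^\alpha$ with $x^\alpha \in B_J$ chosen via the strongly stable structure (e.g. peeling off $\max$ of $x^\beta$ repeatedly, as in the Eliahou–Kervaire decomposition) yields a factorization with $\max(x^\delta) \leq \min(x^\alpha)$, so $x^\delta f_\alpha \in V_\ell$ and reduction is possible. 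Thus any $x^\beta \in J_\ell$ is reducible, and an irreducible polynomial — in particular a terminal form of the reduction starting from $x^\beta$ — has support in $\cN(J)$. Combined with the fact that at every step the difference between the original and reduced polynomial lies in $(G)_\ell$ (each step subtracts a multiple of an element of $G$), the terminal form is a $J$-reduced form modulo $(G)$, completing the proof. The delicate verification of the $V_\ell$-versus-$W_\ell$ interplay and the explicit choice of factorization respecting the $\max/\min$ condition is the technical heart; everything else is the standard termination argument via a term order refining $>_B$.
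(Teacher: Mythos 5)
Your termination argument does not work, and the failure is at the central step. You claim that because $J$ is strongly stable and $\succ$ refines $>_B$, one has $x^\alpha \succ x^\gamma$ for every $x^\alpha\in B_J$ and every $x^\gamma\in\Supp(T(f_\alpha))\subseteq\cN(J)$ of the same degree. Strong stability only excludes $x^\gamma >_B x^\alpha$; it does not make the two monomials $>_B$-comparable, and for incomparable monomials a term order refining $>_B$ may well put $x^\gamma \succ x^\alpha$. Worse, no single term order need exist that makes every head term of a $J$-marked set dominate its tail: if it did, every $J$-marked set would be compatible with a term order and every ideal of $\Mf(J)$ would arise as a Gr\"obner deformation, contradicting Remark \ref{Jmarkedbasis}~(\ref{rk:Jmarkedbasis_ii}) (see \cite[Example 3.18]{CR}) and the fact, recalled in Section \ref{secexamples}, that some truncations $\underline J_{\geq m}$ are not segment ideals for any term order. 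So the multiset-descent on monomials with respect to $\succ$ simply does not decrease in general, and with it the whole first paragraph of your proof collapses. This is not a repairable bookkeeping issue: for a marked set that is not a Gr\"obner basis, a reduction step can genuinely introduce monomials that are $\succ$-larger than the one removed.

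The actual mechanism, which is the one used in \cite[Proposition 3.6]{CR} and is visible in this paper through Lemma \ref{descLex} and Lemma \ref{reduction2}, is a descent not on the monomials being reduced but on the \emph{multipliers} in the $\ast_J$-decomposition of Definition \ref{stellina}. If you reduce $x^\beta = x^\alpha\ast_J x^\delta$ using $x^\delta f_\alpha\in V_\ell$, every new monomial has the form $x^{\delta+\gamma}$ with $x^\gamma\in\cN(J)$; if $x^{\delta+\gamma}\in J$, Lemma \ref{descLex} gives $x^{\delta+\gamma}=x^{\alpha'}\ast_J x^{\delta'}$ with $x^{\delta'}<_{\mathtt{Lex}}x^\delta$, so the multiplier of any subsequent $V_\ell$-reduction strictly drops in $\mathtt{Lex}$; since only finitely many monomials of degree $\leq\ell$ occur as multipliers, the process terminates. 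Note also that you located the ``technical heart'' in the wrong place: the fact that a $V_\ell$-irreducible polynomial has support in $\cN(J)$ is the easy part (every $x^\beta\in J_\ell$ admits the decomposition $x^\beta=x^\alpha\ast_J x^\delta$ with $\max(x^\delta)\leq\min(x^\alpha)$, so $x^\delta f_\alpha\in V_\ell$ reduces it), and your second paragraph handles it correctly; the delicate point is precisely the Noetherianity you attempted in the first paragraph.
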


The Noetherianity of the reduction relation $\stackrel{V_\ell}\longrightarrow$ provides an algorithm that reduces every homogeneous polynomial of degree $\ell$ to a $J$-reduced form modulo $(G)$ in a finite number of steps. We note that on the one hand it is convenient to substitute the polynomials in $V_\ell$ by their $J$-reduced normal forms for an efficient implementation of a reduction algorithm, but, on the other hand, in the proofs it is convenient to use the polynomials of $V_\ell$ as constructed in Definition \ref{def:$V_m$}.

\subsection{\texorpdfstring{Order on $W_\ell$}{Order of W\_l}}

Using the Noetherianity of the reduction relation $\stackrel{V_\ell}\longrightarrow$, we can recognize when a $J$-marked set is a $J$-marked basis by a Buchberger-like criterion (see \cite[Theorem 3.12]{CR}). To this aim we need to set an order on the set $W_\ell$.

The order that we are going to define on $W_\ell$ in Definition \ref{order2} is based on the following Definition and Lemma that are inspired by \cite{EK} and \cite[Lemma 2.11]{MS}.

\begin{definition}\label{stellina}
Given a strongly stable monomial ideal $J$ in $S$, with monomial basis $B_J$, and a monomial $x^\gamma\in J$, we define
\[
x^\gamma=x^\alpha \ast_J x^\eta, \quad \text{ with } \gamma=\alpha+\eta, \ x^\alpha \in B_{J}\text{ and }\min(x^\alpha)\geq \max (x^\eta).
\]
This decomposition exists and is unique (see \cite[Lemma 1.1]{EK}).
\end{definition}

\begin{lemma}\label{descLex}
Let $J$ be a strongly stable ideal. If $x^\epsilon$ belongs to $\cN(J)$ and $x^{\epsilon}\cdot x^\delta=x^{\epsilon+\delta}$ belongs to $J$ for some $x^\delta$, then $x^{\epsilon+\delta}=x^{\alpha}\ast_J x^{\eta}$ with $x^{\eta}<_{\mathtt{Lex}}x^\delta$.  Furthermore:
\begin{enumerate}[(i)]
\item if $\vert \delta\vert=\vert\eta\vert$, then $x^{\eta}<_{B}x^\delta$; and
\item $x^{\underline{\eta}}<_{\mathtt{Lex}}x^{\underline{\delta}}$.
\end{enumerate}
\end{lemma}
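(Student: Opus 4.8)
The plan is to unwind the definition of the $\ast_J$-decomposition and combine it with the strongly stable hypothesis. Write $x^{\epsilon+\delta}=x^\alpha \ast_J x^\eta$, so that $\epsilon+\delta=\alpha+\eta$ with $x^\alpha\in B_J$ and $\min(x^\alpha)\geq\max(x^\eta)$. The first thing I would establish is the key inequality $x^\eta<_{\mathtt{Lex}}x^\delta$. The idea is to compare the two expressions $\epsilon+\delta$ and $\alpha+\eta$ for the same exponent vector from the top variable down. Since $x^\epsilon\in\cN(J)$ but $x^\alpha\in J$ (indeed $x^\alpha\in B_J$), the monomial $x^\epsilon$ cannot be divisible by $x^\alpha$; equivalently, there is some index $i$ with $\epsilon_i<\alpha_i$, and we may take $i$ maximal with this property. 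For all $k>i$ we have $\epsilon_k\geq\alpha_k$, hence $\delta_k\leq\eta_k$; but also $\max(x^\eta)\leq x_i$ would force $\eta_k=0$ for $k>i$, and I would argue that in fact the relevant coordinates of $\delta$ and $\eta$ agree above $i$ while at position $i$ we get $\delta_i>\eta_i$ (using $\epsilon_i<\alpha_i$ together with $\min(x^\alpha)\geq\max(x^\eta)$, which pins down where the "mass" of $\eta$ can sit). Reading $\delta-\eta$ from the top, the last nonzero entry is positive, which is exactly $x^\eta<_{\mathtt{Lex}}x^\delta$.

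For part (i), if $\vert\delta\vert=\vert\eta\vert$ then $x^\eta$ and $x^\delta$ are monomials of the same degree, and for monomials of equal degree the lexicographic order refines (is implied by) nothing automatically — but here I would use that $x^\eta<_{\mathtt{Lex}}x^\delta$ together with equal degree forces $x^\eta<_B x^\delta$: on a fixed degree the Borel order and the lex order are related, and more precisely one can pass from $x^\delta$ down to $x^\eta$ by decreasing elementary moves $\mathrm{e}^-_{h,l}$. Concretely, since $\vert\eta\vert=\vert\delta\vert$ and the two vectors differ with the lex-leading difference being a deficit in $\eta$, one shows by induction that $x^\eta$ is obtained from $x^\delta$ by a sequence of moves that each lower a variable index, which is the definition of $x^\eta<_B x^\delta$. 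Here one should be careful that equal total degree is genuinely needed, since $>_B$ only compares monomials of the same degree.

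For part (ii), the point is to control what happens to the $x_0$-exponent. Setting $x_0=1$ replaces $\delta$ by $\underline\delta=(0,\delta_1,\dots,\delta_n)$ and similarly for $\eta$. Since $x^\eta<_{\mathtt{Lex}}x^\delta$ and the lexicographic comparison in our convention looks at the last nonzero coordinate of $\delta-\eta$ — i.e. the highest-index variable — dropping the $x_0$ coordinate does not affect the comparison unless $\delta$ and $\eta$ agree in all coordinates $x_1,\dots,x_n$ and differ only in $x_0$. I would rule this case out: if $\delta_k=\eta_k$ for all $k\geq 1$, then from $\epsilon+\delta=\alpha+\eta$ we get $\epsilon_k\geq\alpha_k$ for all $k\geq1$ wherever... more carefully, $\epsilon_k-\alpha_k=\eta_k-\delta_k=0$ for $k\geq 1$, so $x^\alpha$ and $x^\epsilon$ agree above $x_0$; combined with $\min(x^\alpha)\geq\max(x^\eta)$ and $x^\alpha\in B_J$, a short argument using strong stability (lowering the $x_0$-part of $x^\alpha$ keeps us in $J$, forcing $x^\epsilon\in J$, contradiction) gives the claim. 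Hence $x^{\underline\eta}<_{\mathtt{Lex}}x^{\underline\delta}$.

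The main obstacle I anticipate is the bookkeeping in the first step: correctly locating the top index $i$ where $\epsilon_i<\alpha_i$ and showing that above $i$ the vectors $\delta$ and $\eta$ coincide, so that the lex comparison is decided at or below position $i$ in favor of $\delta$. This is where the constraint $\min(x^\alpha)\geq\max(x^\eta)$ does the real work — it confines $\eta$ to the low-index variables — and getting the interaction of that constraint with the membership $x^\epsilon\in\cN(J)$ exactly right (rather than off by one index or sign) is the delicate part. Once that inequality is in hand, parts (i) and (ii) are comparatively routine refinements of it.
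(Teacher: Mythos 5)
Your argument for the main inequality $x^\eta<_{\mathtt{Lex}}x^\delta$ is correct, and it is genuinely different from the paper's. You compare coordinates at the maximal index $i$ with $\epsilon_i<\alpha_i$: since $\alpha_i>\epsilon_i\geq 0$ we get $x_i\mid x^\alpha$, hence $x_i\geq\min(x^\alpha)\geq\max(x^\eta)$, so $\eta_k=0$ (and then $\delta_k=0$, $\epsilon_k=\alpha_k$) for $k>i$, while $\delta_i-\eta_i=\alpha_i-\epsilon_i>0$; this is exactly the paper's lex condition. The paper instead divides out $\gcd(x^\delta,x^\eta)$ and proves the stronger separation $\min(x^\delta)>\max(x^\eta)$ for the coprime parts. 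That stronger fact, not the bare lex inequality, is what drives parts (i) and (ii), and this is where your proposal has real gaps.

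For (i), the principle you invoke --- equal degree together with $x^\eta<_{\mathtt{Lex}}x^\delta$ forces $x^\eta<_{B}x^\delta$ --- is false: $x_1^2<_{\mathtt{Lex}}x_0x_2$ in the paper's convention, yet $x_1^2$ and $x_0x_2$ are $>_B$-incomparable, because increasing moves starting from $x_1^2$ never produce the variable $x_0$ (lex refines the Borel order, not conversely). The facts you actually established (agreement above $i$ and $\delta_i>\eta_i$) do not imply Borel comparability either, so the induction you sketch cannot be carried out; one needs the paper's separation statement, which says that every variable dividing the $\delta$-part dominates every variable dividing the $\eta$-part, whence with $\vert\delta\vert=\vert\eta\vert$ a chain of increasing moves from $x^\eta$ to $x^\delta$ is immediate (and comparability is preserved when multiplying back by the gcd). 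For (ii), your dichotomy is fine (the comparison survives deleting the $x_0$-coordinate unless $\delta-\eta$ is concentrated in $x_0$, i.e.\ your index $i$ equals $0$), but the exclusion step fails: ``lowering the $x_0$-part of $x^\alpha$ keeps us in $J$'' is precisely saturatedness, and it is false for general strongly stable ideals, e.g.\ $x_0^2\in(x_1,x_0^2)$ while $x_0\notin(x_1,x_0^2)$. In fact the case you are trying to exclude can occur: for $J=(x_1,x_0^2)\subset K[x_0,x_1]$, $x^\epsilon=x^\delta=x_0$ gives $x^{\epsilon+\delta}=x_0^2=x_0^2\ast_J 1$, so $x^{\underline\eta}=x^{\underline\delta}=1$ and (ii) as literally stated fails; the paper's own proof glosses over this ($x^\eta=1$ is declared obvious), and in all its applications of (ii) the ideal is the saturated $\underline J$, where $x_0\nmid x^\alpha$ and your degenerate case $\epsilon_k=\alpha_k$ for $k\geq 1$ directly yields $x^\alpha\mid x^\epsilon$, contradicting $x^\epsilon\in\cN(J)$ without any appeal to strong stability. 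So the main claim stands as you prove it, but (i) needs the gcd/min--max argument (or an equivalent strengthening), and (ii) needs the saturation hypothesis (or that corrected exclusion), not the step you propose.
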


\begin{proof}
We can assume that $x^{\delta}$ and $x^{\eta}$ are coprime; indeed, if this is not the case, we can divide the involved equalities of monomials by  $\gcd(x^{\delta},x^{\eta})$. If $x^\eta=1$, all the statements are obvious. If $x^\eta\neq 1$, then $\min(x^\delta) \vert x^\alpha$ because $x^{\delta}$ and $ x^{\eta}$ are coprime, hence $ \min(x^\delta) \geq \min(x^\alpha)\geq \max(x^\eta)$ and so $\min(x^\delta)> \max(x^\eta)$ because they cannot coincide. This inequality implies both $x^{\eta}<_{\mathtt{Lex}}x^\delta$ and $x^{\underline{\eta}}<_{\mathtt{Lex}}x^{\underline{\delta}}$. Moreover, if $\vert \delta\vert=\vert\eta\vert$, this is also sufficient to conclude that $x^{\eta}<_{B}x^\delta$.
\end{proof}

\begin{remark} \label{rm:V}
Observe that if $g_\beta=x^\delta f_\alpha$ belongs to $V_\ell$, then $x^\beta=x^\alpha\ast_J x^\delta$.
\end{remark}

\begin{definition}\label{order2}
Let $\geq$ be any order on $G$  and $x^\delta f_\alpha$, $x^{\delta'}f_{\alpha'}$ be two elements of $W_\ell$. We set
\[
x^\delta f_\alpha \succeq_\ell x^{\delta'}f_{\alpha'} \Leftrightarrow
x^\delta >_{\tt{Lex}} x^{\delta'} \text{ or } x^\delta = x^{\delta'} \text{ and } f_\alpha\geq f_{\alpha'}.
\]
\end{definition}

\begin{lemma}\label{reduction2} \
\begin{enumerate}[(i)]
\item \label{reduction2-i}For every two elements $x^\delta f_\alpha$, $x^{\delta'}f_{\alpha'}$ of $W_\ell$ we get 
\[
x^\delta f_\alpha \succeq_\ell x^{\delta'}f_{\alpha'} \Rightarrow \forall x^\eta :\ \  x^{\delta+\eta} {f_\alpha} \succeq_{\ell'} x^{\delta'+\eta}f_{\alpha'},
\]
 where $\ell'=\vert \delta +\eta + \alpha\vert$.
\item \label{reduction2-ii}Every polynomial $g_\beta\in V_\ell$ is the minimum w.r.t. $\preceq_\ell$ of the subset $W_\beta$ of $W_\ell$ containing all polynomials of $W_\ell$ with $x^\beta$ as head term.
\item \label{reduction2-iii}If $x^\delta f_\alpha$ belongs to $W_\ell\setminus G_\ell \text{ and } x^\beta$ belongs to $\Supp(x^\delta T(f_\alpha)) \text{ with } \ g_\beta\in V_\ell$, then $x^\delta f_{\alpha} \succ_\ell g_\beta$.
\end{enumerate}
\end{lemma}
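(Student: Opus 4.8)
The plan is to treat the three parts in order, since each leans on the previous one. For part (\ref{reduction2-i}), I would unwind Definition \ref{order2}. If $x^\delta f_\alpha \succeq_\ell x^{\delta'} f_{\alpha'}$, then either $x^\delta >_{\tt Lex} x^{\delta'}$ or $x^\delta = x^{\delta'}$ with $f_\alpha \geq f_{\alpha'}$. In the first case, multiplying both multi-indices by the same $\eta$ preserves the lexicographic inequality — this is the standard fact that $>_{\tt Lex}$ is compatible with multiplication, which follows immediately from the description of $>_{\tt Lex}$ in terms of the sign of the last nonzero entry of the difference, since $(\delta+\eta)-(\delta'+\eta) = \delta - \delta'$. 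In the second case, $x^{\delta+\eta} = x^{\delta'+\eta}$ and the marked-polynomial order $f_\alpha \geq f_{\alpha'}$ is untouched, so again $x^{\delta+\eta} f_\alpha \succeq_{\ell'} x^{\delta'+\eta} f_{\alpha'}$. This part is essentially bookkeeping.

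For part (\ref{reduction2-ii}), fix $x^\beta \in J_\ell$ and consider $W_\beta$, the set of all $x^{\delta'} f_{\alpha'} \in W_\ell$ with $x^{\delta'+\alpha'} = x^\beta$. By Definition \ref{def:$V_m$} there is exactly one element $g_\beta = x^\delta f_\alpha \in V_\ell$ with this head term, and by Remark \ref{rm:V} it corresponds to the decomposition $x^\beta = x^\alpha \ast_J x^\delta$, i.e. $x^\alpha \in B_J$ and $\max(x^\delta) \leq \min(x^\alpha)$. Given any other $x^{\delta'} f_{\alpha'} \in W_\beta$, I must show $x^{\delta'} >_{\tt Lex} x^\delta$, or $x^{\delta'} = x^\delta$ with $f_{\alpha'} \geq f_\alpha$ (the latter forces $f_{\alpha'} = f_\alpha$ since equal head terms $x^{\alpha'} = x^{\alpha}$ in a marked set give the same polynomial). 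The key point is to compare $\delta$ with $\delta'$: here $x^\beta = x^{\alpha'} x^{\delta'}$ with $x^{\alpha'} \in B_J$, while $x^\beta = x^\alpha \ast_J x^\delta$ is the \emph{distinguished} Eliahou–Kervaire decomposition. If $x^{\alpha'} = x^\alpha$ then $x^{\delta'} = x^\delta$ and we are in the equality case. If $x^{\alpha'} \neq x^\alpha$, then I claim $x^{\delta'} >_{\tt Lex} x^\delta$: indeed writing $x^{\delta'}$ as a product, one of its variables must be $\leq \min(x^\alpha)$ or violate the $\ast_J$-minimality of the pair $(\alpha,\delta)$, and a short argument using Lemma \ref{descLex} (applied with the roles suitably chosen, comparing the "multiplier part" of a non-distinguished decomposition against the distinguished one) yields the strict lexicographic inequality. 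Concretely, $x^{\delta'}$ cannot satisfy $\max(x^{\delta'}) \leq \min(x^{\alpha'})$ unless it equals the distinguished decomposition, and failing that condition pushes $x^{\delta'}$ lex-up relative to $x^\delta$. So $g_\beta$ is the $\preceq_\ell$-minimum of $W_\beta$.

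For part (\ref{reduction2-iii}), suppose $x^\delta f_\alpha \in W_\ell \setminus G_\ell$, so $x^\delta \neq 1$, and let $x^\beta \in \Supp(x^\delta T(f_\alpha))$ with $g_\beta \in V_\ell$ — this means $x^\beta = x^\delta x^\epsilon$ for some $x^\epsilon \in \Supp(T(f_\alpha)) \subseteq \cN(J)$, and $x^\beta \in J_\ell$. Write $g_\beta = x^{\delta''} f_{\alpha''}$ with $x^\beta = x^{\alpha''} \ast_J x^{\delta''}$. Applying Lemma \ref{descLex} with $x^\epsilon \in \cN(J)$ and $x^\epsilon \cdot x^\delta \in J$, we get $x^\beta = x^{\alpha''} \ast_J x^{\delta''}$ with $x^{\delta''} <_{\tt Lex} x^\delta$. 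Hence, by Definition \ref{order2}, $x^\delta f_\alpha \succ_\ell x^{\delta''} f_{\alpha''} = g_\beta$ (strictly, because the lexicographic comparison of the multipliers is strict). This is exactly the statement, and it is the engine behind the termination of $\stackrel{V_\ell}\longrightarrow$: reductions strictly decrease the $\succ_\ell$-maximal term.

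I expect the main obstacle to be part (\ref{reduction2-ii}), specifically the verification that a non-distinguished factorization $x^\beta = x^{\alpha'} x^{\delta'}$ (with $x^{\alpha'} \in B_J$ but the Borel/$\min$-$\max$ condition possibly failing) necessarily has $x^{\delta'} >_{\tt Lex} x^\delta$ where $x^\delta$ is the $\ast_J$-multiplier. The cleanest route is probably to argue directly from uniqueness of the $\ast_J$-decomposition (Definition \ref{stellina}, \cite[Lemma 1.1]{EK}) together with Lemma \ref{descLex}: if $x^{\alpha'} \neq x^\alpha$, then $x^{\delta'}$ fails $\max(x^{\delta'}) \leq \min(x^{\alpha'})$, and one extracts from $x^{\delta'}$ a submonomial playing the role of $x^\delta$ in Lemma \ref{descLex}$(i)$/$(ii)$ to force the lex inequality; alternatively, one can invoke directly the inductive structure of the Eliahou–Kervaire decomposition. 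Parts (\ref{reduction2-i}) and (\ref{reduction2-iii}) are short once Lemma \ref{descLex} is in hand.
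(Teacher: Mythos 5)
Parts (i) and (iii) of your proposal are fine and coincide with the paper's argument: (i) is the compatibility of $>_{\mathtt{Lex}}$ with multiplication, and (iii) is exactly the intended application of Lemma \ref{descLex}, since there the cofactor $x^\epsilon\in\Supp(T(f_\alpha))$ lies in $\cN(J)$. The genuine gap is in part (ii), and you flag it yourself: the central claim --- that for any factorization $x^\beta=x^{\alpha'}\cdot x^{\delta'}$ with $x^{\alpha'}\in B_J$ different from the distinguished one $x^\beta=x^{\alpha}\ast_J x^{\delta}$ one has $x^{\delta'}>_{\mathtt{Lex}}x^{\delta}$ --- is asserted but never proved. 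Lemma \ref{descLex} cannot be invoked here, because its hypothesis is that the cofactor of the multiplier lies in $\cN(J)$, while in (ii) the cofactor $x^{\alpha'}$ is a minimal generator of $J$. The natural repair you hint at (``extract a submonomial'') does not work as stated: replacing $x^{\alpha'}$ by the proper divisor $x^{\alpha'}/x_i\in\cN(J)$ for a variable $x_i\mid x^{\alpha'}$ lets you apply Lemma \ref{descLex} to $x^\beta=(x^{\alpha'}/x_i)\cdot(x_i x^{\delta'})$, but this only gives $x^{\delta}<_{\mathtt{Lex}}x_i x^{\delta'}$, which is strictly weaker than $x^{\delta}<_{\mathtt{Lex}}x^{\delta'}$ (the lexicographic comparison may be decided precisely at $x_i$). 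So as written, (ii) is not proved, and it is the statement that actually carries the lemma.

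The paper closes this step with a different, elementary argument that you could adopt. Dividing by $\gcd(x^{\delta},x^{\delta'})$, one may assume $x^{\delta}$ and $x^{\delta'}$ are coprime; they are then both nontrivial, because $x^{\delta'}=1$ (resp.\ $x^{\delta}=1$) would force $x^{\alpha}\mid x^{\alpha'}$ (resp.\ $x^{\alpha'}\mid x^{\alpha}$), impossible for two distinct elements of $B_J$. From $x^{\alpha}x^{\delta}=x^{\alpha'}x^{\delta'}$ and coprimality one gets $x^{\delta'}\mid x^{\alpha}$, hence $\max(x^{\delta'})\geq\min(x^{\alpha})\geq\max(x^{\delta})$, the last inequality being the distinguished-decomposition condition. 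Equality throughout would make the variable $\min(x^{\alpha})$ divide both $x^{\delta}$ and $x^{\delta'}$, contradicting coprimality; therefore $\max(x^{\delta'})>\max(x^{\delta})$, which immediately yields $x^{\delta'}>_{\mathtt{Lex}}x^{\delta}$ and hence $x^{\delta'}f_{\alpha'}\succ_\ell g_\beta$. If instead you want to keep an argument in the spirit of Lemma \ref{descLex}/Eliahou--Kervaire induction, you would have to formulate and prove the corresponding statement for cofactors in $B_J$ separately --- it is not a formal consequence of Lemma \ref{descLex} as stated.
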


\begin{proof} \
\begin{enumerate}[(i)]
\item  This follows by the analogous property of the term order $>_{\tt{Lex}}$.
\item Let $g_\beta=x^{\delta'}f_{\alpha'}$ be the polynomial of $V$ such that $x^\beta=x^{\alpha'}\ast_J x^{\delta'}$ and $x^\delta f_\alpha$ be another polynomial of $W_{\beta}$. We can assume that $x^{\delta}$ and $x^{\delta'}$ are coprime; otherwise, we can divide the involved inequalities of monomials by  $\gcd(x^{\delta},x^{\delta'})$. By Remark \ref{rm:V} and Definition \ref{stellina}, we have that $\max(x^{\delta'})\leq \min(x^{\alpha'})$ and $\max(x^{\delta})> \min(x^{\alpha})$. Then, we get $\max(x^\delta) > \max(x^{\delta'})$ because $x^{\alpha'}\nmid x^\alpha$ and $x^{\alpha}\nmid x^{\alpha'}$. Thus, $x^\delta >_{\mathtt{Lex}} x^{\delta'}$.
\item If $x^\beta$ belongs to $B_J$ we are done. Otherwise, let $x^\beta=x^{\alpha'}\ast_J x^{\delta'}$ and note that every monomial of $\Supp(x^\delta f_\alpha)$ is a multiple of $x^\delta$, in particular $x^{\beta}=x^{\delta+\gamma}$ for some $x^\gamma \in \cN(J)$. By Lemma \ref{descLex}, we get $x^{\delta'}<_{\tt{Lex}}x^\delta$. \qedhere
\end{enumerate}
\end{proof}

\begin{remark}
We point out that the order defined in \cite[Definition 3.9]{CR} does not satisfy the conditions listed in Lemma \ref{reduction2} and in \cite[Lemma 3.10]{CR}. These conditions have a crucial role  in the proof of \cite[Theorem 3.12]{CR} and for this reason it has been a mistake to use the order of \cite[Definition 3.9]{CR} in that Theorem. So, here we replace such order by that defined in new Definition \ref{order2}. Aside the order, the original reduction and Buchberger criterion are the same, as we will state in Theorem \ref{BuchCrit1}, (\ref{buchcrit1-i}) and (\ref{buchcrit1-ii}). Also, we give an improvement by Theorem \ref{BuchCrit1}, (\ref{buchcrit1-iii}) and by Corollary \ref{sizEK}. Moreover, we observe that for the same reason the results about syzygies of the ideal $I$ generated by a $J$-marked basis proposed in \cite[Section 3]{CR} hold by using the order on $W_\ell$ of Definition \ref{order2} and do not hold by using the order of \cite[Definition 3.9]{CR}.
\end{remark}

\subsection{\texorpdfstring{Improved Buchberger-like criterion for $J$-marked bases}{Improved Buchberger-like criterion for J-marked bases}}
\begin{definition}
The {\em S-polynomial} of two elements $f_\alpha$, $f_{\alpha'}$ of a $J$-marked set $G$ is the polynomial $S(f_\alpha, f_{\alpha'}):=x^\gamma f_\alpha-x^{\gamma'}f_{\alpha'}$, where $x^{\gamma+\alpha}=x^{\gamma'+\alpha'}=lcm(x^\alpha,x^{\alpha'})$.
\end{definition}

\begin{theorem}{\rm (Buchberger-like criterion)}\label{BuchCrit1}
Let $J$ be a strongly stable ideal and $I$ the homogeneous ideal generated by a $J$-marked set $G$. With the above notation, TFAE:
\begin{enumerate}[(i)]
\item \label{buchcrit1-i}$I\in \Mf(J)$;
\item \label{buchcrit1-ii}$ \forall f_\alpha, f_{\alpha'} \in G,\  S(f_\alpha, f_{\alpha'}) \stackrel{V_\ell}\longrightarrow 0$;
\item \label{buchcrit1-iii}$ \forall f_\alpha, f_{\alpha'} \in G,\  S(f_\alpha, f_{\alpha'})=x^\gamma f_\alpha-x^{\gamma'}f_{\alpha'} =\sum a_j x^{\eta_j}f_{\alpha_j}$, with $x^{\eta_j}<_{\tt{Lex}}\max_{\tt{Lex}}\{x^\gamma,x^{\gamma'}\}$ and $x^{\eta_j}f_{\alpha_j}\in V_\ell$.
\end{enumerate}
\end{theorem}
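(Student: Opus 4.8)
The plan is to prove the chain of equivalences (\ref{buchcrit1-i}) $\Rightarrow$ (\ref{buchcrit1-iii}) $\Rightarrow$ (\ref{buchcrit1-ii}) $\Rightarrow$ (\ref{buchcrit1-i}), using the machinery already set up. The implication (\ref{buchcrit1-ii}) $\Rightarrow$ (\ref{buchcrit1-i}) is essentially the statement of \cite[Theorem 3.12]{CR}, now reproved with the corrected order of Definition \ref{order2}: assuming each $S$-polynomial reduces to $0$ via $\stackrel{V_\ell}\longrightarrow$, one shows that $\langle W\rangle = \langle V\rangle$ and invokes Lemma \ref{forseteo}. Concretely, I would show by induction on $\ell\geq m_J$ that $\langle W_\ell\rangle = \langle V_\ell\rangle$: any $x^\delta f_\alpha\in W_\ell\setminus V_\ell$ has $\max(x^\delta)>\min(x^\alpha)$, so writing $x^\delta = x^{\delta''}x_i$ with $x_i=\max(x^\delta)$ and picking $x_j\le\min(x^\alpha)$ dividing $x^\alpha$, one forms an $S$-polynomial-type relation that rewrites $x^\delta f_\alpha$ modulo $\langle V\rangle$ and lower terms in the $\succeq_\ell$-order; Noetherianity of $\stackrel{V_\ell}\longrightarrow$ (Proposition \ref{construction of $J$-normal form}) together with Lemma \ref{reduction2} guarantees the process terminates. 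The key point, where the corrected order is needed, is Lemma \ref{reduction2}(\ref{reduction2-iii}): reducing a monomial in $\Supp(x^\delta T(f_\alpha))$ against $V_\ell$ strictly decreases the head term in $>_{\tt{Lex}}$ (equivalently in $\succ_\ell$), so the rewriting is well-founded.

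For (\ref{buchcrit1-i}) $\Rightarrow$ (\ref{buchcrit1-iii}): if $I\in\Mf(J)$ then by Proposition \ref{cor1} every $J$-reduced polynomial in $I$ is zero, and in particular $S(f_\alpha,f_{\alpha'})$ has $J$-normal form $0$. By Proposition \ref{construction of $J$-normal form} and Lemma \ref{forseteo}, $S(f_\alpha,f_{\alpha'})\in\langle V_\ell\rangle$ where $\ell = \vert\gamma+\alpha\vert$, so we may write $S(f_\alpha,f_{\alpha'}) = \sum a_j x^{\eta_j}f_{\alpha_j}$ with $x^{\eta_j}f_{\alpha_j}\in V_\ell$. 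What remains is the degree/order constraint $x^{\eta_j}<_{\tt{Lex}}\max_{\tt{Lex}}\{x^\gamma,x^{\gamma'}\}$. Here the idea is that $x^\gamma f_\alpha$ and $x^{\gamma'}f_{\alpha'}$ are themselves elements of $W_\ell$ both of whose head terms are $\le_{\tt{Lex}}$-comparable to $\max_{\tt{Lex}}\{x^\gamma,x^{\gamma'}\}\cdot(\text{something})$; more precisely every monomial occurring in $S(f_\alpha,f_{\alpha'})$ other than possibly the leading cancellation is, by the structure of the $S$-polynomial and Lemma \ref{descLex}, strictly below $\max_{\tt{Lex}}\{x^\gamma,x^{\gamma'}\}$ after factoring through the $\ast_J$-decomposition, which forces the multipliers $x^{\eta_j}$ appearing in a $V_\ell$-representation to satisfy the stated bound by Lemma \ref{reduction2}(\ref{reduction2-ii})--(\ref{reduction2-iii}). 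I would make this precise by running the reduction of $S(f_\alpha,f_{\alpha'})$ step by step and tracking head terms.

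The implication (\ref{buchcrit1-iii}) $\Rightarrow$ (\ref{buchcrit1-ii}) should be the easiest: a representation $S(f_\alpha,f_{\alpha'}) = \sum a_j x^{\eta_j}f_{\alpha_j}$ with all $x^{\eta_j}f_{\alpha_j}\in V_\ell$ and the head terms controlled is, by Lemma \ref{reduction2}, exactly a standard representation in the sense of Gr\"obner theory, so the usual argument (Buchberger's first criterion / the translation between standard representations and reductions to zero) shows $S(f_\alpha,f_{\alpha'})\stackrel{V_\ell}\longrightarrow 0$; the $\succ_\ell$-decrease at each reduction step is furnished by Lemma \ref{reduction2}(\ref{reduction2-iii}) and Noetherianity by Proposition \ref{construction of $J$-normal form}.

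The main obstacle I anticipate is the bookkeeping in (\ref{buchcrit1-i}) $\Rightarrow$ (\ref{buchcrit1-iii}), namely pinning down the strict inequality $x^{\eta_j}<_{\tt{Lex}}\max_{\tt{Lex}}\{x^\gamma,x^{\gamma'}\}$ from the mere existence of a $V_\ell$-representation. One has to argue that the ``bad'' multipliers (those violating the bound) can be eliminated, which amounts to showing that the sub-$K$-vector-space of $\langle V_\ell\rangle$ spanned by the $x^\eta f_{\alpha_j}$ with $x^\eta$ exceeding the bound does not meet the span of the lower ones in a way that obstructs rewriting $S(f_\alpha,f_{\alpha'})$; this is where Lemma \ref{reduction2}(\ref{reduction2-ii}), identifying each $g_\beta\in V_\ell$ as the $\preceq_\ell$-minimum of $W_\beta$, does the real work, combined with a descending induction on the $\succ_\ell$-largest head term appearing. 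The rest is a careful but routine translation between the language of reductions $\stackrel{V_\ell}\longrightarrow$ and that of $K$-linear representations over $V_\ell$.
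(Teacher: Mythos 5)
Your proposal is correct, but it closes the cycle of implications along a genuinely different route than the paper, and the difference matters downstream. The paper treats (\ref{buchcrit1-i})$\Leftrightarrow$(\ref{buchcrit1-ii}) exactly as you do, by invoking the proof of \cite[Theorem 3.12]{CR} with the corrected order of Definition \ref{order2}, and its step (\ref{buchcrit1-ii})$\Rightarrow$(\ref{buchcrit1-iii}) is in substance your (\ref{buchcrit1-i})$\Rightarrow$(\ref{buchcrit1-iii}): uniqueness of $J$-reduced forms forces the $V_\ell$-reduction of each $S$-polynomial to end at $0$, and the $\mathtt{Lex}$ bound on the multipliers comes from tracking the reduction with Lemma \ref{descLex} and Lemma \ref{reduction2}(\ref{reduction2-iii}). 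Where you diverge is the return to (\ref{buchcrit1-i}): the paper proves (\ref{buchcrit1-iii})$\Rightarrow$(\ref{buchcrit1-i}) directly, by induction on the multipliers $x^\eta$ ordered by $\mathtt{Lex}$, showing $x^\eta\cdot V\subseteq\langle V\rangle$ and, in the crucial case $x^\eta=x_i$, using an Eliahou--Kervaire $S$-polynomial $x_if_\alpha-x^{\eta'}f_{\alpha'}$ with $x^{\eta'}<_{\mathtt{Lex}}x_i$; you instead prove (\ref{buchcrit1-iii})$\Rightarrow$(\ref{buchcrit1-ii}) and then lean again on the CR implication (\ref{buchcrit1-ii})$\Rightarrow$(\ref{buchcrit1-i}). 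Your (\ref{buchcrit1-iii})$\Rightarrow$(\ref{buchcrit1-ii}) does work, but not by the generic ``standard representation gives reduction to zero'' principle, which is false for a single polynomial in ordinary Gr\"obner theory; it works here because distinct elements of $V_\ell$ have distinct head terms (uniqueness of the $\ast_J$-decomposition, Remark \ref{rm:V}) and, by Lemma \ref{reduction2}(\ref{reduction2-ii})--(\ref{reduction2-iii}), the head term of the $\succeq_\ell$-maximal summand cannot be cancelled by the tails of the others, so it survives in the support and the summands can be subtracted one at a time---note that the $\mathtt{Lex}$ bound in (\ref{buchcrit1-iii}) is not even needed for this step. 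The trade-off: your arrangement is shorter and concentrates all the weight on the CR equivalence, whereas the paper's direct proof of (\ref{buchcrit1-iii})$\Rightarrow$(\ref{buchcrit1-i}) is precisely what makes Corollary \ref{sizEK} immediate (that induction only ever consumes EK-pairs) and serves as the template for Theorem \ref{nostrobuch} and Theorem \ref{paolo}; with your route, restricting condition (\ref{buchcrit1-iii}) to EK-pairs would only yield that EK $S$-polynomials reduce to zero, and no statement in \cite{CR} converts that weakened (\ref{buchcrit1-ii}) into (\ref{buchcrit1-i}).
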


\begin{proof}
For the equivalence between (\ref{buchcrit1-i}) and (\ref{buchcrit1-ii}), we refer to the proof of \cite[Theorem 3.12]{CR} by using Definition \ref{order2} instead of \cite[Definition 3.9]{CR}.

Statement (\ref{buchcrit1-ii}) implies (\ref{buchcrit1-iii}) by the definition of the reduction relation $\stackrel{V_\ell}\longrightarrow$ and by Lemma \ref{reduction2} (\ref{reduction2-iii}).
It remains to prove that statement (\ref{buchcrit1-iii}) implies (\ref{buchcrit1-i}).

We want to prove that $I=\langle V \rangle$ or, equivalently by Lemma \ref{forseteo}, that $\langle V\rangle=\langle W\rangle$.  It is sufficient to prove that $x^\eta\cdot V\subseteq \langle V\rangle$, for every monomial  $x^\eta$. We proceed by induction on the monomials $x^\eta$, ordered according to $\tt{Lex}$. The thesis is obviously true for $x^\eta=1$. We then assume that the thesis holds for any monomial $x^{\eta'}$ such that $x^{\eta'}<_{\tt{Lex}}x^\eta$.

If $\vert\eta\vert> 1$, we can consider any product $x^{\eta}=x^{\eta_1}\cdot x^{\eta_2}$, $x^{\eta_1}$ and $x^{\eta_2}$ non-constant. Since $x^{\eta_i}<_{\texttt{Lex}}x^\eta, i=1,2$, we immediately obtain by induction
\[x^\eta\cdot V= x^{\eta_1}\cdot (x^{\eta_2}\cdot V)\subseteq x^{\eta_1}\langle V\rangle\subseteq\langle V\rangle.\]

If $\vert \eta\vert=1$, then we need to prove that $x_i\cdot V\subseteq\langle V\rangle$. Since $x_0V\subseteq  V$, it is then sufficient to prove the thesis for $x^\eta=x_i$, assuming that the thesis holds for every $x^{\eta'}<_{\tt{Lex}}x_i$. We consider $g_\beta=x^\delta f_\alpha \in V$, where $\max(x^\delta)\leq \min(x^\alpha)$. If $x_ig_\beta$ does not belong to $V$, then $\max(x_i\cdot x^\delta)>\min(x^\alpha)$, so $x_i>\min(x^\alpha)$. In particular, $x_i>\min(x^\alpha)\geq \max(x^\delta)$, so $x_i>_\texttt{Lex}x^\delta$: by induction, it is now sufficient to prove the thesis for $x_i f_\alpha$.

We consider an $S$-polynomial $S(f_\alpha,f_{\alpha'})=x_i f_\alpha-x^\gamma f_{\alpha'}$ such that $x^\gamma<_\texttt{Lex} x_i$. Such $S$-polynomial always exists: for instance, we can consider $x_ix^\alpha=x^{\alpha'}\ast_J x^{\eta'}$.
By the hypothesis $x_i f_\alpha-x^{\eta'}f_{\alpha'} =\sum a_j x^{{\eta'}_j}f_{\alpha_j}$ where $x^{\eta'}f_{\alpha'},  x^{{\eta'}_j}f_{\alpha_j}\in  V_\ell$  and then $x_i f_\alpha$ belongs to $\langle V\rangle$.
\end{proof}

For any strongly stable ideal $J$, with monomial basis $B_{J}=\{x^{\alpha_1},\dots,x^{\alpha_r}\}$, we can consider the set of syzygies of the following kind
\begin{equation*}
x_je_{\alpha_i}-x^\eta e_{\alpha_k}, \ \text{ with } x_j>\min(x^{\alpha_i}) \ \text{ and } \ x_jx^{\alpha_i}=x^{\alpha_k}\ast_{J}x^\eta.
\end{equation*}
This set of syzygies is actually a minimal set of generators for the first module of syzygies of $J$; this is due to Eliahou and Kervaire (see \cite{EK} and \cite[Theorem 1.31]{Gr}).

\begin{definition}
We call \emph{Eliahou-Kervaire couple} of the $J$-marked set $G$ any couple of polynomials $f_\alpha, f_\beta$, $\Ht(f_\alpha)=x^\alpha$, $\Ht(f_\beta)=x^\beta$, such that
\[
x_jx^\alpha=x^\beta\ast_Jx^\eta \text{ for some } x_j>\min(x^\alpha).
\]
We call \emph{Eliahou-Kervaire $S$-polynomial  } (EK-polynomial, for short) of $G$ an $S$-polynomial among an Eliahou-Kervaire couple of polynomials $f_\alpha$ and $f_\beta$. We denote such $S$-polynomial by $S^{EK}(f_\alpha,f_\beta)$. Observe that, thanks to the definition, an EK-polynomial is of kind
\[
S^{EK}(f_\alpha,f_\beta)=x_jf_\alpha-x^\eta f_\beta, \text{ for some } x_j>\min(x^\alpha)\text{, with } x_jx^\alpha=x^\beta\ast_Jx^\eta.
\]
\end{definition}

In the proof of Theorem \ref{BuchCrit1}, it is sufficient to assume that (\ref{buchcrit1-iii}) holds only for EK-polynomials, as stated in the following result.

\begin{corollary}\label{sizEK}
With the same notation of Theorem \ref{BuchCrit1},
\[
I\in \Mf(J)\Leftrightarrow \text{ for every EK-polynomial between elements of }G,\ S^{EK}(f_\alpha,f_\beta)\xrightarrow{\ V_\ell}0.
\]
\end{corollary}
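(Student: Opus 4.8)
The plan is to deduce the corollary directly from Theorem \ref{BuchCrit1}, by observing that its proof of the implication (\ref{buchcrit1-iii})$\Rightarrow$(\ref{buchcrit1-i}) already uses nothing but Eliahou-Kervaire $S$-polynomials. The implication \lq\lq$\Rightarrow$\rq\rq\ is immediate, since an EK-polynomial is a particular $S$-polynomial and so is covered by Theorem \ref{BuchCrit1} (\ref{buchcrit1-ii}). For the converse I would rerun the induction in the proof of Theorem \ref{BuchCrit1}: by Lemma \ref{forseteo} it suffices to prove $\langle V\rangle=\langle W\rangle$, and for this one shows, by induction on the monomials $x^\eta$ ordered by $\tt{Lex}$, that $x^\eta\cdot V\subseteq\langle V\rangle$. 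The steps handling $\vert\eta\vert>1$ (factor $x^\eta=x^{\eta_1}x^{\eta_2}$) and $x^\eta=x_0$ (using $x_0V\subseteq V$) are unchanged and involve no $S$-polynomial, so only the case $x^\eta=x_i$, $i\geq 1$, remains.

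In that case I would argue as in the original proof: for $g_\beta=x^\delta f_\alpha\in V$, either $x_ig_\beta\in V$ (nothing to prove), or $x_ig_\beta\notin V$, which forces $x_i>\min(x^\alpha)\geq\max(x^\delta)$, so $x^\delta<_{\tt{Lex}}x_i$, and the inductive hypothesis reduces the problem to showing $x_if_\alpha\in\langle V\rangle$ under the extra information $x_i>\min(x^\alpha)$. Here is where the EK-polynomials enter: I would take the decomposition $x_ix^\alpha=x^{\alpha'}\ast_Jx^{\eta'}$ and consider $x_if_\alpha-x^{\eta'}f_{\alpha'}$. Since $x_i>\min(x^\alpha)$, this is exactly the EK-polynomial $S^{EK}(f_\alpha,f_{\alpha'})$, and $x^{\eta'}f_{\alpha'}\in V_\ell$ because $\max(x^{\eta'})\leq\min(x^{\alpha'})$ by the definition of $\ast_J$. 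By hypothesis $S^{EK}(f_\alpha,f_{\alpha'})\stackrel{V_\ell}\longrightarrow 0$, so by the definition of $\stackrel{V_\ell}\longrightarrow$ one may write $x_if_\alpha-x^{\eta'}f_{\alpha'}=\sum_j a_jx^{\eta_j}f_{\alpha_j}$ with each $x^{\eta_j}f_{\alpha_j}\in V_\ell$; hence $x_if_\alpha\in\langle V_\ell\rangle\subseteq\langle V\rangle$, which closes the induction, and Lemma \ref{forseteo} then gives $I\in\Mf(J)$.

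I do not expect a real obstacle: the corollary is essentially the bookkeeping observation that the Buchberger test in Theorem \ref{BuchCrit1} never needs an $S$-polynomial which is not of Eliahou-Kervaire type. The only two points worth an explicit line are that the $S$-polynomial built in the proof of Theorem \ref{BuchCrit1} --- the one attached to $x_ix^\alpha=x^{\alpha'}\ast_Jx^{\eta'}$ with $x_i>\min(x^\alpha)$ --- is indeed an EK-polynomial, and that its companion term $x^{\eta'}f_{\alpha'}$ already lies in $V_\ell$; should one prefer to phrase the conclusion through the refined form (\ref{buchcrit1-iii}) instead of directly through $\stackrel{V_\ell}\longrightarrow 0$, one would additionally invoke Lemma \ref{reduction2} (\ref{reduction2-iii}) and Lemma \ref{descLex} to bound the $\tt{Lex}$-size of the multipliers $x^{\eta_j}$, but that is not needed for the argument above.
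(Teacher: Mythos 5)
Your proposal is correct and follows exactly the paper's route: the paper's proof of this corollary is precisely the remark that in the implication (\ref{buchcrit1-iii})$\Rightarrow$(\ref{buchcrit1-i}) of Theorem \ref{BuchCrit1} the only $S$-polynomial ever invoked is the one attached to $x_ix^\alpha=x^{\alpha'}\ast_J x^{\eta'}$ with $x_i>\min(x^\alpha)$, i.e.\ an EK-polynomial, which is the observation you rerun in detail. Your extra verifications (that $x^{\eta'}f_{\alpha'}\in V_\ell$ and that reduction to $0$ by $\stackrel{V_\ell}{\longrightarrow}$ yields a $K$-linear combination of elements of $V_\ell$) are exactly the points the paper's one-line proof implicitly relies on.
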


\begin{proof}
In the proof of Theorem \ref{BuchCrit1} the crucial point is the existence of an $S$-polynomial of kind $x_if_\alpha-x^\eta f_\beta$ with $x^\eta<_{\tt{Lex}}x_i$, and we used exactly an EK-polynomial.
\end{proof}

\subsection{\texorpdfstring{The scheme structure of $\Mf(J)$}{The scheme structure of Mf(J)}}\label{subsec:MfJ_V}

Now, we recall and develop some features of the affine scheme structure of $\Mf(J)$. Let $p(t)$ the Hilbert polynomial of $S/J$ and $r$ its Gotzmann number.
In the following we will denote by $\mathcal G$ the $J$-marked set:
\begin{equation}\label{JbaseC} \mathcal{G}= \left\{F_\alpha=x^\alpha-\sum C_{\alpha\gamma} x^\gamma : \Ht(F_\alpha)=x^\alpha\in B_J,\  x^\gamma \in \cN(J)_{\vert\alpha\vert}\right\}\end{equation}
and by $\mathfrak{I}_J$ the ideal  generated by $\mathcal G$ in the ring  $K[C,x]$,  where $C$ is a compact notation for the set of new variables $C_{\alpha \gamma}$.

For every polynomial $H\in K[C,x]$, we denote by $\Supp_x(H)$ the set of monomials in the variables $x_i$ that appear in $H$ with non-null coefficients and by $\Coeff_x(H)\subset K[C]$ the set of such coefficients, that we call $x$-coefficients.

Let $\mathcal V_\ell$ and $\mathcal W_\ell$ be the analogous for $\mathcal G$ of $V_\ell$ and of $W_\ell$, respectively, for any $J$-marked set $G$. We will denote by $\mathfrak A_J$ the ideal of $K[C]$ generated by the $x$-coefficients of the $J$-reduced forms, obtained by ${\xrightarrow{\ \mathcal V_{\ell}}\ }$, of the $S$-polynomials $S(F_\alpha, F_{\alpha'})$ among elements of $\mathcal G$. This ideal does not depend on ${\xrightarrow{\ \mathcal V_{\ell}}\ }$ and defines the subscheme structure of $\Mf(J)$ in the affine space $\mathbb A^{\vert C\vert}$ (see \cite[Theorem 4.1]{CR}). Let $\mathfrak A_J^{EK}$ be the ideal of $K[C]$ generated by the $x$-coefficients of the $J$-reduced forms of the EK-polynomials of $\mathcal G$ obtained by ${\xrightarrow{\ \mathcal V_{\ell}}\ }$.

It is clear that $\mathfrak A_J^{EK}\subseteq\mathfrak A_J$. Anyway, we will prove that $\mathfrak A_J^{EK}$ and $\mathfrak A_J$ are the same ideal, although $\mathfrak A_J$ is defined by a set of generators bigger than the set of generators of $\mathfrak A_J^{EK}$. More precisely, we prove that the ideal $\mathfrak {A}_J^{EK}$ contains the $x$-coefficients of every $J$-reduced polynomial in $\mathfrak{I}_J$.

\begin{lemma}\label{formule}\
\begin{enumerate}[(i)]
\item \label{formule_i}For every monomial $x^\beta = x^\alpha \ast_J x^\delta \in J$,  there is  a formula of type
\[
x^\beta=\sum a_ix^{\gamma_i} F_{\alpha_i}+H_\beta,
\]
with $a_i\in K[C]$, $x^{\gamma_i} F_{\alpha_i}\in \mathcal V$,  $x^{\gamma_i}\leq_{Lex}x^\beta$ and $\Supp_x(H_\beta)\subset \cN(J)$.
\item \label{formule_ii}For every polynomial $x_iF_\alpha\in \mathcal W\setminus\mathcal V$,  there is  a formula of type \[
x_iF_\alpha=\sum a_jx^{\eta_j} F_{\alpha_j}+H_{i,\alpha},
\]
with $a_j\in K[C]$, $x^{\eta_j} F_{\alpha_j}\in \mathcal V$, $x^{\eta_j}<_{\mathtt{Lex}} x_i$, $\Supp_x(H_{i,\alpha})\subset \cN(J)$ and $\Coeff_x(H_{i,\alpha})\subset  \mathfrak A_J^{EK}$.
\end{enumerate}
\end{lemma}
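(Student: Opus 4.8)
The two formulas are both of the ``division with remainder'' type, and the natural strategy is a single induction on monomials ordered by $\mathtt{Lex}$, run simultaneously for (\ref{formule_i}) and (\ref{formule_ii}), mirroring the argument in the proof of Theorem~\ref{BuchCrit1} but now carried out over $K[C,x]$ and keeping track of where the $x$-coefficients of the remainder land. First I would establish (\ref{formule_i}). For a monomial $x^\beta = x^\alpha \ast_J x^\delta \in J$ the polynomial $x^\delta F_\alpha$ lies in $\mathcal V$ by Remark~\ref{rm:V}, and $x^\beta - x^\delta F_\alpha = x^\delta T(F_\alpha)$ has $x$-support in $\cN(J)$; setting $H_\beta := x^\delta T(F_\alpha)$ and using that $x^\delta \leq_{\mathtt{Lex}} x^\beta$ (indeed $x^\delta$ divides $x^\beta$) gives the formula immediately, with a single summand $x^{\gamma_1} F_{\alpha_1} = x^\delta F_\alpha$. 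So (\ref{formule_i}) is essentially a restatement of the defining property of $\mathcal V$ and needs no induction at all.

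The substance is in (\ref{formule_ii}). Fix $x_i F_\alpha \in \mathcal W \setminus \mathcal V$, so $x_i > \min(x^\alpha)$, and proceed by induction on $x_i$ with respect to $\mathtt{Lex}$ (the base case being the smallest variable for which such a product is not already in $\mathcal V$). By the Eliahou--Kervaire structure of the syzygies of $J$ there is an EK-couple with $x_i x^\alpha = x^\beta \ast_J x^\eta$ for a suitable $x^\beta \in B_J$, and the corresponding EK-polynomial is $S^{EK}(F_\alpha,F_\beta) = x_i F_\alpha - x^\eta F_\beta$ with $x^\eta <_{\mathtt{Lex}} x_i$ (this inequality is exactly what Lemma~\ref{descLex} gives, since $x^\eta$ arises as the $\ast_J$-cofactor against a monomial whose complement is the single variable $x_i$). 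Reduce $S^{EK}(F_\alpha,F_\beta)$ by $\xrightarrow{\ \mathcal V_\ell\ }$ to a $J$-reduced form $P$; by definition of $\mathfrak A_J^{EK}$ all $x$-coefficients of $P$ lie in $\mathfrak A_J^{EK}$, and the reduction expresses $S^{EK}(F_\alpha,F_\beta) - P$ as a $K[C]$-combination $\sum_k b_k x^{\mu_k} F_{\alpha_k}$ with $x^{\mu_k} F_{\alpha_k} \in \mathcal V$ and, by Lemma~\ref{reduction2}(\ref{reduction2-iii}), with $x^{\mu_k} <_{\mathtt{Lex}} \max_{\mathtt{Lex}}\{x_i, x^\eta\} = x_i$. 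Now $x^\eta F_\beta$ is a monomial multiple of the $J$-marked polynomial $F_\beta$; rewriting $x^\eta F_\beta$ by repeatedly multiplying elements of $\mathcal V$ by single variables strictly $\mathtt{Lex}$-smaller than $x_i$ (each such product is either again in $\mathcal V$, or falls under the inductive hypothesis for (\ref{formule_ii}) at a variable $<_{\mathtt{Lex}} x_i$), one obtains $x^\eta F_\beta = \sum_j a'_j x^{\nu_j} F_{\alpha_j} + H'$ with $x^{\nu_j} F_{\alpha_j} \in \mathcal V$, $x^{\nu_j} <_{\mathtt{Lex}} x_i$, $\Supp_x(H') \subset \cN(J)$ and $\Coeff_x(H') \subset \mathfrak A_J^{EK}$. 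Assembling, $x_i F_\alpha = (S^{EK}(F_\alpha,F_\beta) - P) + P + x^\eta F_\beta$ becomes a formula of the required shape, with remainder $H_{i,\alpha} = P + H'$ whose $x$-coefficients lie in $\mathfrak A_J^{EK}$ and whose $x$-support is in $\cN(J)$.

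The point requiring care — and what I expect to be the main obstacle — is the bookkeeping that keeps every intermediate monomial multiplier strictly $\mathtt{Lex}$-below $x_i$, so that the induction on variables genuinely closes. Concretely: when expanding $x^\eta F_\beta$ one multiplies by the variables dividing $x^\eta$ one at a time; each individual variable is $\leq_{\mathtt{Lex}} \max(x^\eta) < x_i$ (using $x^\eta <_{\mathtt{Lex}} x_i$ and that $x^\eta$ is a monomial in variables no larger than its max), but one must check that the products produced at intermediate stages are again of the form ``variable times an element of $\mathcal V$'' or ``variable times something already handled'', i.e. that one never needs to multiply by $x_i$ itself or by a larger variable to resolve them. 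This is the same descent mechanism as in the proof of Theorem~\ref{BuchCrit1}, where the argument ``$x_i > \min(x^\alpha) \geq \max(x^\delta)$, hence $x_i >_{\mathtt{Lex}} x^\delta$'' is used; here one simply has to run it uniformly over $K[C,x]$ and verify that the $x$-coefficients accumulated from EK-reductions never escape $\mathfrak A_J^{EK}$ (they don't, because $\mathfrak A_J^{EK}$ is an ideal of $K[C]$ and is closed under the $K[C]$-linear combinations produced by the reductions, and because the only ``new'' remainder contributions entering at stage $x_i$ come from $P$, which is an EK-reduced form by construction). Once this descent is set up cleanly, both statements follow; (\ref{formule_i}) is immediate and (\ref{formule_ii}) is the inductive heart.
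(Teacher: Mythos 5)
Your argument for part (\ref{formule_ii}) has the right core, but part (\ref{formule_i}) contains a genuine error. You set $H_\beta := x^\delta T(F_\alpha)$ and claim its $x$-support lies in $\cN(J)$. This is false in general: $\Supp_x(T(F_\alpha))\subset \cN(J)$, but $\cN(J)$ is not closed under multiplication by monomials, so monomials $x^\delta x^\gamma$ with $x^\gamma\in\cN(J)$ may well lie in $J$ (in Example \ref{esempirid}, for instance, \emph{every} monomial of $x_2\cdot T(f'')$ lies in $J$). Hence one step of rewriting is not enough, and (\ref{formule_i}) is not ``a restatement of the defining property of $\mathcal V$'': one must run the full Noetherian reduction $\xrightarrow{\ \mathcal V_\ell\ }$ of $x^\beta$ (Proposition \ref{construction of $J$-normal form}) until a $J$-reduced remainder is reached, and use Lemma \ref{reduction2} (\ref{reduction2-iii}) to see that every multiplier appearing along the way stays $\leq_{\mathtt{Lex}} x^\delta \leq_{\mathtt{Lex}} x^\beta$ --- this is exactly the paper's proof. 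The condition $\Supp_x(H_\beta)\subset\cN(J)$ is not cosmetic: Proposition \ref{prop:formula} uses formula (\ref{formule_i}) precisely to replace monomials of the remainder that have fallen back into $J$, so a remainder supported partly in $J$ would break the induction there.

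For (\ref{formule_ii}), reducing $S^{EK}(F_\alpha,F_\beta)=x_iF_\alpha-x^\eta F_\beta$ by $\xrightarrow{\ \mathcal V_\ell\ }$, noting that the $x$-coefficients of the reduced form lie in $\mathfrak A_J^{EK}$ by definition, and bounding the multipliers via Lemma \ref{descLex} and Lemma \ref{reduction2} (\ref{reduction2-iii}), is exactly the intended argument. However, your extra expansion of $x^\eta F_\beta$ is both unnecessary and not actually licensed by your induction. Unnecessary, because $x_ix^\alpha=x^\beta\ast_J x^\eta$ forces $\max(x^\eta)\leq\min(x^\beta)$, so $x^\eta F_\beta$ already belongs to $\mathcal V$ and can be kept as one of the summands $a_jx^{\eta_j}F_{\alpha_j}$ with $x^{\eta_j}=x^\eta<_{\mathtt{Lex}}x_i$. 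Not licensed, because your inductive hypothesis in (\ref{formule_ii}) only covers products of a single variable with an element of $\mathcal G$, whereas the intermediate objects in your proposed rewriting are of the form (variable)$\cdot$(general element of $\mathcal V$); handling those is precisely the content of Proposition \ref{prop:formula}, which is proved \emph{from} this lemma, so appealing to that mechanism here would be circular. Dropping the detour and repairing (\ref{formule_i}) as above yields the paper's proof.
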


\begin{proof}
Statement (\ref{formule_i}) follows from the existence of $J$-reduced forms obtained by ${\xrightarrow{\ \mathcal V_{\ell}}\ }$ and by Lemma \ref{reduction2}, (\ref{reduction2-iii}). Statement (\ref{formule_ii}) follows also from the definition of $\mathfrak A_J^{EK}$.
\end{proof}

\begin{proposition} \label{prop:formula}
For every polynomial $x^\delta F_\alpha\in \mathcal W\setminus \mathcal V$, we have
\begin{equation} \label{eq:formula}
x^\delta F_\alpha=\sum b_jx^{\eta_j} F_{\alpha_j}+H_{\delta,\alpha},
\end{equation}
with $b_j\in K[C]$, $x^{\eta_j} F_{\alpha_j}\in \mathcal V$, $x^{\eta_j}<_{\mathtt{Lex}} x^\delta$, $\Supp_x(H_{\delta,\alpha})\subset \cN(J)$ and $\Coeff_x(H_{\delta,\alpha})\subset  \mathfrak A_J^{EK}$.
\end{proposition}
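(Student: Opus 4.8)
The plan is to prove the statement by induction on the monomial $x^\delta$ ordered by $\mathtt{Lex}$, reducing the general case $x^\delta F_\alpha$ with $|\delta|>1$ to the case $|\delta|=1$ already handled by Lemma \ref{formule}, (\ref{formule_ii}). The base of the induction is $x^\delta=1$, which is vacuous since $F_\alpha\in\mathcal V$. So assume $x^\delta F_\alpha\in\mathcal W\setminus\mathcal V$ and that the claimed formula holds for every $x^{\delta'}F_{\alpha'}\in\mathcal W\setminus\mathcal V$ with $x^{\delta'}<_{\mathtt{Lex}} x^\delta$.

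First I would dispose of the case $|\delta|=1$: here $x^\delta=x_i$ for some variable $x_i$, and the statement is exactly Lemma \ref{formule}, (\ref{formule_ii}). For $|\delta|>1$, write $x^\delta=x_i\cdot x^{\delta'}$ with $x_i=\max(x^\delta)$, so that $x^{\delta'}<_{\mathtt{Lex}} x^\delta$. We distinguish whether $x^{\delta'}F_\alpha$ lies in $\mathcal V$ or not. If $x^{\delta'}F_\alpha\notin\mathcal V$, apply the inductive hypothesis to it, obtaining $x^{\delta'}F_\alpha=\sum_j b_j x^{\eta_j}F_{\alpha_j}+H_{\delta',\alpha}$ with $x^{\eta_j}<_{\mathtt{Lex}} x^{\delta'}$, $\Supp_x(H_{\delta',\alpha})\subset\cN(J)$ and $\Coeff_x(H_{\delta',\alpha})\subset\mathfrak A_J^{EK}$; if instead $x^{\delta'}F_\alpha\in\mathcal V$, this same identity holds trivially with a single term and $H_{\delta',\alpha}=0$. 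In either case, multiply through by $x_i$:
\[
x^\delta F_\alpha = x_i x^{\delta'}F_\alpha = \sum_j b_j\, x_i x^{\eta_j}F_{\alpha_j} + x_i H_{\delta',\alpha}.
\]
Now each $x_i x^{\eta_j}F_{\alpha_j}$ has to be rewritten: if it already lies in $\mathcal V$, leave it; otherwise it lies in $\mathcal W\setminus\mathcal V$ with leading monomial factor $x_i x^{\eta_j}$, and since $x^{\eta_j}<_{\mathtt{Lex}} x^{\delta'}$ forces $x_i x^{\eta_j}<_{\mathtt{Lex}} x_i x^{\delta'}=x^\delta$ (compatibility of $>_{\mathtt{Lex}}$ with multiplication, Lemma \ref{reduction2}, (\ref{reduction2-i})), we may apply the inductive hypothesis to $x_i x^{\eta_j}F_{\alpha_j}$, getting a formula with $\mathcal V$-terms whose monomial factors are $<_{\mathtt{Lex}} x_i x^{\eta_j}\leq_{\mathtt{Lex}} x^\delta$ and remainder with $x$-coefficients in $\mathfrak A_J^{EK}$. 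For the term $x_i H_{\delta',\alpha}$: its support is in $\cN(J)$ multiplied by $x_i$, so I rewrite each monomial $x_i x^\gamma$ with $x^\gamma\in\cN(J)$; if $x_i x^\gamma\in\cN(J)$ there is nothing to do, while if $x_i x^\gamma\in J$ I invoke Lemma \ref{formule}, (\ref{formule_i}), to write $x_i x^\gamma=\sum a_k x^{\gamma_k}F_{\alpha_k}+H$ with $x^{\gamma_k}\leq_{\mathtt{Lex}} x_i x^\gamma$ and $\Supp_x(H)\subset\cN(J)$. Since the coefficients of the monomials $x^\gamma$ in $H_{\delta',\alpha}$ already lie in $\mathfrak A_J^{EK}$, which is an ideal of $K[C]$, all the new $x$-coefficients produced remain in $\mathfrak A_J^{EK}$.

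Collecting all pieces gives the desired formula $x^\delta F_\alpha=\sum b'_j x^{\eta'_j}F_{\alpha'_j}+H_{\delta,\alpha}$ with $x^{\eta'_j}F_{\alpha'_j}\in\mathcal V$, $\Supp_x(H_{\delta,\alpha})\subset\cN(J)$ and $\Coeff_x(H_{\delta,\alpha})\subset\mathfrak A_J^{EK}$; one then checks that every $\mathcal V$-term produced has monomial factor $<_{\mathtt{Lex}} x^\delta$, either directly or from the inductive estimates combined with $x_i x^{\eta_j}<_{\mathtt{Lex}} x^\delta$. The point requiring the most care — the main obstacle — is the bookkeeping on the $\mathtt{Lex}$-exponents: I must make sure that after multiplying by $x_i$ and re-expanding the products $x_i x^{\eta_j}F_{\alpha_j}$, every surviving $\mathcal V$-term genuinely has first factor strictly smaller than $x^\delta$ in $\mathtt{Lex}$, so that the induction is well-founded and the conclusion $x^{\eta_j}<_{\mathtt{Lex}} x^\delta$ holds for the final formula; this uses crucially that $x_i=\max(x^\delta)$ and the monotonicity of $\mathtt{Lex}$ under multiplication. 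A minor secondary point is verifying that the non-$\mathcal V$ terms $x_i x^{\eta_j} F_{\alpha_j}$ that arise are indeed in $\mathcal W\setminus\mathcal V$ (rather than being, say, a multiple of $x_0$ that stays in $\mathcal V$, in which case no rewriting is needed), but this is exactly the dichotomy handled above and presents no real difficulty.
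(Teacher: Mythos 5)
There is a genuine gap, and it comes from your choice of the variable to split off: you factor $x^\delta = x_i\cdot x^{\delta'}$ with $x_i=\max(x^\delta)$, whereas the induction only works if you take $x_i=\min(x^\delta)$, as the paper does. With $x_i=\max(x^\delta)$ it can happen that $x^\delta F_\alpha\in\mathcal W\setminus\mathcal V$ while $x^{\delta'}F_\alpha\in\mathcal V$ (for instance $x^\delta=x_0x_2$ and $\min(x^\alpha)=x_1$: then $\max(x^\delta)=x_2>x_1$ but $\max(x^{\delta'})=x_0\leq x_1$). In that branch your ``trivial identity'' is $x^{\delta'}F_\alpha=1\cdot x^{\delta'}F_\alpha$, so after multiplying by $x_i$ the only term to rewrite is $x_i x^{\delta'}F_\alpha=x^\delta F_\alpha$ itself: the Lex factor has not decreased (your claim ``$x^{\eta_j}<_{\mathtt{Lex}}x^{\delta'}$ forces $x_ix^{\eta_j}<_{\mathtt{Lex}}x^\delta$'' silently assumes the strict inequality of the other branch), the inductive hypothesis does not apply, and the argument is circular. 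Choosing $x_i=\min(x^\delta)$ removes this case entirely: since $\vert\delta\vert>1$, one has $\max(x^{\delta'})=\max(x^\delta)>\min(x^\alpha)$, so $x^{\delta'}F_\alpha$ always stays in $\mathcal W\setminus\mathcal V$ and the inductive hypothesis gives a strict Lex decrease $x^{\eta_j}<_{\mathtt{Lex}}x^{\delta'}$, hence $x_ix^{\eta_j}<_{\mathtt{Lex}}x^\delta$.

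A second, more minor, weakness is in the treatment of $x_iH_{\delta',\alpha}$. For a monomial $x^\gamma\in\Supp_x(H_{\delta',\alpha})\subset\cN(J)$ with $x_ix^\gamma\in J$ you invoke Lemma \ref{formule} (\ref{formule_i}) with the bound $x^{\gamma_k}\leq_{\mathtt{Lex}}x_ix^\gamma$; this is useless for the conclusion, because $x_ix^\gamma$ may well be Lex-larger than $x^\delta$, and the final statement requires $x^{\gamma_k}<_{\mathtt{Lex}}x^\delta$. The estimate you actually need is the one the paper extracts from Lemma \ref{descLex}: since $x^\gamma\in\cN(J)$ and $x_ix^\gamma\in J$, the decomposition $x_ix^\gamma=x^{\alpha'}\ast_J x^\epsilon$ has $x^\epsilon<_{\mathtt{Lex}}x_i$, and the $\mathcal V$-terms used to rewrite $x_ix^\gamma$ have Lex factors $\leq_{\mathtt{Lex}}x^\epsilon<_{\mathtt{Lex}}x_i<_{\mathtt{Lex}}x^\delta$ (the last inequality because $x_i$ is a proper divisor of $x^\delta$). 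Your handling of the coefficients (multiplication by elements of the ideal $\mathfrak A_J^{EK}$) is fine; it is the Lex bookkeeping at these two points that needs the corrections above, and with $x_i=\min(x^\delta)$ plus Lemma \ref{descLex} the proof becomes exactly the paper's.
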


\begin{proof}
For $\vert\delta\vert=1$ it is enough to use Lemma \ref{formule} (\ref{formule_ii}). Assume that $\vert\delta\vert>1$ and that the thesis holds for every $x^{\delta'} <_{\mathtt{Lex}} x^{\delta}$. Let $x_i=\min(x^\delta)$ and $x^{\delta'}=\frac{x^\delta}{x_i}$, so that $x^{\delta'} F_\alpha$ belongs to $\mathcal W\setminus \mathcal V$.

By the inductive hypothesis, we have $x^{\delta'}F_\alpha=\sum b'_jx^{\eta'_j} F_{\alpha_j}+H_{\delta',\alpha}$, with $x^{\eta'_j} <_{\mathtt{Lex}}x^{\delta'}$. So, multiplying by $x_i$, we obtain $x^{\delta}F_\alpha=\sum b'_jx_i x^{\eta'_j} F_{\alpha_j}+x_i H_{\delta',\alpha}$ and the thesis holds for every polynomial $x_i x^{\eta'_j} F_{\alpha_j}$ that belongs to $\mathcal W\setminus \mathcal V$ because $x_ix^{\eta'_j} <_{\mathtt{Lex}} x_ix^{\delta'}=x^\delta$. Then, we replace such polynomials by formulas of type \eqref{eq:formula} and obtain 
\[
x^{\delta}F_\alpha=\sum b_s x^{\eta_s} F_{\alpha_s}+H'+x_i H_{\delta',\alpha}
\]
where the first sum satisfies the conditions of \eqref{eq:formula} and $H'$ is $J$-reduced with $\Supp_x(H')\subset \cN(J)$ and $\Coeff_x(H')\subset\mathfrak A_J^{EK}$.

Note that $\Coeff_x(x_i H_{\delta',\alpha})=\Coeff_x(H_{\delta',\alpha}) \subset \mathfrak A_J^{EK}$, but we do not know if $\Supp_x(x_i H_{\delta',\alpha}) \subset \cN(J)$. If $x^{\beta'}\in \Supp_x(H_{\delta',\alpha})$ has $x$-coefficient $b$ in $H_{\delta',\alpha}$ and $x^\beta=x_i x^{\beta'}$ belongs to $J$, then we can use Lemma \ref{formule} (\ref{formule_i}) obtaining $b x^\beta=\sum b a_k x^{\gamma_i} F_{\alpha_k}+ b H_\beta$. Moreover, if $x^\beta=x^{\alpha'}\ast_J x^\epsilon$, then $x^{\gamma_i}\leq_{\mathtt{Lex}} x^\epsilon <_{\mathtt{Lex}} x_i <_{\mathtt{Lex}} x^\delta$, where the second inequality is due to the fact that $x^{\beta'}\in \mathcal N(J)$ and to Lemma \ref{descLex}, and all the $x$-coefficients of $H_\beta$ belong to $\mathfrak A_J^{EK}$ because they are divisible by $b$. Replacing all such monomials $x^\beta$, we obtain the thesis and $H_{\delta,\alpha}$ is $J$-reduced with $x$-coefficients in $\mathfrak A_J^{EK}$, because it is the sum of $J$-reduced polynomials with $x$-coefficients in $\mathfrak A_J^{EK}$.
\end{proof}

\begin{corollary} \label{cor:formula}
Every polynomial of $\mathfrak I_J$ can be written in a unique way as $\sum b_j x^{\eta_j} F_{\alpha_j}+H$, with $b_j\in K[C]$, $x^{\eta_j} F_{\alpha_j}\in \mathcal V$ and $H$ $J$-reduced. Moreover, we obtain also that  $\Coeff_x(H) \subset \mathfrak A_J^{EK}$.
\end{corollary}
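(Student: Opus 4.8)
The plan is to deduce this statement directly from Proposition \ref{prop:formula}, which already provides the key structural result for a single generator $x^\delta F_\alpha$ of $\mathcal{W}$, together with the fact recalled in Definition \ref{def:$V_m$} that $(\mathcal{G})_\ell$ is generated as a $K[C]$-module by $\mathcal{W}_\ell$. First I would take an arbitrary element $P \in \mathfrak{I}_J$ and write it as a $K[C]$-linear combination of elements of $\mathcal{W}$, say $P = \sum_k c_k\, x^{\delta_k} F_{\alpha_k}$ with $c_k \in K[C]$. For each term with $x^{\delta_k} F_{\alpha_k} \in \mathcal{V}$, leave it as is; for each term with $x^{\delta_k} F_{\alpha_k} \in \mathcal{W} \setminus \mathcal{V}$, apply Proposition \ref{prop:formula} to rewrite it as $\sum_j b_j x^{\eta_j} F_{\alpha_j} + H_{\delta_k,\alpha_k}$ with $x^{\eta_j} F_{\alpha_j} \in \mathcal{V}$, $\Supp_x(H_{\delta_k,\alpha_k}) \subset \cN(J)$, and $\Coeff_x(H_{\delta_k,\alpha_k}) \subset \mathfrak{A}_J^{EK}$. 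Summing, I obtain $P = \sum b_j x^{\eta_j} F_{\alpha_j} + H$ with all $x^{\eta_j} F_{\alpha_j} \in \mathcal{V}$, $H$ $J$-reduced, and $\Coeff_x(H) \subset \mathfrak{A}_J^{EK}$ since it is a $K[C]$-linear combination of the $H_{\delta_k,\alpha_k}$ (multiplication by elements of $K[C]$ does not affect $\Supp_x$ or membership of $x$-coefficients in the ideal $\mathfrak{A}_J^{EK}$). This gives existence of the desired expression.

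For uniqueness, I would argue as follows. Suppose $\sum b_j x^{\eta_j} F_{\alpha_j} + H = \sum b'_j x^{\eta_j} F_{\alpha_j} + H'$ are two such expressions, with all head-term data $x^{\eta_j} F_{\alpha_j} \in \mathcal{V}$ and $H, H'$ $J$-reduced. Then $\sum (b_j - b'_j) x^{\eta_j} F_{\alpha_j} = H' - H$. The crucial point is that distinct polynomials in $\mathcal{V}$ have distinct head terms: by Remark \ref{rm:V}, if $x^\delta F_\alpha \in \mathcal{V}$ then $\Ht(x^\delta F_\alpha) = x^\beta = x^\alpha \ast_J x^\delta$, and the decomposition $x^\beta = x^\alpha \ast_J x^\delta$ is unique by Definition \ref{stellina}, so $\mathcal{V}$ is in bijection with $\{x^\beta : x^\beta \in J\}$ via the head-term map. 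Hence the leading monomials $x^{\eta_j + \alpha_j}$ are pairwise distinct, and in a nonzero combination $\sum (b_j - b'_j) x^{\eta_j} F_{\alpha_j}$ the $\tt{Lex}$-largest surviving head term cannot be cancelled by any other term nor lie in $\cN(J)$ (it lies in $J$); so it would have to appear in $H' - H$, contradicting that $H, H'$ are $J$-reduced. Therefore all $b_j = b'_j$ and consequently $H = H'$. That $\Coeff_x(H) \subset \mathfrak{A}_J^{EK}$ for the unique $H$ then follows from the existence argument, since the representation is unique.

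The step I expect to be the only genuine content is the reduction of a general element of $\mathfrak{I}_J$ to a $K[C]$-combination of elements of $\mathcal{W}$; but this is immediate from the remark in Definition \ref{def:$V_m$} that $(\mathcal{G})_\ell$ is generated by $\mathcal{W}_\ell$, applied degree by degree (every element of $\mathfrak{I}_J$ is a sum of its homogeneous components, each of which lies in some $(\mathcal{G})_\ell$). The remaining work — bookkeeping the $\tt{Lex}$ inequalities and the $x$-coefficient containments through the sum, and the uniqueness argument via distinctness of head terms in $\mathcal{V}$ — is routine. In fact the main substance has already been carried out in Proposition \ref{prop:formula}, so this corollary is essentially a packaging statement, and I would keep the proof short, citing Proposition \ref{prop:formula} for the single-generator case and the linearity of all the relevant conditions for the passage to arbitrary elements.
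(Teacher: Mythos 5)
Your existence argument coincides with the paper's: write an element of $\mathfrak I_J$ as a $K[C]$-combination of elements of $\mathcal W$, keep the $\mathcal V$-terms, rewrite the $\mathcal W\setminus\mathcal V$-terms via Proposition \ref{prop:formula}, and use that $\mathfrak A_J^{EK}$ is an ideal of $K[C]$. The gap is in the uniqueness step. You take the $\mathtt{Lex}$-largest head term among the summands $x^{\eta_j}F_{\alpha_j}$ with nonzero coefficient and assert it ``cannot be cancelled by any other term''. The distinctness of head terms of elements of $\mathcal V$ (which you correctly deduce from Remark \ref{rm:V} and the uniqueness of the $\ast_J$-decomposition) only rules out cancellation against another \emph{head} term; it says nothing about cancellation against a \emph{tail} monomial $x^{\eta_k}x^\gamma$, $x^\gamma\in\cN(J)$, of another summand. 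Such a monomial can lie in $J$, and since $\mathcal G$ is only a marked set, not a Gr\"obner basis, tail monomials need not be $\mathtt{Lex}$-smaller than the head term of their own summand: a monomial $x^\gamma\in\cN(J)$ can be $\mathtt{Lex}$-larger than $x^{\alpha_k}\in B_J$ of the same degree (this fails only for special ideals such as Lex-segments). Hence $\mathtt{Lex}$-maximality of $x^{\eta_1+\alpha_1}$ among the head terms yields no contradiction when $x^{\eta_1+\alpha_1}=x^{\eta_k+\gamma}$; your argument silently imports the Gr\"obner-basis fact that heads dominate tails, which is exactly what is unavailable in this setting.

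The paper closes precisely this hole by a different choice of maximal element: the summands are ordered by $>_{\mathtt{Lex}}$ on the multipliers $x^{\eta_j}$ (ties broken by an arbitrary order on $B_J$), and one takes $x^{\eta_1}F_{\alpha_1}$ maximal for this order. If its head term equalled a tail monomial $x^{\eta_k}x^\gamma\in J$ of another summand, Lemma \ref{descLex} would give $x^{\eta_k}x^\gamma=x^{\alpha'}\ast_J x^{\eta'}$ with $x^{\eta'}<_{\mathtt{Lex}}x^{\eta_k}$, and comparing with $x^{\eta_1+\alpha_1}=x^{\alpha_1}\ast_J x^{\eta_1}$ (Remark \ref{rm:V}) and using uniqueness of the $\ast_J$-decomposition, one gets $x^{\eta_1}=x^{\eta'}<_{\mathtt{Lex}}x^{\eta_k}$, contradicting the maximality of $x^{\eta_1}$. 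Your proof needs this step (or an equivalent one, e.g.\ a chain argument on the $x^{\eta_j}$); with it, your uniqueness argument becomes the paper's, and the rest of your proposal matches the paper.
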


\begin{proof}
By definition, every polynomial of $\mathfrak I_J$ is a linear combination of polynomials of $\mathcal V \cup (\mathcal W\setminus \mathcal V)$ with $x$-coefficients in $K[C]$ and, by Proposition \ref{prop:formula}, every such polynomial can be written has described in the statement. Hence, we have only to prove the uniqueness of this writing.

Let $\sum b_jx^{\eta_j} F_{\alpha_j}+H=0$ be the difference between two writings of the same polynomial of $\mathfrak I_J$, with $b_j\neq 0$, $x^{\eta_j} F_{\alpha_j}\in \mathcal V$ pairwise different and $H$ $J$-reduced. Let $x^{\eta_1} x^{\alpha_1}$ the maximum of the monomials w.r.t. the order for which $x^{\eta_i} x^{\alpha_i}$ is lower than $x^{\eta_j} x^{\alpha_j}$ if $x^{\eta_i} <_{\mathtt{Lex}} x^{\eta_j}$ or $x^{\eta_i} = x^{\eta_j}$ and $x^{\alpha_i} < x^{\alpha_j}$, where $<$ is any order fixed on $B_J$. By definition of $\mathcal V$, the unique polynomial of $\mathcal V$ with head term $x^{\eta_1} x^{\alpha_1}$ is $x^{\eta_1} F_{\alpha_1}$. Moreover, the monomial $x^{\eta_1} x^{\alpha_1}$ does not appear with a non-null coefficient in any polynomial of the sum because every other monomial belongs to $\cN(J)$ or is lower than it, by construction. Further, $x^{\eta_1} x^{\alpha_1}$ does not belong to $\Supp_x(H)$ because $\Supp_x(H)\subset \cN(J)$ and $x^{\eta_1} x^{\alpha_1}\in J$. Thus, we obtain a contradiction to the fact that $b_j\neq 0$.
\end{proof}

\begin{corollary} \label{last}
The ideal $\mathfrak A_J^{EK}$ contains the $x$-coefficients of every $J$-reduced polynomial of $\mathfrak I_J$. In particular, $\mathfrak A_J^{EK}=\mathfrak A_J$.
\end{corollary}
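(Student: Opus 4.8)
The plan is to deduce the statement almost immediately from Corollary \ref{cor:formula}, using only the definition of $\mathfrak A_J^{EK}$ and the chain of inclusions $\mathfrak A_J^{EK} \subseteq \mathfrak A_J$ already observed. First I would take an arbitrary $J$-reduced polynomial $H \in \mathfrak I_J$. By Corollary \ref{cor:formula}, the element $H$ admits a (unique) decomposition $H = \sum b_j x^{\eta_j} F_{\alpha_j} + H'$ with $x^{\eta_j} F_{\alpha_j} \in \mathcal V$, $b_j \in K[C]$ and $H'$ being $J$-reduced, and moreover $\Coeff_x(H') \subset \mathfrak A_J^{EK}$. But $H$ itself is $J$-reduced, so $H$ is \emph{one} such decomposition (with empty $\mathcal V$-part and $H' = H$); by the uniqueness clause in Corollary \ref{cor:formula} all the $b_j$ vanish and $H' = H$. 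Hence $\Coeff_x(H) = \Coeff_x(H') \subset \mathfrak A_J^{EK}$, which is exactly the first assertion.

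For the second assertion, I would argue that $\mathfrak A_J \subseteq \mathfrak A_J^{EK}$, which together with the already-noted reverse inclusion gives equality. By definition $\mathfrak A_J$ is generated by the $x$-coefficients of the $J$-reduced forms (obtained via ${\xrightarrow{\ \mathcal V_{\ell}}\ }$) of the $S$-polynomials $S(F_\alpha, F_{\alpha'})$ among elements of $\mathcal G$. Each such $S$-polynomial $S(F_\alpha, F_{\alpha'}) = x^\gamma F_\alpha - x^{\gamma'} F_{\alpha'}$ lies in $\mathfrak I_J$, and reducing it by ${\xrightarrow{\ \mathcal V_{\ell}}\ }$ produces a $J$-reduced polynomial that still lies in $\mathfrak I_J$ (reduction by elements of $\mathcal V \subset \mathfrak I_J$ stays in $\mathfrak I_J$). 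Applying the first assertion to this $J$-reduced element of $\mathfrak I_J$, its $x$-coefficients belong to $\mathfrak A_J^{EK}$. Since these $x$-coefficients generate $\mathfrak A_J$, we get $\mathfrak A_J \subseteq \mathfrak A_J^{EK}$, and therefore $\mathfrak A_J^{EK} = \mathfrak A_J$.

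There is essentially no obstacle here: all the real work has already been done in Proposition \ref{prop:formula} and Corollary \ref{cor:formula}, whose proofs carry out the inductive bookkeeping showing that the remainder term's $x$-coefficients stay inside $\mathfrak A_J^{EK}$. The only point requiring a word of care is the observation that the $J$-reduced form of an $S$-polynomial, computed by the reduction relation ${\xrightarrow{\ \mathcal V_{\ell}}\ }$, is again an element of $\mathfrak I_J$ — but this is immediate since every step of the reduction subtracts a $K[C]$-multiple of some $x^{\eta} F_{\alpha_j} \in \mathcal V \subset \mathfrak I_J$, so the result differs from $S(F_\alpha,F_{\alpha'}) \in \mathfrak I_J$ by an element of $\mathfrak I_J$.
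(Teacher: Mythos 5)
Your proof is correct and follows essentially the same route as the paper: the first assertion is obtained exactly as in the paper by applying the uniqueness clause of Corollary \ref{cor:formula} to the trivial decomposition $F=0+F$ of a $J$-reduced element of $\mathfrak I_J$, and the second assertion is the paper's closing remark (``due to the definition of $\mathfrak A_J$'') spelled out in slightly more detail, noting that the $J$-reduced forms of the $S$-polynomials are themselves $J$-reduced elements of $\mathfrak I_J$.
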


\begin{proof}
Let $F$ be a $J$-reduced polynomial of $\mathfrak I_J$ and let $F=\sum b_jx^{\eta_j} F_{\alpha_j}+H$ as in Corollary \ref{cor:formula}. Since $F$ itself is $J$-reduced, also $F=0+F$ is a formula as described in Corollary \ref{cor:formula} and we obtain that $F=H$, by the uniqueness of this formula. Hence, we have $\Coeff_x(F)=\Coeff_x(H) \subset\mathfrak A_J^{EK}$. The last assertion is due to the definition of $\mathfrak A_J$.
\end{proof}

\begin{remark}\label{rem5}
Actually, for every ideal $\widehat{\mathfrak A}_J\subseteq \mathfrak A_J\subseteq K[C]$ such that condition (\ref{formule_ii}) of Lemma \ref{formule} holds, also Corollary \ref{last} holds. We are then allowed to choose different sets of $S$-polynomials of $\mathcal G$ in order to obtain generators of the ideal $\mathfrak A_J$.
\end{remark}

\section{Superminimal generators and reduction}\label{secsupermin}

In this section we introduce the notion of $m$-truncation ideal  and a new polynomial reduction process, that we call {\em superminimal reduction}, useful to find a new set of equations to define a marked scheme. From the next section on, we will focus on $J$-marked schemes with $J$ a strongly stable $m$-truncation. The reason is twofold: on the one hand, strongly stable $m$-truncation ideals have a good behavior also from the geometric point of view (Theorem \ref{esempio} and Example \ref{no one-to-one}); on the other hand, the superminimal reduction is Noetherian when we take a strongly stable $m$-truncation ideal (Theorem \ref{ridsm}), but it is not if we just consider a strongly stable ideal (Example \ref{no ssr}).

\subsection{Truncation strongly stable ideals}\label{subsec31}

\begin{definition} Let $J\subseteq S$ be a {monomial idea}l. We will say that $J$ is an \emph{$m$-truncation} if $J$ is the truncation of $J^\sat$ in degree $m$, that is  $J=(J^\sat)_{\geq m}$.
\end{definition}

We observe that an $m$-truncation ideal $J$ is strongly stable if and only if $J^\sat$ is and that if $J$ is strongly stable, then $J^\sat=\underline J$.

The following Lemma highlights some simple features of $m$-truncation strongly stable ideals, which will turn out to be crucial in the proofs of our main results.

\begin{lemma}\label{proptagliati}  Let $J$ be a strongly stable $m$-truncation. Then:
\begin{enumerate}[(i)]
	\item \label{tagli}$B_{J}\cap B_{\underline J}=({B_{\underline J}})_{\geq m} $.
	\item \label{taglii} $\forall x^\beta \in B_{\underline J}\setminus B_{J}$: \ $x^\beta x_0^{m-\vert \beta \vert} \in B_{J} $.
	\item \label{tagliii}$\forall x^\gamma \in S_{\geq m}$, $\forall t\in \mathbb N$ : \ $x^\gamma x_0^t \in J \Leftrightarrow x^\gamma \in J$.
	\item \label{tagliv}$\cN(J)_{\geq m}=\cN(\underline J)_{\geq m}$.
	\item  \label{taglv}$\forall \ h \in S_{\geq m}$: \ $h$ is $J$-reduced $\Leftrightarrow$  $h$ is $\underline J$-reduced.
	\item \label{taglvi} If $I$ belongs to $\Mf(J)$, then for every homogeneous polynomial $h$ of degree $\geq m$, $J$-normal forms modulo $I$ satisfy: $\Nf(x_0^{t}\cdot h)= x_0^{t} \cdot \Nf(h)$.
\end{enumerate}
\end{lemma}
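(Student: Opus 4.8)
The plan is to reduce every part to the single defining fact $J=(\underline J)_{\geq m}$, i.e.\ $J_t=(\underline J)_t$ for $t\geq m$ while $J_t=(0)$ for $t<m$, together with the elementary observation (recorded right after Definition \ref{satmon}) that no element of $B_{\underline J}$ is divisible by $x_0$. I would treat (i) and (ii) first, as they are purely combinatorial statements about minimal generators. For (i) I would use $J\subseteq\underline J$: a monomial of $B_{\underline J}$ that lies in $J$ is automatically a minimal generator of $J$, since a proper divisor of it in $J$ would be a proper divisor in $\underline J$ as well; conversely $B_J\subseteq J\subseteq S_{\geq m}$ forces every element of $B_J\cap B_{\underline J}$ into degree $\geq m$, and any $x^\alpha\in B_{\underline J}$ of degree $\geq m$ lies in $(\underline J)_{\geq m}=J$, hence in $B_J$. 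Part (ii) then follows from (i): if $x^\beta\in B_{\underline J}\setminus B_{J}$ then $\vert\beta\vert<m$, so $x^\beta x_0^{m-\vert\beta\vert}$ has degree $m$, lies in $(\underline J)_m=J_m$, and is a minimal generator of $J$ because removing any variable drops its degree to $m-1$, where $J$ is zero.

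The main obstacle --- and really the only step that is not bookkeeping --- is (iii). One inclusion is trivial (ideals are closed under multiplication). For the other I would argue that $x^\gamma x_0^t\in J\subseteq\underline J$ is divisible by some element of $B_{\underline J}$, which is coprime with $x_0$ and hence already divides $x^\gamma$; so $x^\gamma\in\underline J$, and since $\vert\gamma\vert\geq m$ we get $x^\gamma\in(\underline J)_{\geq m}=J$. (Equivalently, this is the assertion that the saturated ideal $\underline J=J^{\sat}$ satisfies $\underline J:x_0^\infty=\underline J$.)

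Everything else cascades from (iii) and the degreewise identity $J_t=(\underline J)_t$ for $t\geq m$. For (iv): a monomial of degree $\geq m$ belongs to $J$ if and only if it belongs to $\underline J$, so $\cN(J)_{\geq m}=\cN(\underline J)_{\geq m}$. Statement (v) is then immediate, since for $h\in S_{\geq m}$ the support $\Supp(h)$ consists of monomials of degree $\geq m$, so ``$\Supp(h)\subseteq\cN(J)$'' is exactly the condition ``$\Supp(h)\subseteq\cN(J)_{\geq m}$'', and likewise for $\underline J$. Finally, for (vi), I would take $h$ homogeneous of degree $d\geq m$, set $h_0=\Nf(h)$ (so $h-h_0\in I$ and $h_0$ is $J$-reduced and homogeneous of degree $d$), and check that $x_0^t h_0$ is again $J$-reduced: every $x^\mu\in\Supp(h_0)\subseteq\cN(J)$ has degree $\geq m$, hence $x_0^t x^\mu\notin J$ by (iii). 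Since $x_0^t h-x_0^t h_0=x_0^t(h-h_0)\in I$, the polynomial $x_0^t h_0$ is a $J$-reduced form of $x_0^t h$ modulo $I$; as $I\in\Mf(J)$ such a form is unique, forcing $\Nf(x_0^t h)=x_0^t h_0=x_0^t\Nf(h)$.
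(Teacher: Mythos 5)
Your proposal is correct and follows essentially the same route as the paper: everything is reduced to the defining identity $J_t=(\underline J)_t$ for $t\geq m$, with (iii) proved via the fact that the minimal generators of the saturated strongly stable ideal $\underline J$ are free of $x_0$ (the paper states this more tersely as "$x^\gamma x_0^t\in J$ implies $x^\gamma\in\underline J$"), and (vi) obtained exactly as in the paper from (iii) plus uniqueness of $J$-reduced forms modulo $I\in\Mf(J)$. You merely fill in the details the paper leaves as "straightforward," so no substantive difference.
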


\begin{proof} Facts (\ref{tagli}) and (\ref{taglii}) are straightforward consequences of the definition of $m$-truncations. 

For (\ref{tagliii}), we only prove the non trivial part \lq\lq$\Rightarrow$\rq\rq. If $x^\gamma x_0^t \in J$, then $x^\gamma $ belongs to $\underline J$. Since $J$ is an $m$-truncation and $x^\gamma\in S_{\geq m}$, then $x^\gamma \in J$ too. 

Statements (\ref{tagliv}) and (\ref{taglv}) are obviously equivalent to (\ref{tagliii}).

For (\ref{taglvi}), we recall that   the $J$-reduced form modulo $I$ of any polynomial is  unique since  $I$ belongs to  $\Mf(J)$. By (\ref{tagliii}), both $\Nf(x_0^{t}\cdot h)$ and $  x_0^{t} \cdot \Nf(h)$ are $J$-reduced forms of $  x_0^{t} h$ and then they coincide.
\end{proof}

\begin{theorem}\label{esempio}
Let $J$ be a strongly stable $m$-truncation ideal. Two different ideals $\mathfrak a$ and $\mathfrak b$ of $\Mf(J)$ give rise to different subschemes of $\PP^n$, thus they correspond to different points of  the Hilbert scheme $\hilbp$ with $p(t)$ the Hilbert polynomial of $S/J$.
\end{theorem}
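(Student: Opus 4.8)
The plan is to prove that the map $I\mapsto I^{\sat}$ is injective on $\Mf(J)$: since a closed subscheme of $\PP^n$ is uniquely determined by its saturated homogeneous ideal, and since the Hilbert polynomial of $S/I^{\sat}$ coincides with that of $S/I$, which by Proposition~\ref{cor1} equals the Hilbert polynomial $p(t)$ of $S/J$, this is exactly what the statement asks for. Injectivity will follow from the stronger fact that \emph{every $I\in\Mf(J)$ is $m$-saturated}, i.e. $(I^{\sat})_t=I_t$ for all $t\geq m$, equivalently $I=(I^{\sat})_{\geq m}$; indeed, granting this, $\mathfrak a^{\sat}=\mathfrak b^{\sat}$ forces $\mathfrak a=(\mathfrak a^{\sat})_{\geq m}=(\mathfrak b^{\sat})_{\geq m}=\mathfrak b$.

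The core step is the following claim: if $h\in S_{\geq m}$ is homogeneous and $x_0^{t}h\in I$ for some $t\in\mathbb N$, then $h\in I$. To see this, recall that $I\in\Mf(J)$ means $\cN(J)$ is a $K$-basis of $S/I$, so every homogeneous polynomial admits a unique $J$-reduced form modulo $I$, and the $J$-normal form of a polynomial of $I$ is $0$ (Proposition~\ref{cor1}); hence $\Nf(x_0^{t}h)=0$. On the other hand, since $\deg h\geq m$, Lemma~\ref{proptagliati}(\ref{taglvi}) gives $\Nf(x_0^{t}h)=x_0^{t}\,\Nf(h)$. Therefore $x_0^{t}\,\Nf(h)=0$ in the integral domain $S$, so $\Nf(h)=0$, and thus $h=h-\Nf(h)\in I$.

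From the claim, $m$-saturation is immediate: for $t\geq m$ and $f\in(I^{\sat})_t$ there is, by the definition of saturation applied with the variable $x_0$, an $r$ with $x_0^{r}f\in I$; since $\deg f=t\geq m$, the claim yields $f\in I$, while the reverse inclusion $I\subseteq I^{\sat}$ is automatic, so $(I^{\sat})_t=I_t$. This completes the argument. The only delicate point is this $m$-saturation claim, and it rests entirely on Lemma~\ref{proptagliati}(\ref{taglvi}) — the commutation of multiplication by a power of $x_0$ with the passage to $J$-normal forms in degrees $\geq m$ — together with the fact that $S$ has no zero divisors; one only has to be careful to formulate saturation through the single variable $x_0$, which is precisely the variable for which that lemma is available.
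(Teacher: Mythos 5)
Your proof is correct, and it takes a route that is organized differently from the paper's, although it runs on the same two pillars. The paper argues by contradiction: if $\mathfrak a\neq\mathfrak b$ in $\Mf(J)$ defined the same subscheme of $\PP^n$, then $\mathfrak a_r=\mathfrak b_r$ for $r\gg 0$; choosing $x^\alpha\in B_J$ where the two $J$-marked bases differ, the polynomial obtained by multiplying $f_\alpha^{\mathfrak a}-f_\alpha^{\mathfrak b}$ by a suitable power of $x_0$ is a nonzero element of $\mathfrak a$ which is $J$-reduced by Lemma \ref{proptagliati}, (\ref{tagliii}), contradicting Proposition \ref{cor1}, (\ref{it:cor1_iv}). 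You instead prove the structural statement that every $I\in\Mf(J)$ is $m$-saturated: from $x_0^t h\in I$ with $\deg h\geq m$ you deduce $h\in I$ via Lemma \ref{proptagliati}, (\ref{taglvi}), together with the fact that elements of $I$ have null normal form, and then recover $I$ as $(I^{\sat})_{\geq m}$, so that $I\mapsto I^{\sat}$, hence $I\mapsto \Proj(S/I)$, is injective. Since part (\ref{taglvi}) is itself proved from part (\ref{tagliii}) and the uniqueness of $J$-reduced forms, the ingredients coincide; what your packaging buys is an explicit reusable fact, namely $I=(I^{\sat})_{\geq m}$ for every $I\in\Mf(J)$, which the paper never states and which is in the same spirit as the later comparison of the truncations $\underline J_{\geq m}$, while the paper's argument is shorter and works directly with the marked bases. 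Two points you leave implicit are harmless but worth recording: $I_t=0$ for all $t<m$ (immediate from Proposition \ref{cor1}, (\ref{it:cor1_iii}), since $J_t=0$ in those degrees), which is what upgrades the equality $(I^{\sat})_t=I_t$ for $t\geq m$ to $I=(I^{\sat})_{\geq m}$; and the standard bijection between saturated homogeneous ideals and closed subschemes of $\PP^n$, which both you and the paper invoke tacitly when passing from equality of subschemes to equality of saturations.
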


\begin{proof}

By the uniqueness of the reduced form, there is a monomial $x^\alpha\in B_{J}$ such that the corresponding polynomials $f_\alpha^{\mathfrak a}$ and $f_\alpha^{\mathfrak b}$ of the $J$-marked bases of $\mathfrak a$ and $\mathfrak b$, respectively, are different and moreover such that $f_\alpha^{\mathfrak a}\not\in \mathfrak b$ and $f_\alpha^{\mathfrak b}\not\in {\mathfrak a}$.
If $\mathfrak a$ and $\mathfrak b$ defined the same projective scheme, we would have $\mathfrak a_r=\mathfrak b_r$ for some $r\gg 0$. Hence $x_0^{r-m} f_\alpha^{\mathfrak a} - x_0^{r-m} f_\beta^{\mathfrak b}=x_0^{r-m} (-T(f_\alpha^{\mathfrak a}) +T( f_\beta^{\mathfrak b}))$   is a non-zero polynomial that belongs (for instance) to $\mathfrak a$. Moreover,  due to Lemma \ref{proptagliati}, (\ref{tagliii}), $x_0^{r-m} (-T(f_\alpha^{\mathfrak a}) +T( f_\beta^{\mathfrak b}))$   is  $J$-reduced modulo $\mathfrak a$: this is impossible  because  of Proposition \ref{cor1}, (\ref{it:cor1_iv}).
\end{proof}

The following example shows that, if $J$ is a  strongly stable ideal but not an $m$-truncation, different ideals in $\Mf(J)$ may define the same subscheme in $\PP^n$. This is the first reason why we will focus mainly on strongly stable $m$-truncations.

\begin{example}\label{no one-to-one}
 In the ring $S=K[x_0,x_1,x_2]$, let us consider the strongly stable ideal $J=(x_2,x_1^2,$ $x_1x_0)$  and for every $c\in K$ the ideal $\mathfrak a_c=(x_2+cx_1,x_1^2,x_1x_0)$. An easy computation shows that the ideals $\mathfrak a_c$ belong to $\Mf(J)$ and are pairwise different. However, $x_2^2$, $x_2x_1$, $x_2x_0$ belong to $\mathfrak a_c$: indeed $x_2^2=(x_2+cx_1)(x_2-cx_1)+c^2x_1^2, \ x_2x_1=(x_2+cx_1)x_1-cx_1^2,\ x_2x_0=(x_2+cx_1)x_0-cx_1x_0$; hence the saturation of $ \mathfrak a_c$ is $J$. Then, the subschemes $\Proj(S/\mathfrak a_c)$ of $\PP^2$ coincide. We can observe that  the difference between the ideals $\mathfrak a_c$ disappears if we only consider their homogeneous components of degree $\geq 2$.
\end{example}

\subsection{Superminimals}

In the following we will use the notation stated in Definition \ref{satmon}.

\begin{definition}\label{defsupermin}
Let $J$ be a strongly stable ideal. The \emph{set of superminimal generators} of $J$ is
\[
sB_{J}=\{x^{\beta} \in B_{J}   \ \vert\  x^{\underline{\beta}}  \in B_{\underline J}\}.
\]
\end{definition}

\begin{remark}\label{BorelGens}
Another special set of monomials for a strongly stable ideal $J$ is the so-called set of \emph{Borel generators} (see \cite{BorGen}), namely the smallest subset of $B_{J}$ such that $J$ is the minimum strongly stable ideal containing them. Although there is a clear analogy between the ideas underlying the  definition of superminimal generators and  that of  Borel generators, however they do not coincide in general.
\end{remark}

\begin{example}
Consider $\underline J:=(x_2^3,x_2^2x_1,x_2x_1^2,x_1^6)\subseteq K[x_0,x_1,x_2]$ and its $5$-truncation ideal $J := \underline{J}_{\geqslant 5}$. The set of superminimal generators of $J$ is $sB_J=\{x_2^3x_0^2,x_2^2x_1x_0^2,x_2x_1^2x_0^2,x_1^6\}$, while the set of Borel generators of $J$ is $\{x_2x_1^2x_0^2,x_1^6\}$,  because $x_2x_1^2x_0^2 \in J$ imposes $x_2^2x_1x_0^2 = \mathrm{e}^{+}_{1,2}(x_2x_1^2x_0^2) \in J$ and $x_2^3x_0^2 = \mathrm{e}^{+}_{1,2}\circ\mathrm{e}^{+}_{1,2}(x_2x_1^2x_0^2) \in J$.
\end{example}

\begin{example}
Consider $J:=(x_2^2,x_2x_1^2,x_2x_1x_0,x_2x_0^2)\subseteq K[x_0,x_1,x_2]$ whose saturation is $\underline J=(x_2)$. The set of superminimal generators of $J$ is $sB_J=\{x_2x_0^2\}$, while the set of Borel generators of $J$ is $\{x_2^2,x_2x_0^2\}$.
\end{example}

\begin{definition}\label{superminimalbasis}
Let $J$ be a strongly stable ideal. A finite set of marked polynomials $f_\beta=x^\beta-\sum c_{\beta\gamma}x^\gamma$, with $\Ht(f_\beta)=x^\beta$,  is a \emph{$J$-marked superminimal set} if the head terms form the set of superminimal generators $sB_{J}$ of $J$, they are pairwise different,  and $x^\gamma \in \cN(J)$. We call \emph{tail} of $f_\beta$ the homogeneous polynomial $T(f_\beta):=x^\beta -f_\beta$.

Every $J$-marked set $G$ contains a (unique) subset $sG$ of this type, that is called \textit{the set of superminimals of} $G$; if   $G$ is a $J$-marked basis, $sG$ is called \emph{$J$-superminimal basis}.
\end{definition}

\begin{remark}  If $\Gamma$ is a $J$-marked superminimal set,  it can always be completed to a (non-unique) $J$-marked set $G$. For instance $G=\Gamma \cup (B_{J}\setminus sB_{J})$.

On the other hand, if $I\in \Mf(J)$, then its $J$-superminimal basis is the only $J$-marked superminimal set contained in $I$. In fact, for every $x^\beta \in sB_{J}$, if $f_\beta$   belongs to both $I$ and a $J$-marked superminimal set, then $x^\beta-f_\beta$ has to be a $J$-reduced form of $x^\beta$ modulo $I$, which is the unique normal form $\Nf(x^\beta)$ modulo $I$.
\end{remark}

\begin{definition}\label{sminred} Consider  a strongly stable ideal $J$, a $J$-marked set $G$ {{and two polynomials $h$ and $h_1$. We say that $h$ is in {\em $sG_\ast$-relation} with $h_1$ if there is a monomial $x^\gamma \in \Supp(h)\cap J$, $c=\Coeff(x^\gamma)$, such that $x^\gamma$ is divisible by a superminimal generator $x^\alpha$ of $J$, {\em with} $x^\gamma=x^{\underline\alpha} \ast_{\underline J} x^\eta=x^{\alpha}\cdot x^\epsilon $ and $h_1=h-c\cdot x^\epsilon f_\alpha$, that is $h_1$ is obtained by replacing the monomial $x^\gamma$ in $h$ by $x^\epsilon \cdot T(f_{\alpha})$}}.
We call \emph{superminimal reduction} the transitive closure of the above relation and denote it by $\xrightarrow{\ sG \ast \  }$. Moreover, we say that:
\begin{itemize}
\item[-] \emph{$h$ can be reduced to $h_1$} by $\SGred$ if $h\SGred h_1$ ;
\item[-]  \emph{$h$ is non-reducible w.r.t. $\SGred$} if  no step of reduction on $h$ by $\SGred$ can be performed;
\item[-] \emph{$h$ is strongly reduced} if for every $t$, $x_0^t\cdot h$ is non-reducible w.r.t. $\SGred$.
\end{itemize}
\end{definition}

\begin{remark}\label{ultimora}\

\begin{enumerate}[(i)]
\item We use the notation $\SGred$ to underline that this reduction also involves the decomposition $\ast_{\underline J}$ of Definition \ref{stellina} and it is not the usual polynomial reduction w.r.t. a set of marked polynomials $sG$. Indeed even if a  polynomial $h$ is non-reducible  w.r.t. $\SGred$, its support can contain some monomial which is multiple of a monomial in $sB_{J}$ (see Example \ref{esempirid}); hence $h$ would be reducible w.r.t. the usual reduction $\xrightarrow{\ sG\ }$.
\item A homogeneous polynomial $h$ is strongly reduced if and only if no monomial  in $\Supp(h)$ is divisible by a monomial of $B_{\underline J}$, that is $h$ is $\underline J$-reduced.  In fact, if $x^\gamma \in \Supp(h)\cap \underline J$
then $x^\gamma=x^{\underline\alpha} \ast_{\underline J} x^\eta$ and there is $t$ such that $x^\alpha=x_0^tx^{\underline\alpha}\in B_J$. Thus $x_0^th$ can be reduced by $\SGred $ using the polynomial $f_\alpha$.
\item\label{ultimora_iii}  The polynomials $x^\epsilon f_\alpha$   that we use for the reduction procedure $\SGred$ have head terms pairwise different.
Moreover,  if $x^\delta f_{\alpha'}$  is used in the $\SGred$ reduction of   $x^\epsilon T(f_\alpha)$ then $x^{\underline \delta} <_{\texttt Lex} x^{\underline \epsilon} $.
\end{enumerate}
\end{remark}

If we consider a strongly stable ideal $J$ with no further hypothesis,  we cannot generalize the properties of the reduction $\xrightarrow{\ V_\ell}$ to $\SGred $, as shown in the following example.

\begin{example}\label{no ssr}
 In the ring $S=K[x_0,x_1,x_2]$ (with $x_2>x_1>x_0$) let us consider the  strongly stable ideal $  {J}=(x_2^3,x_2^2x_1,x_2x_1^2,x_2^2x_0,x_2x_1x_0,x_1^4,x_1^3x_0,x_1^2x_0^2)$ and its saturation $\underline J=(x_2^2,x_2x_1,x_1^2)$. The set of superminimals of $J$ is $sB_{{J}}=\{x_2^2x_0,  x_2x_1x_0,x_1^2x_0^2\}$. Let us consider the  $J$-marked superminimal set
  $sG=\{f_{x_2^2x_0}=x_2^2x_0, \ f_{x_2x_1x_0}=x_2x_1x_0-x_1^3,\ f_{x_1^2x_0^2}=x_1^2x_0^2-x_2x_0^3\}.$   The superminimal reduction  w.r.t. $sG$ is not Noetherian. For instance:
\[
 x_1^3x_0^2 {\SGred }  T(f_{x_1^2x_0^2})\cdot x_1=x_2x_0^3x_1  {\SGred } T(f_{x_2x_1x_0})\cdot x_0^2=x_1^3x_0^2.
 \]
\end{example}

However, if we assume that  the  strongly stable ideal $J$ is also an $m$-truncation ideal, then the reduction $\SGred $ turns out to be  Noetherian and satisfies several good properties, similar to the ones of $\xrightarrow{\ V_\ell\ }$.

\begin{theorem}\label{ridsm} Let $J$ be a strongly stable $m$-truncation ideal and $sG$ be a $J$-marked superminimal set. Then:
\begin{enumerate}[(i)]
\item \label{ridsm-i}$\SGred $ is Noetherian.
\item \label{ridsm-ii} For every homogeneous polynomial $h$ there exist $t$
and a unique  polynomial $ h(t)$ strongly reduced such that  $x_0^t\cdot h\SGred   h(t)$. If  $\overline{t}$ is the minimum one and  $\bar h:=h(\overline{t})$, then  $ h(t)=x_0^{t-\bar t}\cdot \bar h$ for every  $t\geq \bar t$. There is an effective procedure that computes $\bar t$ and $\bar h$.
\end{enumerate}
 If moreover $sG$ is the superminimal basis of an ideal $I$ of $\Mf(J)$, then:
\begin{enumerate}[(i)]\setcounter{enumi}{2}
\item \label{ridsm-iii} $\SGred $ computes the $J$-normal forms modulo $I$. More precisely, for every homogeneous polynomial $h$:
\[
\Nf(h)=\begin{cases}
h, &\text{ if } \deg(h) < m\\
\bar h / x_0^{\bar t}, & \text{ if } \deg(h) \geq m  \text{ and } x_0^{\overline{t}}\cdot h\SGred  {\bar h}
\end{cases}
\]
\item  \label{ridsm-iv}$\SGred$ solves the ideal-membership problem for $I$: for every homogeneous polynomial $h$:
\[ h\in I \Leftrightarrow \deg(h)\geq m \text{ and } x_0^{\overline{t}}\cdot h\SGred  0\]
\item \label{ridsm-v}There is a one-to-one correspondence between ideals in $\Mf(J)$ and $J$-superminimal bases.
\end{enumerate}
\end{theorem}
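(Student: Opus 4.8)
The plan is to prove the five statements essentially in the order listed, since each one feeds the next, and the whole argument hinges on carefully exploiting the truncation hypothesis via Lemma \ref{proptagliati}. For \textbf{(i)} (Noetherianity of $\SGred$), the natural strategy is to find a well-founded quantity that strictly decreases at each reduction step $h \SGred h_1$ where the monomial $x^\gamma = x^\alpha \cdot x^\epsilon$ is replaced by $x^\epsilon T(f_\alpha)$. The key observation is Remark \ref{ultimora} (\ref{ultimora_iii}): the multiplier $x^\delta$ used when further reducing $x^\epsilon T(f_\alpha)$ satisfies $x^{\underline\delta} <_{\mathtt{Lex}} x^{\underline\epsilon}$. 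So the candidate is a multiset of $\mathtt{Lex}$-values of the "$x_0$-free parts" $x^{\underline\epsilon}$ of the multipliers, ordered by the Dershowitz--Manna multiset order on $(\mathtt{Lex}, \text{monomials modulo }x_0)$; one has to check that each single superminimal reduction step replaces one such value by strictly smaller ones, using Lemma \ref{descLex} to control how the tails interact with further reductions. The degree stays fixed (everything is homogeneous of the same degree) so the multiset is finite, and the order on a fixed degree is well-founded.

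For \textbf{(ii)}, existence of a strongly reduced $h(t)$ for some $t$ follows from (i) together with Remark \ref{ultimora} (ii): a non-reducible polynomial of degree $\geq m$ need not be $\underline J$-reduced, but multiplying by a suitable power of $x_0$ makes every offending monomial $x^\gamma \in \underline J$ into a monomial of $J$, hence reducible; iterating and using Noetherianity gives a strongly reduced (equivalently, by Lemma \ref{proptagliati} (\ref{taglv}), $\underline J$-reduced) output $h(t)$. Uniqueness of $h(t)$ for fixed $t$ needs an argument that the reduction is confluent on $x_0^t h$: here I would either invoke a Newman-type lemma (local confluence $+$ Noetherianity) or, more directly, compare any two reduction sequences using the $\mathtt{Lex}$-leading control of Lemma \ref{reduction2}/\ref{descLex}. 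The stabilization $h(t) = x_0^{t-\bar t}\bar h$ for $t \geq \bar t$ then follows because $\SGred$ commutes with multiplication by $x_0$ on polynomials of degree $\geq m$ (an avatar of Lemma \ref{proptagliati} (\ref{tagliii}) and (\ref{taglvi})): the reduction steps available for $x_0^t h$ are exactly $x_0^{t-\bar t}$ times those for $x_0^{\bar t} h$ once the latter is strongly reduced. Effectivity of computing $\bar t, \bar h$ is then just: run the Noetherian procedure, strip powers of $x_0$, repeat until $\underline J$-reduced; terminate because $\bar t$ is bounded (e.g. by $\reg$-type bounds or simply because the process is finite).

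For \textbf{(iii)}--\textbf{(v)} assume $sG = sG_I$ is the superminimal basis of $I \in \Mf(J)$. The crucial point is that every polynomial $x^\epsilon f_\alpha$ used in a step of $\SGred$ lies in $I$: indeed $f_\alpha \in G \subseteq I$ since $x^\alpha \in sB_J \subseteq B_J$ and $G$ is the $J$-marked basis of $I$. Hence $h - h_1 \in I$ at every step, so $x_0^t h - h(t) \in I$; when $h(t) = \bar h \cdot x_0^{t-\bar t}$ is $\underline J$-reduced, $\bar h \cdot x_0^{t-\bar t}$ is a $J$-reduced form of $x_0^t h$ modulo $I$ (Lemma \ref{proptagliati} (\ref{taglv})), and by uniqueness of the $J$-normal form ($I \in \Mf(J)$, Proposition \ref{cor1}) combined with Lemma \ref{proptagliati} (\ref{taglvi}) we get $\bar h / x_0^{\bar t} = \Nf(h)$; this is (iii). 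Statement (iv) is the special case $\Nf(h) = 0$: $h \in I$ iff its normal form is $0$, and for $\deg h \geq m$ this reads $\bar h = 0$, i.e. $x_0^{\bar t} h \SGred 0$; the degree $\geq m$ condition is forced because $I = J_{\geq m}$-type ideals contain nothing in degree $< m$ beyond what $\cN(J)$ already accounts for — more precisely $I_t = 0$ would be wrong, rather $S/I$ has $\cN(J)$ as basis and $\cN(J)_t = S_t$ for $t < m_J$, so a nonzero homogeneous $h$ of degree $< m$ lies in $I$ only if it is $J$-reduced and in $I$, hence $0$ by Proposition \ref{cor1} (\ref{it:cor1_iv}); I should phrase this carefully. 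Finally (v): the map $I \mapsto sG_I$ is well-defined and injective because distinct $I, I'$ have distinct $J$-marked bases (Remark \ref{Jmarkedbasis} (\ref{rk:Jmarkedbasis_i})), and if they had the same superminimal basis then by (iv) they would solve the same ideal-membership problem, forcing $I = I'$; surjectivity onto the set of $J$-superminimal bases is by definition (every $J$-superminimal basis is $sG$ for its generated-completion's marked basis, provided that completion lies in $\Mf(J)$ — one shows the marked-basis condition is detected by $\SGred$, closing the loop).

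\textbf{Main obstacle.} The delicate step is the \textbf{Noetherianity in (i)} together with the \textbf{confluence needed for uniqueness in (ii)}: unlike the classical $\xrightarrow{V_\ell}$, the superminimal reduction can leave $B_{\underline J}$-multiples in the support (Remark \ref{ultimora} (i)), so the termination certificate must be built from the $x_0$-deleted exponents $x^{\underline\epsilon}$ rather than from a term order on $S_\ell$ itself, and one must verify that Remark \ref{ultimora} (\ref{ultimora_iii}) — which ultimately rests on Lemma \ref{descLex} (ii), the inequality $x^{\underline\eta} <_{\mathtt{Lex}} x^{\underline\delta}$ — really does bound \emph{all} subsequent multipliers and not just the immediate ones. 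Once the right multiset-valued invariant is identified, the rest is bookkeeping.
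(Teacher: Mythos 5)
Your plan matches the paper's proof in all essentials: termination and uniqueness rest on the strict $\mathtt{Lex}$-descent of the $x_0$-stripped multipliers (Lemma \ref{descLex} and Remark \ref{ultimora} (\ref{ultimora_iii})), your ``direct comparison'' alternative for uniqueness is exactly the paper's cancellation-of-maximal-head-term argument (so the Newman-lemma route, which would require proving local confluence, is not needed), and (iii)--(v) are deduced from Lemma \ref{proptagliati} and Proposition \ref{cor1} just as in the paper. Your Dershowitz--Manna multiset invariant for (i) is merely a more explicit formalization of the paper's terse ``otherwise there would be an infinite $<_{\mathtt{Lex}}$-descending chain of monomials'' argument.
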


\begin{proof}\

\begin{enumerate}[(i)]
\item Since $J$ is a strongly stable and $m$-truncation ideal, then $\cN(J)_{\geq m} = \cN(\underline J)_{\geq m}$ (Lemma \ref{proptagliati}, (\ref{tagliv})). If  $\SGred $ was not Noetherian, by Lemma \ref{descLex} applied to $\underline J$, we would be able to find infinite descending chains of monomials w.r.t. $<_{\mathtt{Lex}}$.

\item\label{tagldimii} It is sufficient to prove the thesis for monomials $x^\gamma$ in $J$. Let $x^\gamma=x^{\underline\alpha}\ast_{\underline J} x^\eta$. If $x^\eta=1$, then $x^{\alpha}= x_0^{t_\alpha}\cdot x^{\underline\alpha}$ is in $sB_{J}$, $f_{\alpha}$ belongs to $sG$ and $x_0^{t_\alpha} \cdot x^{\underline\alpha} \SGred T(f_{\alpha})$, where $\Supp(T(f_{\alpha}))\subseteq \cN(J)$. In this case $\overline h=T(f_{\alpha})$ and $\overline t=t_\alpha$. If $x^\eta\neq 1$, we can assume that  the thesis holds for any monomial $x^{\gamma'}=x^{\underline\beta}\ast_{\underline J} x^{\eta'}$, such that $x^{\eta'}<_{\mathtt{Lex}} x^{\eta}$.

We perform a first reduction $x_0^{t_{\alpha}} \cdot x^\gamma \SGred   x^\eta\cdot T(f_\alpha)$. If $x^\eta\cdot T(f_\alpha)$ is strongly reduced, we are done. Otherwise, we have $x^\eta \neq x_0^{\vert \eta \vert}$. For every monomial  $x^{\gamma'}\in \Supp(x^\eta\cdot T(f_\alpha))\cap \underline J$ we have $x^{\gamma'}=x^{\underline\beta'}\ast_{\underline J} x^{\eta'}$, with  $x^{\eta'}<_{\mathtt{Lex}}x^\eta$ by Lemma \ref{descLex}.
So, we have also $x_0^t  \cdot x^{\eta'} <_{\mathtt{Lex}} x^\eta$, for every $t$. By the inductive hypothesis we can find a suitable power $t$ of $x_0$ such that every monomial in $x_0^t\cdot x^\eta \cdot T(f_\alpha)$ can be reduced by $ \SGred $ to a strongly reduced polynomial.


 It remains to prove the uniqueness of  the strongly reduced polynomial $h(t)$. Let us consider two different  strongly reduced $ \SGred$ reductions  of $x_0^t h$:   their difference  is again strongly reduced and can be written as  $\Sigma a_i x^{\eta_i}f_{\alpha_i}$ with  $a_i\in K, \ a_i\neq 0$ and $x^{\eta_i}f_{\alpha_i}$ pairwise different. Let  $ x^{\eta_1}f_{\alpha_1}$ be such that  for every $i\geq 2$, either  $x^{\eta_1}>  _{\texttt Lex}x^{\eta_i}$ or $x^{\eta_1} = x^{\eta_i}$ and  $x^{\alpha_1}>_{\texttt Lex}x^{\alpha_i}$. Then   $ x^{\eta_1}x^{\alpha_1}$ should cancel with a monomial in $\Supp(x^{\eta_i}T(f_{\alpha_i}))$ for some $i$, but this is impossible as observed in Remark \ref{ultimora}, (\ref{ultimora_iii}).

Observe that, though for a fixed $x^{\gamma}=x^{\underline\alpha}\ast_{J} x^{\eta}$,  there are infinitely many monomials $x^{\gamma'}=x^{\underline\beta}\ast_{J} x^{\eta'}$ such that $x^{\eta'}<_{\mathtt{Lex}} x^{\eta}$, we use the inductive hypothesis  only with respect to the finite number of them that  appear on the support of  $\cdot x^\eta \cdot T(f_\alpha)$. For this reason our procedure is effective.
\end{enumerate}

From now on we consider $I\in \Mf(J)$;  therefore   if $h$ is a homogeneous polynomial and $ h\SGred  h_1$ with $h_1$  strongly reduced,  then by uniqueness of $J$-normal forms modulo $I$ we have $h_1=\Nf(h)$.
\begin{enumerate}[(i)]\setcounter{enumi}{2}
\item If $\deg h < m$ we are done. Otherwise from (\ref{tagldimii}) we have that $x_0^{\overline{t}}\cdot h\SGred  {\overline{h}}$  and $\bar h$ is a $J$-reduced form modulo $I$.  Thus  $x_0^{t} \cdot \Nf(h)$ is $J$-reduced  too (Lemma \ref{proptagliati}, (\ref{tagliii})) and  we get the desired equality by uniqueness of $J$-normal forms modulo $I$.

\item   This is a consequence of (\ref{ridsm-iii}) and of Proposition \ref{cor1} (\ref{it:cor1_iv}).

\item  This is the straightforward consequence of (\ref{ridsm-iv}).\qedhere
\end{enumerate}
\end{proof}

Whenever $J$ is a strongly stable $m$-truncation ideal and $sG$ is the superminimal basis of an ideal $I\in \Mf(J)$, then     $sG$ is a subset of the set $V$ of Definition \ref{def:$V_m$}. Nevertheless,
it is interesting to notice that not every step of reduction by $\SGred $ is also a step of reduction by $\xrightarrow{\ V_\ell\ }$, as shown in the following example.

\begin{example} \label{esempirid}
Consider $J=(x_1^2,x_0x_2,x_1x_2,x_2^2)$ which is a strongly stable ideal  and a $2$-truncation of $\underline J=(x_2,x_1^2)$ in $K[x_0,x_1,x_2]$. Let $G$ be a $J$-marked set.
\begin{itemize}
  \item The monomial $x_2\cdot x_1^2$ is non-reducible w.r.t. $sG$, because the only monomial of $sB_{J}$ dividing it is $x_1^2$, but $x_2x_1^2=x_2\ast_{\underline J}x_1^2$. On the other hand, $x_2x_1^2=x_2x_1\ast_J x_1$, so $x_2x_1^2\xrightarrow{\ V_3\ }x_1 T(f)$ where $f\in V_2$, $\Ht(f)=x_2x_1$.
 \item The only way to reduce $x_0\cdot x_2^2$ via $\xrightarrow{\ V_3\ }$ leads to $x_0\cdot T(f')$, where $f'$ is the unique polynomial of $V_2$ such that $\Ht(f')=x_2^2$. Moreover, $x_0\cdot T(f')$ is not further reducible, because all the monomials of its support belong to $\cN(J)$. On the other hand, according to Definition \ref{sminred}, a first step of reduction of the monomial $x_0\cdot x_2^2$ via  $\SGred $ is $x_0 x_2^2\SGred x_2\cdot T(f'')$, where $f''$ is the polynomial in $sG$ with $\Ht(f'')=x_0\cdot x_2$. Since $x_2$ is a monomial of $B_{\underline J}$, every monomial appearing in $\Supp(x_2\cdot T(f''))$ belongs to $J$, and so we will need further steps of reduction via $\SGred $ to compute a polynomial non-reducible w.r.t. $sG$.
 \end{itemize}
\end{example}




\section{Buchberger-like criterion by superminimal reduction}\label{buchsec}

In the present and following sections,  we assume that {\em $J\subseteq S$ is a strongly stable $m$-truncation ideal}, in order to apply the main results of Section \ref{secsupermin}, mainly those concerning the new reduction process $\SGred $ (Theorem \ref{ridsm}).
We will also use the sets of polynomials $V$ and $W$  which are defined from a $J$-marked set $G$ (see Definition \ref{def:$V_m$}), and the reduction relation $\xrightarrow{\ V_\ell}$.

In Section \ref{buch}, we proved that $J$-marked bases are characterized by a Buchber\-ger-like criterion on the reduction of $S$-polynomials between elements of $G$ by $\xrightarrow{\ V_\ell}$ (Theorem \ref{BuchCrit1}). Afterwards, in Section \ref{secsupermin} we showed that every homogeneous ideal $I$ in $\Mf(J)$ is completely determined by its superminimal basis $sG$ and that $J$-normal forms modulo $I$ can be computed using $\SGred $, that is again  using  polynomials in the subset $sG$ of $G$ (Theorem \ref{ridsm}).

Therefore,  it is natural to ask whether one can obtain a Buchberger-like criterion only considering $S$-polynomials among elements in $sG$. Unfortunately, the answer is negative, as clearly shown by Example \ref{nonbastano}. However, we can prove a few variants of the Buchberger-like criterion of Theorem \ref{BuchCrit1}, in which the set of superminimals $sG$ and the superminimal reduction process $\SGred $  replace $G$ and $\xrightarrow{\ V_\ell}$. Using these new criteria in the next section we will be able to obtain sets of equations defining $\Mf(J)$ in a smaller set of variables than those of Subsection \ref{subsec:MfJ_V}.

\begin{example}\label{nonbastano}
We consider the strongly stable $2$-truncation ideal 
\[
J=(x_3^2,x_3x_2,x_3x_1,x_3x_0,x_2^2)\subseteq K[x_0,x_1,x_2,x_3]
\]
 whose saturation is $\underline J=(x_3,x_2^2)$. In this case, $sB_{J}$ contains only two monomials, $x_3x_0$ and $x_2^2$. If $G$ is any $J$-marked set, then $sG=\{f_{x_3x_0},f_{x_2^2}\}$. The unique $S$-polynomial among superminimal elements is
\[
S(f_{x_3x_0},f_{x_2^2})=x_2^2f_{x_3x_0}-x_3x_0f_{x_2^2}=x_3x_0\cdot T(f_{x_2^2})-x_2^2\cdot T(f_{x_3x_0}).
\]
Any monomial appearing in $\Supp(T(f_{x_3x_0}))$ is in $\cN(J)_2=K[x_0,x_1,x_2]_2\setminus\{x_2^2\}$. Then any monomial appearing in $\Supp(x_2^2\cdot T(f_{x_3x_0}))$ is further reduced by $f_{x_2^2}$, obtaining by $\xrightarrow{\ V_4\ }$ or $\SGred $
\[
S(f_{x_3x_0},f_{x_2^2})=x_3x_0\cdot T(f_{x_2^2})-T(f_{x_2^2})\cdot T(f_{x_3x_0})=T(f_{x_2^2})\cdot f_{x_3x_0}\rightarrow 0.
\]
Nevertheless, even if the only $S$-polynomial among superminimal generators reduces to 0, if we consider $sG=\{f_{x_3x_0},f_{x_2^2}\}$ with $f_{x_3x_0}=x_3x_0+x_1^2$ and $f_{x_2^2}=x_2^2$, then for any choice of $f_{x_3^2},f_{x_3x_2},f_{x_3x_1}$, the $S$-polynomial among $f_{x_3x_1}$ and $f_{x_3x_0}$ does not reduce to 0:
\[
S(f_{x_3x_1},f_{x_3x_0})=x_0f_{x_3x_1}-x_1f_{x_3x_0}=\sum_{x^{\alpha_i}\in \cN(J)_2}a_i x^{\alpha_i}x_0-x_1^3.
\]
The monomials $x^{\alpha_i} x_0$ are in $\cN(J)_3$ and are strongly reduced. Furthermore, $x_1^3$ does not appear among monomials $x^{\alpha_i}x_0$, so it is not canceled, and it is strongly reduced too. Therefore, for any choice of coefficients in the tail of $f_{x_3x_1}$, we have an $S$-polynomial which is not reducible to 0, and any $J$-marked set containing $f_{x_3x_0}=x_3x_0+x_1^2$ is not a $J$-marked basis.
\end{example}

\subsection{\texorpdfstring{Buchberger-like criteria via $\SGred $: first variant}{Buchberger-like criteria via sG* reduction: first variant}}\label{subsec:bucsec1}

In this subsection  we prove that the Buchberger-like criterion of Theorem \ref{BuchCrit1} and  Corollary \ref{sizEK} can be rephrased in terms of the reduction process $\SGred $ . The involved $S$-polynomials will be all those between elements in $G$ (Theorem \ref{nostrobuch}), or only $EK$-polynomials between elements of $G$ (Corollary \ref{nostrobuchEK}). We will need a few lemmas.

\begin{lemma}\label{x0V}
 Let $J$ be a  strongly stable $m$-truncation ideal, $G$ be a $J$-marked set and $h$ be a homogeneous polynomial of degree $\ell\geq m$. Then:
\[
h\in \langle V_\ell\rangle \Leftrightarrow x_0\cdot h\in \langle V_{\ell+1} \rangle.
\]
\end{lemma}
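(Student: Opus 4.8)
The plan is to prove the equivalence by analyzing how the reduction relation $\xrightarrow{\ V_\ell\ }$ and multiplication by $x_0$ interact. The implication ``$\Rightarrow$'' is the easy direction: if $h \in \langle V_\ell \rangle$, write $h = \sum a_i\, x^{\delta_i} f_{\alpha_i}$ with $x^{\delta_i} f_{\alpha_i} \in V_\ell$; then $x_0 \cdot h = \sum a_i\, x_0 x^{\delta_i} f_{\alpha_i}$, and since $\max(x_0 x^{\delta_i}) = \max(x^{\delta_i}) \leq \min(x^{\alpha_i})$ (because $x_0$ is the smallest variable), each $x_0 x^{\delta_i} f_{\alpha_i}$ is again in $V_{\ell+1}$; hence $x_0 \cdot h \in \langle V_{\ell+1}\rangle$. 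This uses only Definition \ref{def:$V_m$} and the fact that prepending $x_0$ cannot raise the maximum of the monomial $x^\delta$.

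For the reverse implication ``$\Leftarrow$'', I would argue by contraposition or, more constructively, via the Noetherian reduction $\xrightarrow{\ V_\ell\ }$ of Proposition \ref{construction of $J$-normal form}. Suppose $x_0 \cdot h \in \langle V_{\ell+1}\rangle$. Reduce $h$ by $\xrightarrow{\ V_\ell\ }$ to a $J$-reduced polynomial $h_0$ (Remark \ref{Jmarkedbasis}(\ref{rk:Jmarkedbasis_iii}) and Proposition \ref{construction of $J$-normal form} guarantee this terminates); then $h - h_0 \in \langle V_\ell\rangle$, and by the already-proved direction $x_0(h - h_0) \in \langle V_{\ell+1}\rangle$. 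Subtracting, $x_0 h_0 \in \langle V_{\ell+1}\rangle$. Now $h_0$ is $J$-reduced of degree $\ell \geq m$, so $\Supp(h_0) \subseteq \cN(J)_\ell = \cN(\underline J)_\ell$ by Lemma \ref{proptagliati}(\ref{tagliv}). The goal becomes to show $x_0 h_0 \in \langle V_{\ell+1}\rangle$ forces $h_0 = 0$, since then $h = h - h_0 \in \langle V_\ell\rangle$. The point is that $\langle V_{\ell+1}\rangle$ reduces to $0$ under $\xrightarrow{\ V_{\ell+1}\ }$ only when we are inside a marked basis; in general $\langle V_{\ell+1}\rangle$ need not be an ideal, but every element of it is a combination $\sum a_i x^{\delta_i} f_{\alpha_i}$ with the $V$-constraint, and I want to extract a contradiction from $h_0$ being nonzero and $J$-reduced.

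The key computational step: if $x_0 h_0 = \sum a_i x^{\delta_i} f_{\alpha_i}$ with all $x^{\delta_i} f_{\alpha_i} \in V_{\ell+1}$ and $h_0 \neq 0$ $J$-reduced, I would look at the monomials $x^{\delta_i} x^{\alpha_i}$ appearing as head terms. Using Lemma \ref{proptagliati}(\ref{tagliii}), a monomial $x_0 x^\beta$ with $x^\beta \in \cN(\underline J)_\ell$ lies in $J$ iff $x^\beta \in J$, which is false; so no head term $x^{\delta_i}x^{\alpha_i}$ of a $V_{\ell+1}$-element can equal $x_0 \cdot (\text{monomial of }\Supp(h_0))$. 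Then, picking the $\succeq_{\ell+1}$-maximal head term among the $x^{\delta_i}x^{\alpha_i}$ (Definition \ref{order2}) — or equivalently the $>_{\tt{Lex}}$-maximal $x^{\delta_i}$ with the relevant tie-break — by Lemma \ref{reduction2}(\ref{reduction2-ii}), (\ref{reduction2-iii}) and Remark \ref{rm:V} this maximal head term cannot be cancelled by any tail contribution $x^{\delta_j} T(f_{\alpha_j})$ and is not in $\Supp(x_0 h_0) \subseteq x_0\cdot\cN(J)$; this contradicts $x_0 h_0$ having that monomial with zero coefficient, forcing the sum to be empty, i.e. $x_0 h_0 = 0$, hence $h_0 = 0$. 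I expect the main obstacle to be making this ``leading head term cannot cancel'' argument fully rigorous in the setting where $\langle V_{\ell+1}\rangle$ is only a vector space and not an ideal — one must be careful that the cancellation analysis among the $x^{\delta_i} f_{\alpha_i}$ genuinely isolates a surviving monomial in $J_{\ell+1}$, which is exactly the mechanism already used in the proof of Corollary \ref{cor:formula}, so I would model the argument on that.
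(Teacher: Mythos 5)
Your proof is correct, but it follows a different route from the paper's. The paper works directly with $x_0\cdot h$: it observes that, since $J$ is an $m$-truncation, every monomial of $\Supp(x_0 h)$ divisible by $x_0$ and lying in $J$ has degree $>m$, hence is not in $B_J$ and its decomposition $x^\alpha\ast_J x^\eta$ has $x^\eta$ divisible by $x_0$; therefore each step of $\xrightarrow{\ V_{\ell+1}\ }$ applied to $x_0 h$ uses a reductor of the form $x_0 x^{\eta'}f_\alpha$ and preserves divisibility by $x_0$, so the whole reduction to $0$ yields $x_0 h = x_0\sum a_i x^{\eta_i}f_{\alpha_i}$ with $x^{\eta_i}f_{\alpha_i}\in V_\ell$, and one simply cancels $x_0$. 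You instead reduce $h$ itself in degree $\ell$ to a $J$-reduced remainder $h_0$, use the easy direction and linearity to get $x_0 h_0\in\langle V_{\ell+1}\rangle$, note via Lemma \ref{proptagliati} (\ref{tagliii})--(\ref{tagliv}) that $x_0 h_0$ is still $J$-reduced, and then invoke the leading-head-term mechanism (pairwise distinct head terms of $V$-elements, Lemma \ref{reduction2} (\ref{reduction2-ii})--(\ref{reduction2-iii})) to conclude that a nonzero $J$-reduced polynomial cannot lie in $\langle V_{\ell+1}\rangle$, forcing $h_0=0$; this is indeed the same mechanism as the uniqueness part of Corollary \ref{cor:formula}, and your cancellation analysis goes through exactly as you sketch, since the $\succeq_{\ell+1}$-maximal $V$-element's head term can only be hit by tails of $\succ$-larger elements. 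Both arguments use the $m$-truncation hypothesis at the decisive point (you to keep $x_0\cdot\cN(J)_\ell$ inside $\cN(J)_{\ell+1}$, the paper to force the cofactor $x^\eta$ to be divisible by $x_0$); the paper's version is more constructive and hands you the explicit expression of $h$ as a combination of $V_\ell$-elements, while yours isolates the linear-algebra content ($J$-reduced elements of $\langle V_{\ell+1}\rangle$ vanish) and avoids tracking $x_0$-divisibility through the reduction. One small inaccuracy in an aside: you say $\langle V_{\ell+1}\rangle$ reduces to $0$ under $\xrightarrow{\ V_{\ell+1}\ }$ ``only when we are inside a marked basis''; in fact every element of the vector space $\langle V_{\ell+1}\rangle$ reduces to $0$ for any $J$-marked set (it is $\langle W_{\ell+1}\rangle=I_{\ell+1}$ for which this characterizes marked bases). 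This remark is not load-bearing, so the proof stands.
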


\begin{proof}
If $h \in \langle V_\ell\rangle$, then $x_0\cdot h\in \langle V_{\ell+1}\rangle $ by definition of $V$.

Vice versa, assume that $x_0\cdot h \in \langle V_{\ell+1}\rangle$. This is equivalent to $x_0\cdot h\xrightarrow{\ V_{\ell+1}\ }0$. Every monomial in $\Supp(x_0\cdot h)$ can be written as $x_0\cdot x^\epsilon$; observe that $x_0\cdot x^\epsilon \notin B_{J}$, because $\deg(x_0\cdot x^\epsilon)>m$, by Lemma \ref{proptagliati}, (\ref{tagli}). Then, if $x_0\cdot x^\epsilon$ belongs to $J$, we can decompose it as $x_0\cdot x^\epsilon =x^\alpha\ast_{J} x^\eta$, $x^\alpha \in B_J$ and $x^\eta\neq 1$. Since $\min(x^\alpha)\geq \max(x^\eta)$, we have that $x^\eta$ is divisible by $x_0$. So $x^\eta=x_0\cdot x^{\eta'}$.

Summing up, in order to reduce the monomial $x_0\cdot x^\epsilon$ of $\Supp(x_0\cdot h)$ using $V$, we use the polynomial $x_0\cdot x^{\eta'}\cdot f_\alpha \in V$,  $\Ht(f_\alpha)=x^\alpha$. If the coefficient of $x_0\cdot x^\epsilon$ in $x_0\cdot h$ is $a$, we obtain
\[
x_0\cdot h\xrightarrow{\ V_{\ell+1}} x_0\cdot(h-a\cdot x^{\eta'}f_\alpha).
\]
At every step of reduction, we obtain a polynomial which is divisible by $x_0$. In particular,
\[
x_0\cdot h \in \langle V_{\ell+1}\rangle \Rightarrow x_0\cdot h=x_0\cdot \sum a_i x^{\eta_i}f_{\alpha_i},\text{ where } x_0\cdot x^{\eta_i}f_{\alpha_i}\in V_{\ell+1}.
\]
Then we have that $h= \sum a_i x^{\eta_i}f_{\alpha_i}$ and $x^{\eta_i}f_{\alpha_i}\in V_{\ell}$, that is $h\in \langle V_{\ell}\rangle$.
\end{proof}

Consider $f_\alpha,f_{\alpha'}\in G$, the $S$-polynomial $S(f_\alpha, f_{\alpha'})=x^\gamma f_\alpha-x^{\gamma'}f_{\alpha'}$ and assume that $x^{\gamma'}<_{\mathtt{Lex}}x^{\gamma}$. By Lemma \ref{reduction2}, (\ref{reduction2-iii}), if $S(f_\alpha,f_\alpha')\xrightarrow{\ V_\ell} h$, then $S(f_\alpha,f_\alpha')-h=\sum a_jx^{\delta_j}f_{\beta_j}$
with $x^{\delta_j}f_{\beta_j} \in V_\ell$, $x^{\delta_j}<_{\mathtt{Lex}}x^\gamma$.
{{Now we show that a similar result holds for the superminimal reduction $\SGred $}}.

\begin{lemma}\label{lexrid}
Let  $J$ be a  strongly stable $m$-truncation ideal, $G$ be a $J$-marked set and  $f_\alpha,f_{\alpha'}$ be two polynomials belonging to $G$. Consider the $S$-polynomial $S(f_\alpha, f_{\alpha'})=x^\gamma f_\alpha-x^{\gamma'}f_{\alpha'}$, with $x^{\gamma'}<_{\mathtt{Lex}}x^{\gamma}$. If $x_0^t\cdot S(f_\alpha,f_\alpha')\SGred  h$, then $x_0^t\cdot S(f_\alpha,f_\alpha')-h=\sum a_jx^{\eta_j}f_{\beta_j}$
with $f_{\beta_j} \in sG$, $x^{\eta_j}<_{\mathtt{Lex}}x^\gamma$ and $x^{\underline{\eta_j}}<_{\mathtt{Lex}}x^{\underline\gamma}$.
\end{lemma}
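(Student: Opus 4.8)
The plan is to track what happens to each monomial during the superminimal reduction, using the key fact that every reduction step by $\SGred $ replaces a monomial $x^\gamma$ of degree $\geq m$ by a polynomial supported on monomials whose ``$x_0$-free part'' is strictly $\mathtt{Lex}$-smaller. First I would set up the induction that drives Theorem \ref{ridsm}, (\ref{ridsm-ii}): namely, a single superminimal step applied to a monomial $x^\gamma = x^{\underline\alpha}\ast_{\underline J}x^\eta$ (with $x^\gamma = x^\alpha\cdot x^\epsilon$ and $f_\alpha\in sG$) replaces $c\cdot x^\gamma$ by $c\cdot x^\epsilon T(f_\alpha)$, and by Lemma \ref{descLex} applied to $\underline J$, every monomial $x^{\gamma'}\in\Supp(x^\epsilon T(f_\alpha))\cap\underline J$ decomposes as $x^{\gamma'} = x^{\underline{\beta'}}\ast_{\underline J}x^{\eta'}$ with $x^{\underline{\eta'}}<_{\mathtt{Lex}}x^{\underline\epsilon}$. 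The point is that $x^{\underline\epsilon} = x^{\underline\gamma}/x^{\underline\alpha}$ is $\mathtt{Lex}$-dominated by $x^{\underline\gamma}$ itself (since $x^{\underline\alpha}$ is a monomial), so each new monomial introduced on the support, when later reduced, will use a superminimal multiplier $x^{\eta'}f_{\beta'}$ with $x^{\underline{\eta'}}<_{\mathtt{Lex}}x^{\underline\gamma}$.

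Next I would argue for the two bounds separately but in parallel. For the bound $x^{\underline{\eta_j}}<_{\mathtt{Lex}}x^{\underline\gamma}$: every monomial in $\Supp(x_0^t\cdot S(f_\alpha,f_{\alpha'})) = \Supp(x_0^t x^\gamma f_\alpha - x_0^t x^{\gamma'}f_{\alpha'})$ is either a multiple of $x^\gamma$ or a multiple of $x^{\gamma'}$ by a monomial in $\cN(J)$ or by $x_0^t$; in all cases its $x_0$-free part is $\leq_{\mathtt{Lex}}x^{\underline\gamma}$ (using $x^{\underline{\gamma'}}\leq_{\mathtt{Lex}}x^{\underline\gamma}$, which follows from $x^{\gamma'}<_{\mathtt{Lex}}x^\gamma$ together with the fact that $x^\gamma,x^{\gamma'}$ have the same degree and the top variable structure of an lcm). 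The first superminimal multiplier used on such a monomial $x^{\underline\delta}\cdot(\text{divisor})$ has $x_0$-free part $\leq_{\mathtt{Lex}}x^{\underline\delta}\leq_{\mathtt{Lex}}x^{\underline\gamma}$, and by the induction-on-$<_{\mathtt{Lex}}$ above, all subsequently used multipliers $x^{\eta_j}f_{\beta_j}$ satisfy $x^{\underline{\eta_j}}<_{\mathtt{Lex}}x^{\underline\gamma}$ (strict, since once we pass to a proper reduction step the new $x_0$-free parts drop strictly). For the bound $x^{\eta_j}<_{\mathtt{Lex}}x^\gamma$: I would note that $x^{\eta_j}$ is $\mathtt{Lex}$-dominated by its $x_0$-padded version's relevant part, and combine the bound $x^{\underline{\eta_j}}\leq_{\mathtt{Lex}}x^{\underline\gamma}$ with the observation that reducing via $\SGred $ never increases the exponent on any variable $x_i$ with $i\geq 1$ beyond what already occurs (the superminimal multiplier $x^\epsilon$ in $x^\gamma = x^\alpha x^\epsilon$ divides $x^{\underline\gamma}$ up to the $x_0$-part), together with Lemma \ref{descLex} which directly gives the strict $\mathtt{Lex}$ drop $x^{\eta'}<_{\mathtt{Lex}}x^\delta\leq_{\mathtt{Lex}}x^\gamma$ on the first reduction step touching a monomial of $\Supp(S(f_\alpha,f_{\alpha'}))$.

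I expect the main obstacle to be the bookkeeping needed to handle the extra powers of $x_0$ introduced by the strong-reduction process: one multiplies $S(f_\alpha,f_{\alpha'})$ by $x_0^t$ precisely to be able to carry out the reduction, and one must check that this padding never spoils the $\mathtt{Lex}$ comparison of the $x_0$-free parts — which is exactly why the statement is phrased in terms of $x^{\underline{\eta_j}}$ versus $x^{\underline\gamma}$ rather than $x^{\eta_j}$ versus $x^\gamma$ alone. The clean way around this is to do the whole argument at the level of $\underline J$ and $x_0$-free parts, invoking Lemma \ref{proptagliati} (especially (\ref{tagliii}) and (\ref{tagliv})) to pass freely between $J$ and $\underline J$ in degrees $\geq m$, and Lemma \ref{descLex}, (ii) to get the strict $\mathtt{Lex}$ decrease of $x_0$-free parts at each step; the induction is then the same one used to prove Noetherianity and uniqueness in Theorem \ref{ridsm}, (\ref{ridsm-ii}), so most of the machinery is already in place. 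The remaining care is just to verify that the \emph{first} step applied to a monomial coming from the $S$-polynomial already produces a strict inequality (so that the final $x^{\eta_j}$ is genuinely $<_{\mathtt{Lex}}$, not merely $\leq_{\mathtt{Lex}}$), which follows because $x^\gamma\in J$ forces a non-trivial decomposition $x^\gamma = x^{\underline\alpha}\ast_{\underline J}x^\eta$ with $x^\eta\neq 1$ unless $x^\gamma\in sB_J$, and in the latter case the reduction lands directly in $\cN(J)$ with multiplier $x^{\eta_j}=1<_{\mathtt{Lex}}x^\gamma$.
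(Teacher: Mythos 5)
Your proposal is correct and takes essentially the same route as the paper's proof, which likewise applies Lemma \ref{proptagliati}, (\ref{tagliv}) and Lemma \ref{descLex} to every monomial of $\Supp(x_0^t\cdot x^\gamma\cdot T(f_\alpha))\cap J$ (and of the $x^{\gamma'}$-side), observing that the resulting Lex-decrease of the $x_0$-free parts of the multipliers persists through all further reduction steps. The only blemish is your closing strictness remark ($x^\gamma$ is the $S$-polynomial multiplier and need not lie in $J$): strictness is already guaranteed at every step by Lemma \ref{descLex}, since each monomial being reduced is a tail monomial of $\cN(J)_{\geq m}=\cN(\underline J)_{\geq m}$ times the current multiplier.
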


\begin{proof}
Every  monomial $x_0^t\cdot x^\gamma\cdot x^\epsilon$ in $\Supp(x_0^t\cdot x^\gamma\cdot T(f_\alpha))\cap J$ decomposes as $x_0^t\cdot x^\gamma\cdot x^\epsilon=x^{\underline\beta}\ast_{\underline J} x^{\eta}$, $x^{\eta}<_{\mathtt{Lex}}x_0^t x^\gamma$ and $x^{\underline\eta}<_{\mathtt{Lex}}x^{\underline\gamma}$ by Lemma \ref{proptagliati}, (\ref{tagliv}) and Lemma \ref{descLex}. The same holds for any further reduction and the same argument applies to monomials appearing in $\Supp(x_0^t\cdot x^{\gamma'}\cdot T(f_{\alpha'}))$.
\end{proof}

We point out that Lemma \ref{lexrid} does not hold without the hypothesis that $J$ is an $m$-truncation ideal, as shown by the following example.

\begin{example}\label{jnontagl}
In $S=K[x_0,x_1,x_2,x_3]$, consider the strongly stable ideal
\[
J=(x_3^2,x_3x_2,x_3x_1)_{\geq 4}+(x_2^2)_{\geq 6},
\]
whose saturation is $\underline J=(x_3^2,x_2x_3,x_1x_3,x_2^2)$. $J$ is not an $m$-truncation for any $m$. Consider a $J$-marked set $G$ and $f_\alpha,f_\beta \in G$ such that $\Ht(f_\alpha)=x_0^2x_2x_3$ and $\Ht(f_\beta)=x_0^2x_1x_3$ and consider $x_2^4\in \Supp(T(f_\beta))$. Then $S(f_\alpha,f_\beta)=x_1f_\alpha-x_2f_\beta$. If we apply Definition \ref{sminred}, we reduce $x_2^4\in \Supp(S(f_\alpha,f_\beta))$ by $\SGred $, pre-multiplying by $x_0^4$. We get that  $x_0^4x_2^4$ belongs to $\Supp(x_0^4S(f_\alpha,f_\beta))$ and $x_0^4x_2^4=x_2^2\ast_{\underline J}x_0^4x_2^2$. But $x_2^2>_{\tt{Lex}} x_2$.
\end{example}

\begin{theorem}\label{nostrobuch}
Let $J$ be a strongly stable $m$-truncation ideal, $G$ be a $J$-marked set and $I$ be the homogeneous ideal generated by $G$. The followings are equivalent:
\begin{enumerate}[(i)]
\item \label{nostrobuch_i}$I\in \Mf(J)$;
\item\label{nostrobuch_ii} $ \forall f_\alpha, f_{\alpha'} \in G,\ \exists \ t$ such that  $x_0^t\cdot S(f_\alpha, f_{\alpha'}) \SGred  0$;
\item \label{nostrobuch_iii}$ \forall f_\alpha, f_{\alpha'} \in G,\ \exists\ t$ such that  $x_0^t\cdot S(f_\alpha, f_{\alpha'})=x_0^t(x^\gamma f_\alpha-x^{\gamma'}f_{\alpha'}) =\sum a_j x^{\eta_j}f_{\alpha_j}$, with $x^{\eta_j}<_{\tt{Lex}}\max_{\tt{Lex}}\{x^\gamma,x^{\gamma'}\}$ and $f_{\alpha_j}\in sG$.
\end{enumerate}
\end{theorem}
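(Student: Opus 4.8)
The plan is to establish the cycle of implications $(\text{\ref{nostrobuch_i}}) \Rightarrow (\text{\ref{nostrobuch_ii}}) \Rightarrow (\text{\ref{nostrobuch_iii}}) \Rightarrow (\text{\ref{nostrobuch_i}})$, leaning heavily on the fact that, since $J$ is a strongly stable $m$-truncation, the superminimal reduction $\SGred$ is Noetherian and computes $J$-normal forms modulo an ideal of $\Mf(J)$ (Theorem \ref{ridsm}), together with the already-proven criterion of Theorem \ref{BuchCrit1}. For $(\text{\ref{nostrobuch_i}}) \Rightarrow (\text{\ref{nostrobuch_ii}})$: if $I\in\Mf(J)$ then $G$ is a $J$-marked basis with superminimal basis $sG$, so $S(f_\alpha,f_{\alpha'})\in I$; by Theorem \ref{ridsm} (\ref{ridsm-iv}), $x_0^t\cdot S(f_\alpha,f_{\alpha'})\SGred 0$ for $t$ large enough (the degree of the $S$-polynomial is already $\geq m$ whenever the head terms are, and in general one multiplies by a further power of $x_0$). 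The implication $(\text{\ref{nostrobuch_ii}}) \Rightarrow (\text{\ref{nostrobuch_iii}})$ is the bookkeeping step: a superminimal reduction to $0$ is by definition a sequence of replacements of a monomial $x^\gamma$ by $x^\epsilon T(f_\alpha)$ with $f_\alpha\in sG$, so tracking the subtracted terms expresses $x_0^t\cdot S(f_\alpha,f_{\alpha'})$ as $\sum a_j x^{\eta_j}f_{\alpha_j}$ with $f_{\alpha_j}\in sG$; the \texttt{Lex}-bound on the $x^{\eta_j}$ is precisely the content of Lemma \ref{lexrid}.

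The substantive implication is $(\text{\ref{nostrobuch_iii}}) \Rightarrow (\text{\ref{nostrobuch_i}})$, and here the strategy is to reduce to the hypothesis of Theorem \ref{BuchCrit1}\,(\ref{buchcrit1-iii}), i.e.\ to show that every ordinary $S$-polynomial $S(f_\alpha,f_{\alpha'})$ reduces to $0$ via $\xrightarrow{\ V_\ell\ }$. First I would observe that a formula $x_0^t\cdot S(f_\alpha,f_{\alpha'}) = \sum a_j x^{\eta_j} f_{\alpha_j}$ with $f_{\alpha_j}\in sG\subseteq V$ and $x^{\eta_j}<_{\mathtt{Lex}}\max_{\mathtt{Lex}}\{x^\gamma,x^{\gamma'}\}$ exhibits $x_0^t\cdot S(f_\alpha,f_{\alpha'})\in\langle V_{\ell+t}\rangle$. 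Then the key tool is Lemma \ref{x0V}: since $S(f_\alpha,f_{\alpha'})$ has degree $\ell\geq m$ (as $\deg f_\alpha\geq m_J=m$ for all $f_\alpha\in G$, because $J$ is generated in degrees $\geq m$ — here one uses that $J$ is an $m$-truncation), applying Lemma \ref{x0V} iteratively $t$ times strips off the powers of $x_0$ and yields $S(f_\alpha,f_{\alpha'})\in\langle V_\ell\rangle$. By the Noetherianity of $\xrightarrow{\ V_\ell\ }$ (Proposition \ref{construction of $J$-normal form}) and Lemma \ref{reduction2}\,(\ref{reduction2-iii}), being in $\langle V_\ell\rangle$ forces $S(f_\alpha,f_{\alpha'})\xrightarrow{\ V_\ell\ }0$, which is exactly condition (\ref{buchcrit1-ii}) of Theorem \ref{BuchCrit1}; hence $I\in\Mf(J)$.

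The main obstacle I anticipate is the clean use of Lemma \ref{x0V} in the last step: one must be sure that \emph{every} intermediate polynomial in the chain has degree $\geq m$ so that the lemma applies at each de-multiplication, and that the passage "$h\in\langle V_\ell\rangle \Rightarrow h\xrightarrow{\ V_\ell\ }0$" is legitimate — this is where Noetherianity of $\xrightarrow{\ V_\ell\ }$ and the fact that the normal form of an element of $\langle V_\ell\rangle$ is unique (so necessarily $0$) are essential. A secondary subtlety is making precise that the existential "$\exists\, t$" can be taken uniformly large enough across all finitely many pairs $f_\alpha,f_{\alpha'}$, but since $G$ is finite this is immediate. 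Everything else — the equivalence $h\in\langle V_\ell\rangle\Leftrightarrow h\xrightarrow{\ V_\ell\ }0$, and the Noetherianity of $\SGred$ — is already available from the earlier sections.
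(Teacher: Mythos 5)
Your implications (\ref{nostrobuch_i})$\Rightarrow$(\ref{nostrobuch_ii}) (via Theorem \ref{ridsm}\,(\ref{ridsm-iv})) and (\ref{nostrobuch_ii})$\Rightarrow$(\ref{nostrobuch_iii}) (bookkeeping plus Lemma \ref{lexrid}) are fine and coincide with the paper's argument. The gap is in (\ref{nostrobuch_iii})$\Rightarrow$(\ref{nostrobuch_i}), at the very first step: you claim that a formula $x_0^t\cdot S(f_\alpha,f_{\alpha'})=\sum a_j x^{\eta_j}f_{\alpha_j}$ with $f_{\alpha_j}\in sG\subseteq V$ ``exhibits'' membership in $\langle V_{\ell+t}\rangle$. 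It does not: the summands $x^{\eta_j}f_{\alpha_j}$ are elements of $W_{\ell+t}$, and they lie in $V_{\ell+t}$ only when $\max(x^{\eta_j})\leq\min(x^{\alpha_j})$, which typically fails for the multiples produced by superminimal reduction (e.g.\ in Example \ref{esempirid} the reduction of $x_0x_2^2$ uses $x_2\cdot f''$ with $\Ht(f'')=x_0x_2$, and $x_2f''\notin V$). Membership in $\langle W\rangle$ is automatic (it is just membership in $I$), and proving $\langle W\rangle\subseteq\langle V\rangle$ is precisely the content of the implication, so your step begs the question; consequently the appeal to Theorem \ref{BuchCrit1} through ``$S(f_\alpha,f_{\alpha'})\in\langle V_\ell\rangle$'' has no basis yet.

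What is missing is the inductive mechanism that turns such decompositions into genuine $\langle V\rangle$-membership. The paper proves $\langle V\rangle=\langle W\rangle$ by showing $x^\eta\cdot V\subseteq\langle V\rangle$ for all monomials $x^\eta$, by induction on $x^\eta$ in $\mathtt{Lex}$ order: the case $\vert\eta\vert>1$ splits as a product of $\mathtt{Lex}$-smaller factors; the case $x^\eta=x_i$, $i\geq 1$, reduces to $x_if_\alpha$ and uses hypothesis (\ref{nostrobuch_iii}) for an EK-type $S$-polynomial $x_if_\alpha-x^{\eta'}f_{\alpha'}$ with $x^{\eta'}<_{\mathtt{Lex}}x_i$; the crucial point is that each summand $x^{\eta_j}f_{\alpha_j}$ (and $x_0^tx^{\eta'}f_{\alpha'}$) has multiplier $\mathtt{Lex}$-smaller than $x_i$, so the \emph{inductive hypothesis} — not membership of $x^{\eta_j}f_{\alpha_j}$ in $V$ — places it in $\langle V\rangle$; only then does Lemma \ref{x0V} strip the factor $x_0^t$ to conclude $x_if_\alpha\in\langle V\rangle$. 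Your use of Lemma \ref{x0V} and your justification that an element of $\langle V_\ell\rangle$ reduces to $0$ via $\xrightarrow{\ V_\ell\ }$ are correct, but without the $\mathtt{Lex}$-induction on the multipliers the argument does not get off the ground, and this induction is the heart of the paper's proof (it cannot be outsourced wholesale to Theorem \ref{BuchCrit1}, whose hypothesis requires the summands themselves to lie in $V_\ell$).
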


\begin{proof}
If $I\in \Mf(J)$, we can apply Theorem \ref{ridsm}, (\ref{ridsm-iv}) because any $S$-polynomial among elements in $G$ belongs to $I$.

If statement (\ref{nostrobuch_ii}) holds, then we get (\ref{nostrobuch_iii}) by Lemma \ref{lexrid}.

We now assume that statement (\ref{nostrobuch_iii}) holds and by Lemma \ref{forseteo} it is sufficient to prove that $\langle V\rangle=\langle W\rangle$ using an argument analogous to that applied in the proof of Theorem \ref{BuchCrit1}. It is sufficient to prove that $x^\eta\cdot V\subseteq \langle V\rangle$, for every monomial $x^\eta$. We proceed by induction on the monomials $x^\eta$, ordered according to $>_{\tt{Lex}}$. The thesis is obviously true for $x^\eta=1$. We then assume that the thesis holds for any monomial $x^{\eta'}$ such that $x^{\eta'}<_{\tt{Lex}}x^\eta$.

If $\vert\eta\vert> 1$, we can consider any product $x^{\eta}=x^{\eta_1}\cdot x^{\eta_2}$, $x^{\eta_1}$ and $x^{\eta_2}$ non-constant. Since $x^{\eta_i}<_{\texttt{Lex}}x^\eta, i=1,2$, we immediately obtain by induction
\[x^\eta\cdot V = x^{\eta_1}\cdot (x^{\eta_2}\cdot V)\subseteq x^{\eta_1}\langle V\rangle\subseteq\langle V\rangle.\]

If $\vert \eta\vert=1$, then we need to prove that $x_i\cdot V\subseteq\langle V\rangle$. Since $x_0 V\subseteq  V$, it is then sufficient to prove the thesis for $x^\eta=x_i$, $i\geq 1$, assuming that the thesis holds for every $x^{\eta'}<_{\tt{Lex}} x_i$. We consider $g_\beta=x^\delta f_\alpha \in V$, where $\max(x^\delta)\leq \min(x^\alpha)$. If $x_ig_\beta$ does not belong to $V$, then $\max(x_i\cdot x^\delta)>\min(x^\alpha)$, so $x_i>\min(x^\alpha)$ because $\max(x^\delta)\leq \min(x^\alpha)$ by construction. In particular, $x_i>\min(x^\alpha)\geq \max(x^\delta)$, so $x_i>_\texttt{Lex}x^\delta$ and it is sufficient to prove the thesis for $x_i f_\alpha$.

We consider an $S$-polynomial $S(f_\alpha,f_{\alpha'})=x_i f_\alpha-x^\gamma f_{\alpha'}$ such that $x^\gamma<_\texttt{Lex}x_i$. Such $S$-polynomial always exists: for instance, we can consider $x_ix^\alpha=x^{\alpha'}\ast_{J} x^{\eta'}$.

By hypothesis there is $t$ such that $x_0^t S(f_\alpha,f_{\alpha'})=x_0^t(x_i f_\alpha-x^{\eta'}f_{\alpha'}) =\sum a_j x^{{\eta'}_j}f_{\alpha_j}$ where $x_0^tx^{\eta'}, x^{{\eta'}_j}$ are lower than $x_i$ w.r.t. $\texttt{Lex}$. Then $x_0^tx^{\eta'}f_{\alpha'}$,  $x^{{\eta'}_j}f_{\alpha_j}$  belong to $\langle V\rangle$ by induction and we conclude that $x_i f_\alpha \in \langle V \rangle$, by Lemma \ref{x0V}.
\end{proof}

The previous theorem is the analogous of Theorem \ref{BuchCrit1} for the reduction process $\SGred $.
As stated in Corollary \ref{sizEK} concerning the Buchberger-like criterion for the reduction $\xrightarrow{\ V_\ell\ }$, also in Theorem \ref{nostrobuch} it would be sufficient to assume statement (\ref{nostrobuch_iii}) only for EK-polynomials.

\begin{corollary}\label{nostrobuchEK}
With the same notations of Theorem \ref{nostrobuch}, the followings are equivalent:
\begin{enumerate}[(i)]
\item $I\in \Mf(J)$
\item { for every EK-polynomial between elements of }$G$, $\exists \ t :\, x_0^tS^{EK}(f_\alpha,f_{\alpha'})\SGred 0$.
\item  { for every EK-polynomial between elements of }$G$, $\exists \ t$ such that  $x_0^t\cdot S^{EK}(f_\alpha, f_{\alpha'})=x_0^t(x_i f_\alpha-x^{\gamma'}f_{\alpha'}) =\sum a_j x^{\eta_j}f_{\alpha_j}$, with $x^{\eta_j}<_{\tt{Lex}}x_i$ and $f_{\alpha_j}\in sG$.
\end{enumerate}
\end{corollary}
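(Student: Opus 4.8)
The plan is to mimic exactly the relationship between Theorem \ref{BuchCrit1} and Corollary \ref{sizEK}, transposed to the superminimal setting of Theorem \ref{nostrobuch}. The implication $(i)\Rightarrow(ii)$ is immediate: every EK-polynomial lies in $I$, so by Theorem \ref{ridsm}, (\ref{ridsm-iv}) there exists $t$ with $x_0^t S^{EK}(f_\alpha,f_{\alpha'})\SGred 0$. The implication $(ii)\Rightarrow(iii)$ follows from Lemma \ref{lexrid}: since an EK-polynomial has the shape $x_if_\alpha-x^{\eta}f_{\alpha'}$ with $x_i>\min(x^\alpha)\geq\max(x^{\eta})$, we have $x^{\eta}<_{\mathtt{Lex}} x_i=\max_{\mathtt{Lex}}\{x_i,x^{\eta}\}$, and Lemma \ref{lexrid} rewrites $x_0^t S^{EK}(f_\alpha,f_{\alpha'})-0$ as $\sum a_j x^{\eta_j}f_{\alpha_j}$ with $f_{\alpha_j}\in sG$ and $x^{\eta_j}<_{\mathtt{Lex}} x_i$, which is precisely $(iii)$.

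The substantive direction is $(iii)\Rightarrow(i)$. Here I would revisit the proof of Theorem \ref{nostrobuch}, $(iii)\Rightarrow(i)$, and observe that it already only uses one specific $S$-polynomial at its crucial step. Indeed, by Lemma \ref{forseteo} it suffices to prove $\langle V\rangle=\langle W\rangle$, i.e.\ $x^\eta\cdot V\subseteq\langle V\rangle$ for every monomial $x^\eta$, and the induction on monomials ordered by $>_{\mathtt{Lex}}$ reduces (exactly as in the proof of Theorem \ref{nostrobuch}) to showing $x_i f_\alpha\in\langle V\rangle$ when $x_i>\min(x^\alpha)$. At that point the proof of Theorem \ref{nostrobuch} picks the $S$-polynomial $S(f_\alpha,f_{\alpha'})=x_i f_\alpha-x^{\eta'}f_{\alpha'}$ coming from the decomposition $x_i x^\alpha=x^{\alpha'}\ast_J x^{\eta'}$; but since $x_i>\min(x^\alpha)$, this is precisely an Eliahou--Kervaire couple and $S(f_\alpha,f_{\alpha'})$ is an EK-polynomial. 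So hypothesis $(iii)$ restricted to EK-polynomials supplies exactly the identity $x_0^t(x_i f_\alpha-x^{\eta'}f_{\alpha'})=\sum a_j x^{\eta'_j}f_{\alpha_j}$ with the terms $<_{\mathtt{Lex}} x_i$, from which $x_i f_\alpha\in\langle V\rangle$ follows via the inductive hypothesis and Lemma \ref{x0V}, just as before. Thus the proof of Theorem \ref{nostrobuch} goes through verbatim using only EK-polynomials.

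I expect the only real point requiring care — the "main obstacle", though a mild one — is verifying that the $S$-polynomial selected at the key inductive step of the proof of Theorem \ref{nostrobuch} is genuinely an EK-polynomial in the precise sense of the definition, i.e.\ that the index $x_i$ arising there satisfies $x_i>\min(x^\alpha)$ and that $x_i x^\alpha = x^{\alpha'}\ast_J x^{\eta'}$ is the correct $\ast_J$-decomposition; this is guaranteed by the construction in that proof, so no new argument is needed. The proof can therefore be written very briefly: $(i)\Rightarrow(ii)$ by Theorem \ref{ridsm} (\ref{ridsm-iv}); $(ii)\Rightarrow(iii)$ by Lemma \ref{lexrid}; and $(iii)\Rightarrow(i)$ by noting that the proof of Theorem \ref{nostrobuch} uses, at its crucial step, exactly an EK-polynomial, in complete analogy with the passage from Theorem \ref{BuchCrit1} to Corollary \ref{sizEK}.
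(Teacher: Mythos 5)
Your proposal is correct and matches the paper's (implicit) argument exactly: (i)$\Rightarrow$(ii) via Theorem \ref{ridsm}, (\ref{ridsm-iv}); (ii)$\Rightarrow$(iii) via Lemma \ref{lexrid}; and (iii)$\Rightarrow$(i) by observing that the crucial inductive step in the proof of Theorem \ref{nostrobuch} uses precisely the EK-polynomial coming from $x_i x^\alpha = x^{\alpha'}\ast_J x^{\eta'}$ with $x_i>\min(x^\alpha)$, in complete analogy with the passage from Theorem \ref{BuchCrit1} to Corollary \ref{sizEK}. One small inaccuracy: the $\ast_J$-decomposition gives $\min(x^{\alpha'})\geq\max(x^{\eta})$, not $\min(x^{\alpha})\geq\max(x^{\eta})$ as you write; the conclusion $x^{\eta}<_{\tt{Lex}}x_i$ that you actually need is nevertheless a true Eliahou--Kervaire fact, and it is the same one the paper itself invokes in the proofs of Theorem \ref{nostrobuch} and Corollary \ref{sizEK}.
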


\subsection{\texorpdfstring{Buchberger-like criteria via $\SGred $: second variant}{Buchberger-like criteria via sG* reduction: second variant}}\label{subsec:bucsec2}

As before, let $J$ be a strongly stable $m$-truncation ideal. By Example \ref{nonbastano}, we have already shown that reductions of $S$-polynomials between elements of $sG$ are not sufficient to characterize ideals of $\Mf(J)$; hence some more conditions are necessary. To this aim, we add some further $S$-polynomials.

Indeed, Theorem \ref{paolo} uses the set $L_1$ of some couples of polynomials of $sG$ and the set $L_2$ of some particular couples of elements of $G$ of minimal degree $m$ to obtain a new characterization of $\Mf(J)$.
Actually the elements of $L_1$ are not all the possible couples of elements in $sG$, but a subset of them, corresponding to a minimal set of generators for the first module of syzygies of the Eliahou-Kervaire resolution of $\underline J$.

\begin{theorem}\label{paolo}
Consider a strongly stable $m$-truncation ideal $J$ and $G$ a $J$-marked set. Let us define the following sets:
\[
L_1 := \left\{ (f_\alpha,f_{\alpha'})\  \vert\ f_\alpha,f_{\alpha'} \in sG  \text{ and } x_i x^{\underline \alpha} = x^{\underline \alpha'} \ast_{\underline J} x^\eta  \right\},
\]
\[
L_2:=\left\{ (f_\alpha,f_{\alpha'})\  \vert\ f_\alpha,f_{\alpha'} \in G_m  \text{ and } x_i x^{\alpha'}=x_0 x^{\alpha},\  x_i = \min_{j>0} \{x_j : x_j \mid x^\alpha\}\right\}.
\]
Then:
\[
 I\in\Mf(J)\Leftrightarrow\ \forall\  (f_\alpha,f_{\alpha'})\in L_1\cup L_2, \ \exists \ t \text{ such that }x_0^t\cdot S(f_\alpha,f_{\alpha'})\SGred 0.
\]
\end{theorem}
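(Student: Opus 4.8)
The plan is to prove the two implications separately, the direct one being immediate and the converse carrying the whole weight. For the direct implication: if $I\in\Mf(J)$, then every couple $(f_\alpha,f_{\alpha'})\in L_1\cup L_2$ is a couple of elements of $G$, so $S(f_\alpha,f_{\alpha'})$ is a homogeneous polynomial of $I$ of degree $\geq m$; hence $x_0^{\bar t}\cdot S(f_\alpha,f_{\alpha'})\SGred 0$ for a suitable $\bar t$ by Theorem \ref{ridsm}, (\ref{ridsm-iv}), and nothing more is needed.

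For the converse, assume every $S$-polynomial of a couple in $L_1\cup L_2$ reduces to $0$ after multiplication by a suitable power of $x_0$. I would deduce $I\in\Mf(J)$ by checking the criterion of Corollary \ref{nostrobuchEK}, i.e.\ that \emph{every} EK-polynomial between elements of $G$ reduces to $0$ in the same sense, running the same Buchberger-style induction as in the proofs of Theorems \ref{BuchCrit1} and \ref{nostrobuch}. The structural facts of Lemma \ref{proptagliati} are used constantly: since $J$ is an $m$-truncation, $B_J\setminus sB_J$ consists only of monomials of degree $m$, the set $sB_J$ is in bijection with $B_{\underline J}$ via $x^\alpha\mapsto x^{\underline\alpha}$, and $\cN(J)_{\geq m}=\cN(\underline J)_{\geq m}$, so that above degree $m$ the reduction $\SGred$ is really a reduction ``modulo $\underline J$''. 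Two preliminary facts then drive the argument. First, the couples in $L_2$ allow one to \emph{replace a non-superminimal generator by superminimal ones}: starting from $x^\alpha\in(B_J)_m\setminus sB_J$ and repeatedly dividing $x^{\underline\alpha}$ by its smallest variable --- which stays inside $\underline J$ because $\underline J$ is strongly stable --- one reaches a monomial of $B_{\underline J}$, hence a superminimal generator $x^{\widetilde\alpha}$ with $x^{\underline\alpha}=x^{\underline{\widetilde\alpha}}\ast_{\underline J}x^\epsilon$, and every single step of this chain is exactly an $L_2$-couple; using the hypothesis on these $S$-polynomials together with the Noetherianity of $\SGred$ (Theorem \ref{ridsm}, (\ref{ridsm-i})), one obtains that $x_0^s f_\alpha$ and $x_0^{s-\vert\epsilon\vert}\,x^\epsilon f_{\widetilde\alpha}$ have the same strongly reduced form for $s\gg0$. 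Second, the couples in $L_1$ are by construction the liftings to $sG$ of a minimal generating set of the first syzygy module of $\underline J$ in its Eliahou--Kervaire resolution, so the hypothesis handles the ``superminimal'' Eliahou--Kervaire syzygies directly.

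With these two facts in hand, I would take an arbitrary EK-couple $(f_\alpha,f_\beta)$ of $G$, with $x_jx^\alpha=x^\beta\ast_Jx^\eta$ and $x_j>\min(x^\alpha)$, and argue by cases. If $x^\alpha$ and $x^\beta$ are both superminimal, de-homogenizing the relation $x_jx^\alpha=x^\beta\ast_Jx^\eta$ produces (up to powers of $x_0$) an Eliahou--Kervaire syzygy of $\underline J$, hence a combination of $L_1$-generators and of syzygies whose multiplying monomials are strictly smaller in $\texttt{Lex}$ by Lemma \ref{descLex}; re-homogenizing, multiplying by a suitable $x_0^t$, and invoking the inductive hypothesis on the smaller monomials closes this case. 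If instead one of $x^\alpha,x^\beta$ is a non-superminimal degree-$m$ generator, I would first apply the $L_2$-replacement to rewrite the corresponding polynomial in terms of superminimals modulo $\SGred$-reduced remainders --- which are $\underline J$-reduced, or controlled by $\texttt{Lex}$-smaller data thanks to Lemma \ref{lexrid} --- thereby reducing to the previous case.

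The main obstacle I expect is precisely this last piece of bookkeeping: one has to verify that every auxiliary relation produced while interleaving the $L_2$-replacements with the $L_1$-syzygies is either a member of $L_1\cup L_2$ or is dominated by a strictly $\texttt{Lex}$-smaller monomial, so that the induction actually terminates. Keeping de-homogenization, the decomposition $\ast_{\underline J}$, and the chains of $L_2$-moves mutually consistent, and in particular guaranteeing that no relation ``escapes upward'' in $\texttt{Lex}$ --- the pathology of Example \ref{jnontagl}, which the $m$-truncation hypothesis is there to rule out --- is the delicate part of the proof.
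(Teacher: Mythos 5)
Your forward implication is fine, but the converse has a genuine gap, located exactly in your first ``preliminary fact''. From a single $L_2$-pair the hypothesis only gives $x_0^t(x_0 f_\alpha - x_i f_{\alpha'})\SGred 0$; to iterate along your chain toward the superminimal generator $f_{\widetilde\alpha}$ you must multiply the relation produced at the next step by $x_i$ (and later by $x_{i'}$, etc.), all variables $>x_0$. At this stage of the argument nothing guarantees that ``$\SGred$-reduces to $0$'' or ``has the same strongly reduced form'' is preserved under multiplication by a variable other than $x_0$: only $x_0$-multiples are under control (Lemma \ref{x0V}, Lemma \ref{proptagliati}), precisely because $\ast_{\underline J}$ ignores $x_0$, while multiplying by $x_i$, $i\geq 1$, changes the $\ast_{\underline J}$-decompositions and hence the reduction steps. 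Stability of $\SGred$ under arbitrary variable multiplication is essentially equivalent to $I\in\Mf(J)$, which is what you are trying to prove. So the claim that $x_0^s f_\alpha$ and $x_0^{s-\vert\epsilon\vert}x^\epsilon f_{\widetilde\alpha}$ have the same strongly reduced form does not follow from ``the hypothesis on these $S$-polynomials together with Noetherianity''; this is the very difficulty you defer to ``the delicate part'', and no mechanism is supplied to resolve it.

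There is also a structural misalignment in the target you set. You propose to verify condition (ii) of Corollary \ref{nostrobuchEK} (every EK-polynomial of $G$ reduces to $0$) ``by the same Buchberger-style induction as in Theorems \ref{BuchCrit1} and \ref{nostrobuch}''; but those inductions derive $I\in\Mf(J)$ (i.e.\ $\langle V\rangle=\langle W\rangle$) from a rewriting condition — they are not a device for upgrading reduction-to-$0$ of the few pairs in $L_1\cup L_2$ to reduction-to-$0$ of all EK pairs, and in the paper the passage from the rewriting condition to the reduction condition goes through $I\in\Mf(J)$ itself. The paper's proof skips the EK criterion: by Lemma \ref{forseteo} it suffices to show $x_i\cdot V\subseteq\langle V\rangle$ for every $i$, and this is done by a double induction, on the variable $x_i$ and, for fixed $x_i$, on the $\mathtt{Lex}$-order of the cofactor $x^\eta$ in $x^\beta=x^{\underline\alpha}\ast_{\underline J}x^\eta$, where each $L_1$- or $L_2$-hypothesis is first converted by Lemma \ref{lexrid} into a rewriting $x_0^tS=\sum a_jx^{\eta_j}f_{\alpha_j}$ with $\mathtt{Lex}$-smaller multipliers and $f_{\alpha_j}\in sG$, powers of $x_0$ are stripped with Lemma \ref{x0V}, and the troublesome multiplications by $x_i$ and $x_j$ are absorbed by the inductive hypotheses on $\langle V\rangle$-membership rather than by any property of $\SGred$. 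Your identification of the roles of $L_1$ (liftings of the Eliahou--Kervaire syzygies of $\underline J$) and $L_2$ (linking degree-$m$ non-superminimal generators to superminimals via the smallest positive variable) agrees with the paper, but the invariant carried through the induction must be $\langle V\rangle$-membership (or the rewriting condition), not ``$\SGred$-reduces to $0$''; with the latter the induction does not close.
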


\begin{proof}
If $I$ belongs to $\Mf(J)$, then it is enough to apply Theorem \ref{nostrobuch}, (\ref{nostrobuch_ii}).

Vice versa, by Lemma \ref{forseteo} it is sufficient to prove that $\langle V\rangle=\langle W\rangle$, that is $x_i\cdot V\subseteq \langle V\rangle$ for every $i=0,\dots, n$. We proceed by induction on the variables. By construction we have $x_0\cdot V \subseteq \langle V\rangle$. We now assume that $(x_0,\dots,x_{i-1})V\subseteq \langle V\rangle$ and we prove that $x_i\cdot V\subseteq \langle V\rangle$. Consider $x^\delta f_\beta \in V$. The thesis is that $x_i\cdot x^\delta f_\beta$ is contained in $\langle V\rangle$. If $x_i x^\delta f_\beta$ does not belong to $V$, then $\max(x_i\cdot x^\delta)>\min(x^\beta)$, so $x_i>\min(x^\beta)$ because $\max(x^\delta)\leq \min(x^\beta)$ by construction. In particular, $x_i>\min(x^\beta)\geq \max(x^\delta)$, so that it is sufficient to prove the thesis for $x_i f_\beta$, because by induction then we have $x^\delta x_i f_\beta\in \langle V\rangle$. Consider $x^\beta=x^{\underline\alpha}\ast_{\underline J} x^\eta$.

We have a first case when $x^\eta=1$. Then $x^\beta=x^{\underline\alpha}$ and $f_\beta$ belongs to $sG$. We consider $x^{\underline\alpha} x_i=x^{\underline{\alpha'}}\ast_{\underline J}x^{\eta'}$. Observe that since $x_i>\min(x^{\underline\alpha})$ then $x_i$ does not divide $x^{\eta'}$ and $\max(x^{\eta'})<x_i$. Consider $x^{{\alpha}'}=x^{\underline{\alpha'}}\cdot x_0^{t_{\alpha'}}$, so that we can take the polynomial $f_{{\alpha}'}\in sG$. The pair $(f_\beta,f_{\alpha'})$ belongs to $L_1$, hence, by the hypothesis and by Lemma \ref{lexrid}, there is $t$ such that
\[
x_0^tS(f_\beta,f_{{\alpha}'})=x_0^t(x_0^{t_{{\alpha}'}}x_if_\beta-x^{\eta'}f_{{\alpha}'})=\sum a_j x^{\eta_j}f_{\alpha_j},
\]
with $x^{\eta_j}<_{\tt{Lex}}x_i$ and $f_{\alpha_j}\in sG$. Hence we obtain that both  $x^{\eta_j}f_{\alpha_j}$ and  $x^{\eta'}f_{{\alpha}'}$ belong to $\langle V\rangle$ by induction on the variables, and so $x_if_\beta$ belongs to $\langle V\rangle$ (by Lemma \ref{x0V}).

We have a second case when $x^\eta=x_0^t$, $t>0$. Then, $\vert\beta\vert=m$ and $f_\beta$ belongs to $sG$. Let $x_ix^\beta = x^{\underline{\alpha}'}\ast_{\underline J} x^{\eta'}$. If $x_i>\min (x^{\underline{\alpha}'})$, then $x^{\eta'}$ is not divisible by $x_i$ and we repeat the argument above. Otherwise, $x_i\leq\min (x^{\underline{\alpha}'})$ and $x_i$ does not divide $x^{\eta'}$, so that $x_i=\min(x^{\underline{\alpha}'})$ and $x^{\eta'}<_{\mathtt{Lex}} x_i$. Then, we take $x^{\beta'}=\frac{x^\beta}{x_0}\cdot x_i$ that belongs to $B_{J}$ because it has degree $m$. The pair $(f_\beta,f_{\beta'})$ belongs to $L_2$ and we repeat the same reasoning above.

We now assume the thesis holds for every $f_{\beta'}$ such that $x^{\beta'}=x^{\underline{\alpha}'} \ast_{\underline J} x^{\eta'}$ with $x^{\eta'}<_{\tt{Lex}}x^\eta$. By the base of the induction, we can suppose that $x^\eta\geq_{\tt{Lex}}x_1$; so, $f_\beta$ does not belong to $sG$ and it has degree $m$. Let $x_j:= \min_{l>0} \{x_l\ :\ x_l\mid x^\beta\}$.

Observe that if $x_0$ does not divide $x^\beta$, then $x_j= \min(x^\beta)$; in this case, we have  $x_i>x_j$ because  $x_i > \min (x^\beta)$. Anyway, first we suppose that $x_i\leq x_j$;  $x_j> \min(x^\beta)$ and $x_0$ divides $x^\beta$. We consider $x^{\beta'}=\frac{x^\beta}{x_0}\cdot x_i$, the pair $(f_{\beta},f_{\beta'})$
that belongs to $L_2$ and we repeat the argument of the previous case.

We now assume that $x_i>x_j$ and consider $x^{\beta'}=\frac{x^\beta}{x_j}\cdot x_0=x^{\underline{\alpha}'}\ast_{\underline J}x^{\eta'}$. Observe that $x^{\eta'}<_{\tt{Lex}}x^\eta$ as $x^{\eta'}=\frac{x^\eta}{x_j}\cdot x_0$. Therefore the pair $(f_{\beta'},f_\beta)$
belongs to $L_2$;  by the hypothesis and by Lemma \ref{lexrid}, there is an integer $t$ such that
\begin{equation}\label{spoly}
x_0^tS(f_{\beta'},f_\beta)=x_0^t(x_j f_{\beta'}-x_0 f_\beta)=\sum a_l x^{\eta_l}f_{\alpha_l}
\end{equation}
with $x^{\eta_l}<_{\tt{Lex}}x_j$, $f_{\alpha_l}\in sG$. We now multiply \eqref{spoly} by $x_i$. We observe that $x_i f_{\alpha_l}$ belongs to $\langle V\rangle$, because $f_{\alpha_l}\in sG$ and by the first two cases. Also $x_i f_{\beta'}$ belongs to $\langle V\rangle$ because $x^{\eta'}<_{\tt{Lex}}x^\eta <_{\tt{Lex}} x_i$. Moreover, $x_jx_i f_{\beta'}$ belongs to $\langle V\rangle$ by induction on the variables. Finally $x_i f_\beta$ belongs to $\langle V\rangle$ thanks to Lemma \ref{x0V}.
\end{proof}


\section{\texorpdfstring{Embedding of $\Mf(J)$ in affine linear spaces of low dimension}{Embedding of Mf(J) in affine linear spaces of low dimension}}\label{ctildate}

In this section we continue to consider a strongly stable $m$-truncation ideal $J=\underline{J}_{\geq m}$ and, as in Subsection \ref{subsec:MfJ_V}, we work again with $J$-marked sets $\mathcal{G}$ where the coefficients of the monomials in the tails are considered as parameters.

\begin{definition}\label{decomposizione} If  $\mathcal G$ is the set of marked polynomials given as in \eqref{JbaseC} for the ideal $J$, we will call \emph{set of superminimals}, and denote it by $s\mathcal G$,  the subset of $\mathcal G$ made up of $F_\alpha \in \mathcal G$ with $\Ht\left(F_\alpha\right)\in sB_{J}$. We will denote by $C$ the set of variables appearing in the tails of the polynomials in $\mathcal G$ and by $\widetilde C$ the set of variables appearing in the tails of the polynomials in $s\mathcal G$. ${\mathfrak A}_{J}$ is the ideal defining the affine subscheme $\Mf(J)$ in the ring $K[C]$.
\end{definition}

Observe that the $J$-marked basis $G$ of every $I\in \Mf(J)$ is obtained by specializing in a suitable way the variables $C$ in $\mathcal G$ and that the set of superminimals $sG$ of $I$ is obtained in the same way by $s\mathcal G$ through the same specialization of the variables $\widetilde C$.

\subsection{\texorpdfstring{The new embedding of $\Mf(J)$}{The new embedding of Mf(J)}}

In this subsection we answer to the first question raised in the Introduction. In Theorem \ref{smallaffine} we prove that the set of equations in $K[C]$ defining $\Mf(J)$ allows the elimination of a large number of parameters, more precisely those of $C\setminus \widetilde{C}$.  Furthermore, using results of previous sections about the superminimal reduction, we are able to determine a set of equations defining $\Mf(J)$ in $K[\widetilde{C}]$ avoiding at all the introduction of parameters in  $C\setminus \widetilde{C}$. This fact combined with the choice of a small set of $S$-polynomials (according to Corollary \ref{nostrobuchEK} or Theorem \ref{paolo}) will turn out to be significantly useful in projecting an effective algorithm for the computation of such equations.   Furthermore, this new sets of equations turns out to be more suitable  in order to compare marked schemes of $m$-truncation ideals of a  strongly stable saturated ideal $\underline J$  as $m$ varies.

\begin{definition}\label{elim}
Let $x^\alpha \in B_{J}$ and $t$ be an integer such that $x_0^t \cdot x^\alpha \SGredPar  H_\alpha \in K[\tilde C,x]$, with $H_\alpha$ strongly reduced (the integer $t$ exists by Theorem \ref{ridsm}). We can write $H_\alpha=H'_\alpha  + x_0^t \cdot H''_\alpha$, where no monomial appearing in $H'_\alpha$ is divisible by $x_0^t$. We will denote by:
\begin{itemize}
\item $\mathfrak B=\{C_{\alpha \gamma}- \phi_{\alpha \gamma} \ : \ x^\alpha \in B_{J}\setminus sB_{J}, x^\gamma \in \cN(J)_{\vert \alpha \vert}\}$ the set of the $x$-coefficients of $T(F_\alpha)-H''_\alpha$ for every $x^\alpha \in B_{J}$;
\item $\mathfrak D_1\subset K[\widetilde C]$ the set of the $x$-coefficients of $H'_\alpha$ for every $x^\alpha \in B_{J}\setminus sB_{J}$;
\item $\mathfrak D_2$ the set of the $x$-coefficients of the strongly reduced polynomials in $(s{\mathcal G})K[\widetilde C,x] $.
\end{itemize}
\end{definition}

\begin{remark}
Observe that not only $\mathfrak D_2$ but also  $\mathfrak B$ and $\mathfrak D_1$ are well-defined thanks to the uniqueness of $H_\alpha$, by Theorem \ref{ridsm}, (\ref{ridsm-ii}).
\end{remark}

\begin{theorem}\label{smallaffine} The $J$-marked scheme $\Mf(J)$ is defined by the ideal $\widetilde {\mathfrak A}_{J}:={\mathfrak A}_{J} \cap K[ \widetilde C]$ as subscheme of the affine space $\mathbb A^{\vert \widetilde C\vert}$, where $\vert \widetilde C\vert= \sum_{x^{\alpha}\in sB_{J}}\vert \cN(J)_{\vert\alpha\vert}\vert$.
Moreover ${\mathfrak A}_{J}=(\mathfrak B \cup \mathfrak D_1 \cup \mathfrak D_2)K[C]$  and $\widetilde {\mathfrak A}_{J}=(\mathfrak D_1 \cup \mathfrak D_2)K[\widetilde C]$.
\end{theorem}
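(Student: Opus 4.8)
The plan is to establish three things: first, that $\mathfrak{A}_J$ is generated by $\mathfrak{B} \cup \mathfrak{D}_1 \cup \mathfrak{D}_2$ in $K[C]$; second, that this presentation exhibits $\mathfrak{A}_J$ as generated by equations of the form (linear polynomials expressing each variable in $C\setminus\widetilde C$ in terms of $\widetilde C$) together with equations purely in $K[\widetilde C]$; and third, that intersecting with $K[\widetilde C]$ and projecting away the $C\setminus\widetilde C$ variables identifies $\Mf(J)$ as a subscheme of $\mathbb{A}^{|\widetilde C|}$ cut out by $(\mathfrak{D}_1\cup\mathfrak{D}_2)K[\widetilde C]$.

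First I would recall, via Remark \ref{rem5} and Corollary \ref{last}, that $\mathfrak{A}_J$ may be computed as the ideal generated by the $x$-coefficients of the $J$-reduced forms of \emph{any} sufficient set of $S$-polynomials of $\mathcal{G}$ — in particular we are free to compute reductions using $\SGredPar$ and the superminimal Buchberger criteria of Theorem \ref{nostrobuch} (or Theorem \ref{paolo}). Next I would analyze the set $\mathfrak{B}$. For each $x^\alpha \in B_J\setminus sB_J$, the decomposition of Definition \ref{elim} writes $x_0^t\cdot x^\alpha \SGredPar H_\alpha = H'_\alpha + x_0^t H''_\alpha$; the key point is that $F_\alpha = x^\alpha - T(F_\alpha)$, so $x_0^t F_\alpha$ lies in $\mathfrak{I}_J$ and hence $x_0^t(x^\alpha - T(F_\alpha))$ must reduce to $0$ modulo the relations, giving $x_0^t(T(F_\alpha) - H''_\alpha) - H'_\alpha \equiv 0$; the $x$-coefficients of $H'_\alpha$ split off (they involve monomials not divisible by $x_0^t$) into $\mathfrak{D}_1$, while the remaining coefficients — those of $T(F_\alpha) - H''_\alpha$, possibly after dividing by $x_0^t$ and using Lemma \ref{proptagliati}(\ref{tagliii}) together with uniqueness in Theorem \ref{ridsm}(\ref{ridsm-ii}) — give exactly the linear relations $C_{\alpha\gamma} - \phi_{\alpha\gamma}$ in $\mathfrak{B}$. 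Crucially each such relation has the shape $C_{\alpha\gamma} - (\text{polynomial in }\widetilde C)$ because $H''_\alpha \in K[\widetilde C, x]$, so $\mathfrak{B}$ lets us eliminate every variable of $C\setminus\widetilde C$.

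Then I would argue the two inclusions for $\mathfrak{A}_J = (\mathfrak{B}\cup\mathfrak{D}_1\cup\mathfrak{D}_2)K[C]$. For ``$\supseteq$'': $\mathfrak{B}$ consists of $x$-coefficients of polynomials in $\mathfrak{I}_J$ that are $J$-reduced after the substitution, hence lie in $\mathfrak{A}_J$ by Corollary \ref{last}; $\mathfrak{D}_1$ and $\mathfrak{D}_2$ likewise arise as $x$-coefficients of strongly-reduced (hence $J$-reduced, by Lemma \ref{proptagliati}(\ref{taglv})) members of $\mathfrak{I}_J$ — for $\mathfrak{D}_2$ these come from the strongly reduced polynomials in $(s\mathcal{G})K[\widetilde C,x]$, and every such polynomial is, up to multiplication by a power of $x_0$, a $\SGredPar$-reduction of an element of $\mathfrak{I}_J$. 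For ``$\subseteq$'': given an ideal $I = (G) \in \Mf(J)$, Theorem \ref{ridsm}(\ref{ridsm-v}) says $I$ is determined by $sG$; conversely, a point of $\mathbb{A}^{|\widetilde C|}$ satisfying $\mathfrak{D}_1\cup\mathfrak{D}_2 = 0$ produces superminimals whose $\SGredPar$-reductions all vanish, and using $\mathfrak{B}$ to reconstruct the full marked set $G$, Theorem \ref{nostrobuch} (applied with this $G$ and noting the $S$-polynomial reductions are controlled by $\mathfrak{D}_2$ and the relations already imposed) shows $(G) \in \Mf(J)$. Hence the closed subscheme of $\mathbb{A}^{|C|}$ cut out by $(\mathfrak{B}\cup\mathfrak{D}_1\cup\mathfrak{D}_2)$ equals $\Mf(J)$, so that ideal equals $\mathfrak{A}_J$. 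Finally, eliminating the variables $C\setminus\widetilde C$ via the linear equations $\mathfrak{B}$ (which express them as functions of $\widetilde C$) gives a closed immersion $\Mf(J)\hookrightarrow\mathbb{A}^{|\widetilde C|}$; concretely $\widetilde{\mathfrak{A}}_J = \mathfrak{A}_J\cap K[\widetilde C]$ and, since $\mathfrak{B}$ contributes nothing to this intersection while $\mathfrak{D}_1,\mathfrak{D}_2\subset K[\widetilde C]$ already, we get $\widetilde{\mathfrak{A}}_J = (\mathfrak{D}_1\cup\mathfrak{D}_2)K[\widetilde C]$. The count $|\widetilde C| = \sum_{x^\alpha\in sB_J}|\cN(J)_{|\alpha|}|$ is immediate from the definition of $s\mathcal{G}$ and Definition \ref{def:defJbase}.

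The main obstacle I anticipate is the ``$\subseteq$'' direction of the ideal equality — specifically showing that imposing only the vanishing of $\mathfrak{D}_1\cup\mathfrak{D}_2$ (the superminimal data) really forces membership in $\Mf(J)$, i.e. that no extra equations beyond these and the eliminating relations $\mathfrak{B}$ are needed. This is where one must invoke the superminimal Buchberger criterion (Theorem \ref{nostrobuch} or \ref{paolo}) carefully: one has to check that \emph{every} $S$-polynomial reduction required by that criterion is already accounted for by $\mathfrak{D}_2$ together with the substitutions encoded in $\mathfrak{B}$, rather than producing genuinely new constraints. Handling the scheme structure (not just the point set) requires this at the level of ideals, using the uniqueness statements in Theorem \ref{ridsm} and Corollary \ref{cor:formula} to control how $x$-coefficients propagate through the reductions.
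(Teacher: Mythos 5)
Your ``$\supseteq$'' half is essentially the paper's: you observe that $x_0^t\, T(F_\alpha)-H_\alpha$ is a strongly reduced element of $\mathfrak I_J$, so that Corollary \ref{last} places the coefficients $C_{\alpha\gamma}-\phi_{\alpha\gamma}$ (the set $\mathfrak B$) and the $x$-coefficients of $H'_\alpha$ (the set $\mathfrak D_1$) inside $\mathfrak A_J$, and likewise $\mathfrak D_2\subset\mathfrak A_J$ because $(s\mathcal G)K[\widetilde C,x]\subset \mathfrak I_J$ and, in degrees $\geq m$, strongly reduced is the same as $J$-reduced (Lemma \ref{proptagliati}). The elimination of the variables $C\setminus\widetilde C$ via $\mathfrak B$, and the count $\vert\widetilde C\vert=\sum_{x^\alpha\in sB_J}\vert\cN(J)_{\vert\alpha\vert}\vert$, are also fine.

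The genuine gap is in your ``$\subseteq$'' direction, i.e.\ $\mathfrak A_J\subseteq(\mathfrak B\cup\mathfrak D_1\cup\mathfrak D_2)K[C]$, equivalently $\widetilde{\mathfrak A}_J\subseteq(\mathfrak D_1\cup\mathfrak D_2)K[\widetilde C]$. You argue that every closed point of $\Af^{\vert\widetilde C\vert}$ annihilating $\mathfrak D_1\cup\mathfrak D_2$ yields, after reconstructing $G$ through $\mathfrak B$, an ideal of $\Mf(J)$, and you conclude ``the subscheme cut out by $(\mathfrak B\cup\mathfrak D_1\cup\mathfrak D_2)$ equals $\Mf(J)$, so that ideal equals $\mathfrak A_J$''. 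Equality of sets of closed points only determines the ideals up to radical, and $\mathfrak A_J$ is not known (and in general cannot be assumed) to be radical: marked schemes carry a genuine scheme structure, possibly non-reduced, and the theorem asserts equalities of ideals. You flag this yourself at the end but do not supply the ideal-level argument; moreover the intermediate claim that vanishing of $\mathfrak D_1\cup\mathfrak D_2$ forces all the $S$-polynomial reductions required by Theorem \ref{nostrobuch} to vanish is only asserted (this is exactly what Proposition \ref{formuleconsuperminimals} is for). The paper closes the inclusion purely at the level of coefficients: by Corollary \ref{last}, any $w\in\mathfrak A_J\cap K[\widetilde C]$ is an $x$-coefficient of a strongly reduced $D=\sum D_\alpha F_\alpha\in\mathfrak I_J$; applying the substitution $C_{\alpha\gamma}\mapsto\phi_{\alpha\gamma}$ (which leaves $w$ unchanged) and multiplying by a suitable $x_0^t$, one gets $x_0^t D^{\phi}\equiv\sum D^{\phi}_\alpha\bigl(x_0^t(x^\alpha-H''_\alpha)-H'_\alpha\bigr) \bmod \mathfrak D_1$, a strongly reduced element of $(s\mathcal G)K[\widetilde C,x]$, whose $x$-coefficients lie in $\mathfrak D_2$ by definition; hence $w\in(\mathfrak D_1\cup\mathfrak D_2)K[\widetilde C]$. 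Some such substitution/rewriting argument is what your proposal is missing.
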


\begin{proof}
For the first part it is sufficient to prove that ${\mathfrak A}_{J}$ contains $\mathfrak B$ and so it contains an element of the type $C_{\alpha \gamma}- \phi_{\alpha \gamma}$, for every $C_{\alpha \gamma}\in C\setminus \widetilde C$, where $\phi_{\alpha \gamma}\in K[\widetilde C]$, that allows the elimination of the variables $C_{\alpha \gamma}\in C\setminus \widetilde C$.

It is clear by the construction in Definition \ref{elim} that $H_\alpha$ belongs to $K[\widetilde C,x]$ and that both $x_0^t \cdot T(F_\alpha)$ and $H_\alpha$ are strongly reduced. Thus their difference $x_0^t \cdot T(F_\alpha)-H_\alpha$ is strongly reduced and moreover it belongs to $\mathfrak{I}_{J}$, because $x_0^t \cdot T(F_\alpha)-H_\alpha= -x_0^t \cdot F_\alpha+(x_0^t \cdot x^\alpha-H_\alpha)$. Hence, by Corollary \ref{last}, its $x$-coefficients belong to ${\mathfrak A}_{J}$ and in particular the coefficient of $x_0^t \cdot x^\gamma$ is of the type $C_{\alpha \gamma}- \phi_{\alpha \gamma}$, with $\phi_{\alpha \gamma}\in K[\widetilde C]$. Then $\mathfrak A_{J} \supseteq \mathfrak B$ and $\mathfrak A_{J}$ is generated by $\mathfrak B \cup \widetilde{ \mathfrak A}_{J}$.

To prove the second part, it is sufficient to show that ${\mathfrak A}_{J} \cap K[\widetilde C]=(\mathfrak D_1 \cup \mathfrak D_2)K[\widetilde C]$.

\lq\lq $\supseteq $\rq\rq\ Taking the $x$-coefficients in $x_0^t \cdot T(F_\alpha)-H_\alpha$ of monomials that are not divisible by $x_0^t$, we see that $\mathfrak A_{J}$ contains the $x$-coefficients of $H'_\alpha$. Then $\mathfrak A_{J} \cap K[\widetilde C] \supseteq \mathfrak D_1$, because $H'_\alpha \in K[\widetilde C,x]$.

Moreover we recall that $\mathfrak A_{J}$ is made by all the $x$-coefficients in the polynomials of $\mathfrak{I}_{J}$ that are strongly reduced. Indeed, $\mathfrak A_{J}$ is made by all the $x$-coefficients of the polynomials of $\mathfrak{I}_{J}$ that are $J$-reduced. Since the degree of the monomials in the variables $x$ of every polynomial in $\mathfrak{I}_{J}$ is $\geq m$, then \lq\lq$J$-reduced\rq\rq \ is equivalent to \lq\lq$\underline J$-reduced\rq\rq, that it is strongly reduced, by Lemma \ref{proptagliati}, (\ref{tagliv}). Then $\mathfrak A_{J} \cap K[\widetilde C]  \supseteq \mathfrak D_2$, because $(s{\mathcal G})K[\widetilde C,x] \subset \mathfrak{I}_{J}$.

\lq\lq $\subseteq $\rq\rq\  For every polynomial $F\in K[C,x]$, let us denote by $F^\phi$  the polynomial in $K[\widetilde C,x]$ obtained substituting every $C_{\alpha \gamma}\in C\setminus \widetilde C$ by $\phi_{\alpha \gamma}$; if $F$ is strongly reduced, then $F^\phi$ is strongly reduced too.
Observe that for every $x^\alpha \in B_{J}$ we have $F_\alpha ^\phi= x^\alpha-H''_\alpha$ and moreover $x_0^t(x^\alpha-H_\alpha'')-H'_\alpha \in (s\mathcal G)K[\widetilde C,x]$. In particular $x_0^t F_\alpha^\phi$ and $x_0^t(x^\alpha-H_\alpha'')-H'_\alpha$ are equal modulo $\mathfrak D_1$.

It remains to prove that every element $w\in {\mathfrak A}_{J} \cap K[ \widetilde C]$ can be obtained modulo $\mathfrak D_1$ as a $x$-coefficient in some strongly reduced polynomial of the ideal $(s{\mathcal G})\subset K[\widetilde C]$. We know that $w$ is a $x$-coefficient in a strongly reduced polynomial $D\in \mathfrak{I}_{J}$.

If $D=\sum D_\alpha F_\alpha\in \mathfrak{I}_{J}$, then for a suitable $t$,
\[
x_0^t\cdot D^\phi =\sum D^\phi _\alpha\cdot \left( x_0^t\cdot (x^\alpha -H''_\alpha)-H'_\alpha\right) \mod \mathfrak D_1
\]
and the polynomial in the right-hand side of the equality is strongly reduced and it belongs to $(s\mathcal G)K[\widetilde C,x]$. Therefore $w$ is still one of the $x$-coefficients of $D^\phi$ since it does not contain any variable in $C\setminus \widetilde C$ and it remains unchanged.  Then $w\in (\mathfrak D_1 \cup \mathfrak D_2)K[\widetilde C]$.
\end{proof}

 \begin{proposition}\label{formuleconsuperminimals}  Let $ \widetilde{\mathfrak A}_J $   be as in Theorem \ref{smallaffine} and let ${\mathfrak U} $ be any ideal in $K[{\widetilde C}]$.  Assume that  ${\mathfrak{ U} } \subseteq \widetilde{\mathfrak A}_J $  and that  the following conditions hold:
\begin{enumerate}[(i)]
\item \label{formuleconsuperminimals1} For every monomial $x^\beta \in B_J\setminus sB_J$, $x^\beta= x^{\underline{\alpha}} \ast_{\underline J} x^\delta \in J$, there exists $t$ such that we have a formula of type
\[
x_0^t\cdot x^\beta=\sum b_ix^{\eta_i} F_{\alpha_i}+H_\beta,
\]
with $a_i\in K[\widetilde C]$, $F_{\alpha_i}\in s\mathcal G$,  $x^{\eta_i}\leq_{Lex}x^\delta$, $x^{\underline{\eta}_j+\underline{\alpha}_j}= x^{\underline{\alpha}_j}\ast_{\underline J}x^{\underline{\eta}_j}$ and $H_\beta=H_\beta'+ x_0^t\cdot H_\beta''$, with $H_\beta$ strongly reduced, $x_0^t$ does not divide any monomial in $\Supp(H_\beta')$ and $\Coeff_x(H_\beta')\subseteq \mathfrak U$.
\item\label{formuleconsuperminimals2} For every polynomial $F_\alpha\in s\mathcal G$ and for every $x_i>\min(x^{\underline \alpha})$ there exists $t$ such that we have a formula of type
\[
x_0^t x_i F_\alpha=\sum b_j x^{\eta_j}F_{\alpha_j}+H_{i,\alpha}
\]
where $b_j\in K[\widetilde C]$, $F_{\alpha_j}\in s\mathcal G$, $x^{\eta_j}<_{\mathtt{Lex}}x_i$, $x^{\underline{\eta}_j+\underline{\alpha}_j}= x^{\underline{\alpha}_j}\ast_{\underline J}x^{\underline{\eta}_j}$, $H_{i,\alpha}$ strongly reduced and $\Coeff_x(H_{i,\alpha})\subseteq \mathfrak U$.
\end{enumerate}
Then $\mathfrak U=(\mathfrak D_1\cup\mathfrak D_2)=\widetilde{\mathfrak{A}}_J$.
\end{proposition}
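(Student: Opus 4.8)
\emph{Overall strategy.} By Theorem \ref{smallaffine} one has $\widetilde{\mathfrak A}_J=(\mathfrak D_1\cup\mathfrak D_2)K[\widetilde C]$, and the inclusion $\mathfrak U\subseteq\widetilde{\mathfrak A}_J$ is part of the hypotheses; so it is enough to prove $\mathfrak D_1\cup\mathfrak D_2\subseteq\mathfrak U$. The plan is to rerun, in the superminimal setting, the chain Proposition \ref{prop:formula} $\to$ Corollary \ref{cor:formula} $\to$ Corollary \ref{last} $\to$ (the \lq\lq$\subseteq$\rq\rq\ part of) Theorem \ref{smallaffine}, replacing $\mathcal G$ by $s\mathcal G$, the reduction $\xrightarrow{\ \mathcal V_\ell\ }$ by $\SGredPar$, the ideal $\mathfrak A_J^{EK}$ by $\mathfrak U$, and Lemma \ref{formule}, (\ref{formule_i})--(\ref{formule_ii}), by hypotheses (\ref{formuleconsuperminimals1})--(\ref{formuleconsuperminimals2}). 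Noetherianity of the reduction is now Theorem \ref{ridsm}, (\ref{ridsm-i}); the $\mathtt{Lex}$ estimates on exponents come from Lemma \ref{descLex} applied to $\underline J$; the only genuinely new feature is keeping track of the auxiliary powers of $x_0$ attached to superminimal reductions.

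\emph{Key step.} I would first establish a superminimal analogue of Proposition \ref{prop:formula}. Call $x^\delta F_\alpha$, with $F_\alpha\in s\mathcal G$, an \emph{$s\mathcal V$-element} if $x^{\underline\delta+\underline\alpha}=x^{\underline\alpha}\ast_{\underline J}x^{\underline\delta}$, i.e.\ $\min(x^{\underline\alpha})\geq\max(x^{\underline\delta})$. The claim is: for every $x^\delta F_\alpha$, $F_\alpha\in s\mathcal G$, that is \emph{not} an $s\mathcal V$-element there exist $t$ and a formula
\[
x_0^t\, x^\delta F_\alpha=\sum b_j\, x^{\eta_j}F_{\alpha_j}+H_{\delta,\alpha},
\]
with $b_j\in K[\widetilde C]$, each $x^{\eta_j}F_{\alpha_j}$ an $s\mathcal V$-element, $x^{\underline{\eta_j}}<_{\mathtt{Lex}}x^{\underline\delta}$, $H_{\delta,\alpha}$ strongly reduced and $\Coeff_x(H_{\delta,\alpha})\subseteq\mathfrak U$. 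I would prove this by induction on $x^{\underline\delta}$ with respect to $<_{\mathtt{Lex}}$, a well-founded induction by Theorem \ref{ridsm}, (\ref{ridsm-i}); since multiplication by $x_0$ preserves being an $s\mathcal V$-element, being strongly reduced, and having $x$-coefficients in $\mathfrak U$, one may assume $x_0\nmid x^\delta$, so $x^\delta=x^{\underline\delta}$. The base case $\vert\delta\vert=1$, say $x^\delta=x_i$, is exactly hypothesis (\ref{formuleconsuperminimals2}) when $x_i>\min(x^{\underline\alpha})$ and is otherwise trivial. The inductive step copies that of Proposition \ref{prop:formula}: write $x^\delta=x_i x^{\delta'}$ with $x_i=\min(x^\delta)$, apply the inductive hypothesis to $x^{\delta'}F_\alpha$ (again not an $s\mathcal V$-element), multiply by $x_i$, re-apply the inductive hypothesis to those products $x_i x^{\eta'_j}F_{\alpha_j}$ that are no longer $s\mathcal V$-elements (admissible because $x_i x^{\underline{\eta'_j}}<_{\mathtt{Lex}}x_i x^{\underline{\delta'}}=x^{\underline\delta}$), and finally rewrite every monomial $x^\beta=x_i x^{\beta'}\in J$ coming from the tail $x_i H_{\delta',\alpha}$, whose coefficient already lies in $\mathfrak U$. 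For this last rewriting Lemma \ref{formule}, (\ref{formule_i}), is replaced by: hypothesis (\ref{formuleconsuperminimals1}) if $x^\beta\in B_{J}\setminus sB_{J}$; the identity $x^\beta=F_\beta+T(F_\beta)$ with $F_\beta\in s\mathcal G$ if $x^\beta\in sB_{J}$; and, if $x^\beta\in J\setminus B_{J}$, a single $\SGredPar$-step against the element of $s\mathcal G$ whose head has $\underline{}$-part the generator of $B_{\underline J}$ appearing in the $\ast_{\underline J}$-decomposition of $x^\beta$, followed once more by the inductive hypothesis. In every case one multiplies the current coefficients only by elements of $K[\widetilde C]$, so they remain in $\mathfrak U$; the $\mathtt{Lex}$ estimates on the new exponents are Lemma \ref{descLex}, and the various powers of $x_0$ produced along the way are harmonized by raising everything to a common power.

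\emph{Conclusion.} From the key step, exactly as in Corollary \ref{cor:formula}, every polynomial of $(s\mathcal G)K[\widetilde C,x]$ admits a \emph{unique} writing $\sum b_j x^{\eta_j}F_{\alpha_j}+H$ with the $x^{\eta_j}F_{\alpha_j}$ distinct $s\mathcal V$-elements and $H$ strongly reduced, and then $\Coeff_x(H)\subseteq\mathfrak U$; the uniqueness is the maximal-head-term argument of Corollary \ref{cor:formula}, using the uniqueness of the $\ast_{\underline J}$-decomposition (Definition \ref{stellina}), the bijection $x^\alpha\mapsto x^{\underline\alpha}$ from $sB_{J}$ onto $B_{\underline J}$ (Definition \ref{defsupermin} and Lemma \ref{proptagliati}), and the fact that a strongly reduced polynomial has no monomial in $\underline J$. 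Applied to a strongly reduced $D\in(s\mathcal G)K[\widetilde C,x]$, the uniqueness forces the writing to be $D=0+D$, whence $\Coeff_x(D)\subseteq\mathfrak U$; since $\mathfrak D_2$ is generated by precisely such coefficients, $\mathfrak D_2\subseteq\mathfrak U$. For $\mathfrak D_1$, fix $x^\alpha\in B_{J}\setminus sB_{J}$: the polynomial $H_\beta$ furnished by hypothesis (\ref{formuleconsuperminimals1}) for $x^\beta=x^\alpha$ and the polynomial $H_\alpha$ of Definition \ref{elim} are both strongly reduced representatives of a power of $x_0$ times $x^\alpha$ modulo $(s\mathcal G)$, hence they coincide up to a power of $x_0$ by the uniqueness just proved together with Theorem \ref{ridsm}, (\ref{ridsm-ii}); comparing their parts not divisible by that power of $x_0$ gives $\Coeff_x(H'_\alpha)\subseteq\mathfrak U$, i.e.\ $\mathfrak D_1\subseteq\mathfrak U$. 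Therefore $\widetilde{\mathfrak A}_J=(\mathfrak D_1\cup\mathfrak D_2)K[\widetilde C]\subseteq\mathfrak U\subseteq\widetilde{\mathfrak A}_J$, which gives the stated equalities.

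\emph{Main obstacle.} The delicate point is the key step above. Hypothesis (\ref{formuleconsuperminimals1}) supplies the reduction formula only for minimal generators of $J$ outside $sB_{J}$, while the induction (as in Proposition \ref{prop:formula}) inevitably produces arbitrary monomials of $J$ inside intermediate tails; reconstructing the case $x^\beta\in J\setminus B_{J}$ from an initial superminimal reduction step, while keeping the auxiliary powers of $x_0$ under control and checking that no reduction step ever creates a coefficient outside $\mathfrak U$, is the heart of the matter.
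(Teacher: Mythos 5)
Your proposal is correct and follows essentially the same route as the paper, whose (much terser) proof likewise says to rerun Proposition \ref{prop:formula}, the uniqueness of Corollary \ref{cor:formula} and the argument of Corollary \ref{last} with $s\mathcal G$, $\SGredPar$ and $\mathfrak U$ in place of $\mathcal G$, $\xrightarrow{\ \mathcal V_\ell\ }$ and $\mathfrak A_J^{EK}$, using hypotheses (\ref{formuleconsuperminimals1})--(\ref{formuleconsuperminimals2}) instead of Lemma \ref{formule}. The extra bookkeeping you supply (harmonizing the powers of $x_0$, reducing monomials of $J\setminus B_J$ in intermediate tails by a direct $\SGredPar$ step, and deriving $\mathfrak D_1\subseteq\mathfrak U$ from the uniqueness of strongly reduced forms) is exactly the detail the paper leaves implicit.
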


\begin{proof}
Thanks to (\ref{formuleconsuperminimals1}), we immediately have that $\mathfrak D_1\subseteq \mathfrak U$.

For the inclusion $\mathfrak D_2\subseteq \mathfrak U$, observe that if (\ref{formuleconsuperminimals1}) and (\ref{formuleconsuperminimals2}) hold for $\mathfrak U$, then we can use the same arguments of Proposition \ref{prop:formula} and obtain that:\\
for every $F_\alpha \in s\mathcal G$, for every $x^\delta$, there exists $t$ such that
\begin{equation}\label{eqsuper}
x_0^tx^\delta F_\alpha=\sum b_j x^{\eta_j}F_{\alpha_j}+H
\end{equation}
with $b_j\in K[\widetilde C]$, $F_{\alpha_j}\in s\mathcal G$, $x^{\eta_j}<_{\mathtt{Lex}}x^\delta$, $x^{\underline{\eta}_j+\underline{\alpha}_j}= x^{\underline{\alpha}_j}\ast_{\underline J}x^{\underline{\eta}_j}$ $\Supp_x(H_{\delta,\alpha})\subseteq \cN(J)$ and $\Coeff_x(H_{\delta,\alpha})\subseteq \mathfrak U$.

We can also prove the uniqueness of such a rewriting: thanks to the uniqueness of the decomposition by $\ast_{\underline J}$, the polynomials $x^{\eta_j}F_{\alpha_j}$ that can appear in \eqref{eqsuper} have pairwise different head terms.
So an analogous of Corollary \ref{cor:formula} holds for this setting.

Thanks to this uniqueness, as in Corollary \ref{last}, we get the non trivial inclusion of the thesis.
\end{proof}

Proposition \ref{formuleconsuperminimals} is very important from the computational point of view. Indeed, its condition (\ref{formuleconsuperminimals1}) allows to explicitely construct the set of polynomials $\mathfrak B$, namely to write a $J$-marked set $\widetilde{\mathcal G}$ in $K[\widetilde C,x]$, whose superminimal set is $s\mathcal G$. Using such a $J$-marked set in $K[\widetilde C,x]$, we can use either Theorem \ref{nostrobuch} or Corollary \ref{nostrobuchEK}  or Theorem \ref{paolo} to obtain a set of generators for $\widetilde{\mathfrak A}_J$.
 For instance,  the algorithm presented in the Appendix is based on Theorem \ref{paolo} and the proof of its correctness on Proposition  \ref{formuleconsuperminimals}.
In the future, we will  investigate which  is the best set of polynomials to start from in order to get a performing algorithm for the computation of equations for $\Mf(J)$. The correctness of such an algorithm will be verified by  the conditions of Proposition  \ref{formuleconsuperminimals}.

\subsection{\texorpdfstring{Relations among $\Mf({\underline J}_{\geq m})$ as $m$ varies}{Relations among marked families of m-truncation ideals as m varies}}

In this subsection we will compare the marked schemes constructed from different truncations of a saturated strongly stable ideal $\underline J$.  Let us consider two  integers $m',m$, ($m'<m$). If $I$ is an ideal in the $\underline{J}_{\geq m'}$-marked family $\Mf(\underline{J}_{\geq m'})$, then it is not difficult to show that $I_{\geq m}$ belongs to the marked family $\Mf(\underline{J}_{\geq m})$, namely that there is a injective map of sets $\Mf(\underline{J}_{\geq m'})\rightarrow \Mf(\underline{J}_{\geq m})$.  Aim of the present subsection is a scheme theoretical version of this fact; indeed we will prove that there is a closed embedding of schemes $\Mf(\underline{J}_{\geq m'})\hookrightarrow \Mf(\underline{J}_{\geq m})$ that induces the previous one on the sets of closed points.
It is sufficient to prove the existence of such a closed embedding for $m'=m-1$; in this case we denote the embedding map by $\phi_m$. Furthermore, we characterize the cases in which $\phi_m$ is a isomorphism.

To this purpose, the main tool we will use is the set of defining equations for a $J$-marked scheme obtained by superminimal reduction, namely the ideal $\widetilde{\mathfrak A}_J$; moreover  we will consider at the Zariski tangent space of $\Mf(\underline J_{\geq m})$ at the origin, denoted by $T_0(\Mf(\underline J_{\geq m}))$.

\begin{remark}\label{tangente}
As for any affine variety, if $\Mf(J)$ is defined by an ideal $\mathfrak{U}$ as a subscheme of an affine space $\Af^{N}$, then the Zariski tangent space $T_0(\Mf(J))$ is defined by the linear part of a set generators of $\mathfrak U$ so that it can be identified to a linear subspace of $\Af^{N}$. In the special case of marked schemes, it is quite easy to compute a set of generators for $T_0(\Mf(J))$, using the properties and techniques of \cite[Definition 3.4 and Proposition 4.3]{LR}, \cite[Proposition 3.4]{RT} and \cite[Theorem 3.2]{FR}.
\end{remark}

Theorem \ref{schisom} is inspired by an analogous result proved for Gr\"obner Strata in \cite[Theorem 4.7]{LR}. Given a monomial ideal $J$, the \emph{Gr\"obner Stratum} $\mathcal St(J,\prec)$ of $J$ w.r.t. a term order $\prec$ can be isomorphically projected in its Zariski tangent space at the origin $T_0(\mathcal St(J,\prec))$ (see \cite[Proposition 4.3]{LR}). Furthermore, if the origin is a smooth point, then $\mathcal St(J,\prec)$ is isomorphic to this tangent space. Unluckily, it is not true that for every strongly stable ideal $J$ there exists a term order $\prec$ such that $\Mf(J)\simeq \mathcal St(J,\prec)$, as shown in \cite[Appendix]{CR}, so in general we cannot project isomorphically $\Mf(J)$ into $T_0(\Mf(J))$.

 We introduce some useful notations: once fixed a saturated strongly stable ideal $\underline J$ and a positive integer $m$, we denote by
\begin{itemize}
\item $\mathcal G^{[m]}$ a $\underline J_{\geq m}$-marked set as in \eqref{JbaseC} and with $F_\beta^{[m]}$ a marked polynomial belonging to $\mathcal G^{[m]}$;
\item $C^{[m]}$ the set of parameters $C_{\alpha\gamma}^{[m]}$ appearing in the tails of the marked polynomials in $\mathcal G^{[m]}$;
\item $s\mathcal G^{[m]}$ the set of superminimals of $\mathcal G^{[m]}$;
\item $\widetilde{C}^{[m]}$ the subset of $C^{[m]}$ containing only the parameters $\widetilde{C}_{\alpha\gamma}^{[m]}$ appearing in the tails of marked polynomials in $s\mathcal G^{[m]}$;
\item $\widetilde{\mathfrak A}^{[m]}$ is the ideal  in $K[\widetilde C^{[m]}]$  defining $\Mf(\underline J_{\geq m})$ as a subscheme in $\mathbb A^{\vert\widetilde C^{[m]}\vert}$ (defined in Theorem  \ref{smallaffine}).
\end{itemize}

\begin{theorem}\label{schisom}
Let $\underline J$ be a saturated strongly stable ideal and let $m$ be any integer.
With the previous notations, the followings hold:
\begin{enumerate}[(i)]
\item \label{schisom_i}
$\Mf(\underline J_{\geq m-1})$ is a closed subscheme of $\Mf(\underline J_{\geq m})$ cut out by a suitable linear space. More precisely, $\widetilde{C}^{[m-1]}$ can be identified with a suitable subset of $\widetilde{C}^{[m]}$ so that the following diagram of schemes commutes:

\begin{equation}\label{diagramma}
\begin{tikzpicture}
\node (1) at (0,0) [] {$\Mf(\underline J_{\geq m-1})$};
\node (2) at (3,0) [] {$\Mf(\underline J_{\geq m})$};
\node (3) at (0,-1.5) [] {$\Af^{\vert \widetilde{C}^{[m-1]}\vert}$};
\node (4) at (3,-1.5) [] {$\Af^{\vert \widetilde{C}^{[m]}\vert}$};
\draw [right hook->] (1) --node[above]{\scriptsize $\phi_m$} (2);
\draw [right hook->] (1) -- (3);
\draw [right hook->] (2) -- (4);
\draw [right hook->] (3) -- (4);
\end{tikzpicture}
\end{equation}

\item \label{schisom_iiii}
Let $\Omega$ be the number of monomials $x^\alpha \in B_{\underline J}$ of degree $m+1$ divisible by $x_1$ and $\Theta:=\vert B_{\underline J}\cap S_{\leq m-1}\vert$; then,
\[\dim T_0(\Mf(\underline J_{\geq m}))\geq \dim T_0(\Mf(\underline J_{\geq m-1}))+ \Omega\cdot \Theta.\]
\item \label{schisom_iiiii}
$\Mf(\underline J_{\geq m-1})\simeq\Mf(\underline J_{\geq m})$ if  and only if either $\underline J_{\geq m-1}= \underline J_{\geq m}$ or no monomial of degree $m+1$ in $B_{\underline J}$ is divisible by $x_1$.
\end{enumerate}
In particular:
\[\Mf(\underline J_{\geq \rho-1})\simeq  \Mf(\underline J_{\geq m}), \text{ for every } m\geq \rho\]
where $\rho$ is the maximal degree of  monomials divisible by $x_1$ in $B_{\underline J}$.
\end{theorem}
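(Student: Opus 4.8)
The whole argument runs through the description of $\Mf(\underline J_{\geq m})$ by means of $\underline J_{\geq m}$-superminimal bases and the embedding $\Mf(\underline J_{\geq m})\hookrightarrow\Af^{|\widetilde C^{[m]}|}$ of Theorem \ref{smallaffine}. The first step is purely combinatorial. By Lemma \ref{proptagliati}, (\ref{tagli})--(\ref{tagliv}), for every $m$ the map $x^\delta\mapsto x^\delta\cdot x_0^{\max(0,\,m-|\delta|)}$ is a bijection of $B_{\underline J}$ onto $sB_{\underline J_{\geq m}}$: a generator $x^\delta$ of degree $\geq m$ produces the same superminimal for all $m$, while one of degree $\leq m-1$ produces superminimals of degrees $m-1$ and $m$ for two consecutive truncations, the latter being $x_0$ times the former. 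Since moreover $\cN(\underline J_{\geq m})_\ell=\cN(\underline J)_\ell$ for $\ell\geq m$ and $x_0\cdot\cN(\underline J)_{m-1}\subseteq\cN(\underline J)_m$, this identifies $\widetilde C^{[m-1]}$ with the subset of $\widetilde C^{[m]}$ consisting of those coordinates $\widetilde C^{[m]}_{\alpha\gamma}$ for which, whenever the corresponding superminimal has degree $m$, the monomial $x^\gamma$ is divisible by $x_0$; this gives the linear embedding $\Af^{|\widetilde C^{[m-1]}|}\hookrightarrow\Af^{|\widetilde C^{[m]}|}$ of \eqref{diagramma}, whose complementary coordinates are exactly the pairs $(x^\beta x_0^{m-|\beta|},x^\gamma)$ with $x^\beta\in B_{\underline J}$, $|\beta|\leq m-1$, and $x^\gamma\in\cN(\underline J)_m$, $x_0\nmid x^\gamma$.

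For (\ref{schisom_i}) the plan is to show that, under this identification, $\widetilde{\mathfrak A}^{[m-1]}$ is obtained from $\widetilde{\mathfrak A}^{[m]}$ by adjoining the complementary linear coordinates, so that $\phi_m$ is a closed immersion cut out by that linear space. On closed points $\phi_m$ is the map $I\mapsto\bigoplus_{t\geq m}I_t$, whose superminimal basis is obtained from that of $I$ by replacing each degree-$(m-1)$ polynomial $f$ by $x_0 f$ (still $\underline J$-reduced, by $x_0\cdot\cN(\underline J)_{m-1}\subseteq\cN(\underline J)_m$) and keeping the higher-degree ones. Scheme-theoretically one checks that the two defining ideals match by running the superminimal Buchberger criterion of Theorem \ref{paolo} (or Theorem \ref{nostrobuch}): the $L_1$-syzygies of $\underline J_{\geq m}$ are, via the identification, those of $\underline J_{\geq m-1}$ up to a factor $x_0$, and their superminimal reductions in the two truncations agree after multiplying by a suitable power of $x_0$ thanks to Theorem \ref{ridsm}, Lemma \ref{proptagliati}, (\ref{taglvi}) and Lemma \ref{x0V}; the $L_2$-syzygies that are genuinely new for $\underline J_{\geq m}$ either carry a factor $x_0$ and are handled the same way, or reduce to the previous case. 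Corollary \ref{last} and Proposition \ref{formuleconsuperminimals} then give $\widetilde{\mathfrak A}^{[m-1]}=\bigl(\widetilde{\mathfrak A}^{[m]}+(\text{complementary coordinates})\bigr)\cap K[\widetilde C^{[m-1]}]$, which is precisely (\ref{schisom_i}).

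For (\ref{schisom_iiii}), from the closed immersion of (\ref{schisom_i}) one gets a linear inclusion $T_0(\Mf(\underline J_{\geq m-1}))\subseteq T_0(\Mf(\underline J_{\geq m}))$, so it suffices to produce $\Omega\cdot\Theta$ elements of $T_0(\Mf(\underline J_{\geq m}))$ independent modulo it. Using the standard description (Remark \ref{tangente}) of $T_0(\Mf(J))$ as the space of assignments $x^\alpha\mapsto h_\alpha\in\langle\cN(J)_{|\alpha|}\rangle$, $x^\alpha\in sB_J$, annihilated by the linearized $L_1\cup L_2$ relations, I would attach to each pair consisting of a generator $x^\alpha\in B_{\underline J}$ of degree $m+1$ divisible by $x_1$ and a generator $x^\beta\in B_{\underline J}$ of degree $\leq m-1$ the tangent vector which is zero on every superminimal except the one with head $x^\beta x_0^{m-|\beta|}$, to which it assigns the monomial $x^\alpha/x_1\in\cN(\underline J)_m$ (not divisible by $x_0$, as $x^\alpha\in B_{\underline J}$). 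Checking that these vectors lie in $T_0(\Mf(\underline J_{\geq m}))$ amounts to verifying the linearized $L_1$- and $L_2$-relations; the only nontrivial ones involve a Borel move $\mathrm e^+_{1,j}$ applied to $x^\alpha$, and hold precisely because $x_1\mid x^\alpha$ forces $\mathrm e^+_{1,j}(x^\alpha)>_B x^\alpha$ to lie in $\underline J$. These $\Omega\cdot\Theta$ vectors are supported on distinct complementary coordinates, hence are linearly independent modulo $T_0(\Mf(\underline J_{\geq m-1}))$, giving the inequality.

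For (\ref{schisom_iiiii}): if $\underline J_{\geq m-1}=\underline J_{\geq m}$ the two marked schemes coincide. If no monomial of $B_{\underline J}$ of degree $m+1$ is divisible by $x_1$, I would prove $\phi_m$ is an isomorphism by showing the complementary linear coordinates already lie in $\widetilde{\mathfrak A}^{[m]}$, equivalently that for every $I'\in\Mf(\underline J_{\geq m})$ each superminimal of $I'$ with head of degree $m$ is divisible by $x_0$, so that dividing it by $x_0$ reconstructs the superminimal basis of a preimage in $\Mf(\underline J_{\geq m-1})$; here one uses that a monomial $x_1 x^\gamma$ with $x^\gamma\in\cN(\underline J)_m$, $x_0\nmid x^\gamma$, has degree $m+1$ and is divisible by $x_1$, hence under the hypothesis stays outside $B_{\underline J}$, and tracing the superminimal reduction of the appropriate $S$-polynomials forces the coefficient of $x^\gamma$ in the relevant tail into $\widetilde{\mathfrak A}^{[m]}$. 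Conversely, if $\underline J_{\geq m-1}\neq\underline J_{\geq m}$ (so $\Theta>0$) and some monomial of $B_{\underline J}$ of degree $m+1$ is divisible by $x_1$ (so $\Omega>0$), then $\Omega\Theta>0$ and by (\ref{schisom_iiii}) the Zariski tangent spaces at the distinguished (monomial) points of the two marked schemes have different dimensions, so the schemes are not isomorphic. Finally, since for $m\geq\rho$ no monomial of $B_{\underline J}$ of degree $m+1$ is divisible by $x_1$, all $\phi_m$ with $m\geq\rho$ are isomorphisms, whence $\Mf(\underline J_{\geq\rho-1})\simeq\Mf(\underline J_{\geq m})$ for every $m\geq\rho$. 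The main obstacle is the ``if'' direction of (\ref{schisom_iiiii}): proving \emph{scheme-theoretically} (not only on closed points) that the complementary linear forms lie in $\widetilde{\mathfrak A}^{[m]}$, which demands a careful analysis of how superminimal reduction of the $L_1$- and $L_2$-$S$-polynomials acts on degree-$(m+1)$ monomials divisible by $x_1$, using strong stability of $\underline J$ to decide which of them lie in $\underline J$; the bookkeeping in (\ref{schisom_i}), matching the two $S$-polynomial systems across the truncation while tracking powers of $x_0$, is the other delicate point, though conceptually routine given Lemma \ref{proptagliati}, Lemma \ref{x0V} and Proposition \ref{formuleconsuperminimals}.
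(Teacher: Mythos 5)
Your proposal is correct and follows essentially the same route as the paper: the same identification of $\widetilde{C}^{[m-1]}$ inside $\widetilde{C}^{[m]}$ (with the complementary coordinates being the coefficients of monomials not divisible by $x_0$ in the new degree-$m$ superminimals), the same use of Theorem \ref{smallaffine} and superminimal reduction to match $\widetilde{\mathfrak A}^{[m-1]}$ with $\widetilde{\mathfrak A}^{[m]}$ plus those coordinates, the same strong-stability argument showing that the $\Omega\cdot\Theta$ complementary coordinates attached to degree-$(m+1)$ generators divisible by $x_1$ occur only nonlinearly in the equations and hence give extra tangent directions at the origin, and the same two-pronged proof of (iii) (reduction of the relevant monomials under the hypothesis, tangent-dimension comparison for the converse). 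The delicate points you flag are exactly the ones the paper handles, in the same way.
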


\begin{proof}\
\begin{enumerate}[(i)]
\item
Thanks to Theorem \ref{smallaffine}, a $\underline J_{\geq m}$-marked scheme is defined by an ideal generated by polynomials of $K[\widetilde C^{[m]}]$ that are constructed using only the superminimals. So, now it is enough to prove that the set of superminimals $s\mathcal G^{[m-1]}$ corresponds to $s\mathcal G^{[m]}$ modulo a subset of the variables $\widetilde C^{[m]}$, in the following sense.

Consider $x^\alpha \in sB_{\underline J_{\geq m-1}}$. If $\vert \alpha\vert\geq m$, then $x^\alpha$ belongs to $sB_{\underline J_{\geq m}}$ and we can identify $F^{[m]}_\alpha \in s\mathcal G^{[m]}$ and $F^{[m-1]}_\alpha \in s\mathcal G^{[m-1]}$ (and in particular the variables in their tails: $\widetilde C^{[m]}_{\alpha \gamma}=\widetilde C^{[m-1]}_{\alpha \gamma}$).

If $\vert\alpha\vert=m-1$, then we can consider the corresponding superminimal element $F^{[m]}_\beta \in s\mathcal G^{[m]}$, with $x^\beta=x_0\cdot x^\alpha$. Then we identify the variable $\widetilde C_{\beta\delta'}^{[m]}$, which is the coefficient of a monomial in $\Supp_x(F_\beta^{[m]})$ of kind $x^{\delta'}=x_0\cdot x^\delta$, with the variable $\widetilde C_{\alpha \delta}^{[m-1]}$ which  is the coefficient of the  monomial $x^\delta$ in $\Supp_x(F_\alpha^{[m-1]})$.

We repeat this identifications for all $x^\alpha \in sB_{\underline J_{\geq m-1}}$ and we denote by $\overline {C}^{[m]}$ the subset of $\widetilde C^{[m]}$ containing the variables non-identified with variables of $\widetilde C^{[m-1]}$, that is the variables appearing as coefficients of monomials  not divisible by $x_0$ in the tails of polynomials in $s\mathcal G^{[m]}\setminus s\mathcal G^{[m-1]}$. Now, every polynomial in $s\mathcal G^{[m]}\mod (\overline C^{[m]})$ either belongs to $s\mathcal G^{[m-1]}$ or is a polynomials of $s\mathcal G^{[m-1]}$ multiplied by $x_0$. Thanks to Theorem \ref{smallaffine}, we have that
\begin{equation*}\widetilde{\mathfrak A}^{[m]}+\left(\overline C^{[m]}\right)\simeq \widetilde{\mathfrak A}^{[m-1]}.\end{equation*}
This relation among the ideals induces the embeddings of scheme of diagram \eqref{diagramma}.
\item
We now consider $x^\gamma \in B_{\underline J}$, $\vert \gamma\vert=m+1$, $x^\gamma$ divisible by $x_1$. We define $x^\beta:=x^\gamma/x_1$; observe that $x^\beta \in \cN(\underline J)$. Furthermore, $x^\beta$ is not divisible by $x_0$, otherwise $x^\gamma$ would be too.

Then, for every $x^{\underline\alpha} \in B_{\underline J}$ with $\vert\underline\alpha\vert\leq m-1$, there is $F_{\alpha}^{[m]}=x^{\underline\alpha} x_0^{m-\vert\underline\alpha\vert}-T(F_{\alpha}^{[m]}) \in s\mathcal G^{[m]}$ such that $x^\beta \in \Supp_x(T(F_{\alpha}^{[m]}))$. We focus on the coefficient $\widetilde C_{\alpha\beta}^{[m]}$ of $x^\beta$. Since $x^\beta$ is not divisible by $x_0$, $\widetilde C_{\alpha\beta}^{[m]}$ cannot be identified with a coefficient appearing in $F_{\alpha}^{[m-1]}=x^{\underline\alpha} x_0^{m-\vert\underline\alpha\vert-1}-T(F_{\alpha}^{[m-1]})\in s\mathcal G^{[m-1]}$. So $\widetilde C_{\alpha\beta}^{[m]}$ belongs to the subset of variables $\overline C^{[m]}$ defined in the proof of (\ref{schisom_i}).

We now use the construction of $T_0(\Mf(\underline J_{\geq m}))$ recalled in Remark \ref{tangente}. If we think about syzygies of the ideal $\underline J_{\geq m}$, we can see that in a $S$-polynomial, $F_{\alpha}^{[m]}$ is multiplied by a monomial $x^\delta$ divisible by $x_i$, $i>0$. In particular, $x^\delta\cdot x^\beta$ belongs to $\underline J_{\geq m}$: if $x_i=x_1$ we are done by construction, otherwise we apply the strongly stable property because $x_1 x^\beta x^\delta=
\frac{x^\gamma}{x_i}x_1x^\delta$ belongs to $\underline J_{\geq m}$. This means that the coefficient $\widetilde C_{\alpha\beta}^{[m]}$ does not appear in any equation defining $T_0(\Mf(\underline J_{\geq m}))$.

Applying this argument to the $\Omega$ monomials in $B_{\underline J}$ of degree $m+1$ which are divisible by $x_1$ and to the $\Theta$ monomials in $B_{\underline J}$ of degree $\leq m-1$, we obtain the result.

\item
If $\underline J_{\geq m}=\underline J_{\geq m-1}$, obviously $\Mf(\underline J_{\geq m})=\Mf(\underline J_{\geq m-1})$.
We now assume that $\underline J_{\geq m}\neq \underline J_{\geq m-1}$ and no monomial of degree $m+1$ in the monomial basis of $\underline J$  is divisible by $x_1$; we prove that every polynomial in $s\mathcal G^{[m]}$ either belongs to $s\mathcal G^{[m-1]}$ or it is the product of $x_0$ by the \lq\lq corresponding\rq\rq\  polynomial in $s\mathcal G^{[m-1]}$.

If $x^\alpha \in sB_{\underline J_{\geq m-1}}$ and $\vert\alpha\vert \geq m$, then $F^{[m]}_\alpha \in s\mathcal G^{[m]}$ and $F^{[m-1]}_\alpha \in s\mathcal G^{[m-1]}$ have the same shape and we can identify them letting $\widetilde C^{[m]}_{\alpha \gamma}=\widetilde C^{[m-1]}_{\alpha \gamma}$, as done in the proof of (\ref{schisom_i}). If $\vert\alpha\vert =m-1$, then  $x^\beta=x_0\cdot x^\alpha \in sB_{\underline J_{\geq m}}$ and all the monomials in the support of $x_0\cdot F^{[m-1]}_\alpha$ appear in the support of $F^{[m]}_\beta$ (and we identify their coefficients as above). In the support of $F^{[m]}_\beta$ there are  also some more monomials that are not divisible by  $x_0$. We will prove now that the coefficients of these last monomials in fact belong to $\widetilde{\mathfrak A}^{[m]}$.

Consider the monomial $x_0\cdot x_1\cdot x^\alpha$. If we perform its reduction using $s\mathcal G^{[m]}$, the first step of reduction will lead to
\[
x_0\cdot x_1\cdot x^\alpha\xrightarrow{\ s\mathcal G^{[m]}\ast\ }x_1T(F^{[m]}_\beta).
\]
Let $x^\gamma$ be a monomial of $\Supp( T(F^{[m]}_\beta))$. If $x_1\cdot x^\gamma \in \underline J_{\geq m}$, then $x_1\cdot x^\gamma =x^{\underline {\alpha}'}\ast_{\underline J} x^\eta$, with $x^{\underline {\alpha}'}\in B_{\underline J}$ and $x^\eta<_{\mathtt{Lex}}x_1$.
If $x^\eta=1$, then $\vert\underline{\alpha}'\vert=m+1$ and $x^{\underline{\alpha}'}$ is divisible by $x_1$, against the hypothesis.
Then $x^\eta=x_0^{\overline t}$, with $t>0$, and so the monomial $x_1\cdot x^\gamma\in \underline J_{\geq m}$ is actually divisible by $x_0$.
If $x_1\cdot x^\gamma \in \cN(\underline J_{\geq m})$, then this monomial is not further reducible, so that its coefficient belongs to $\widetilde {\mathfrak A}^{[m]}$.

Vice versa, by contradiction suppose now that $\underline J_{\geq m-1}\neq \underline J_{\geq m}$ and that there exists $x^\alpha \in B_{\underline J}$ divisible by $x_1$, $\vert\alpha\vert=m+1$.
Using (\ref{schisom_iiii}), we have that $T_0(\Mf(\underline J_{\geq m-1}))\not \simeq T_0(\Mf(\underline J_{\geq m}))$ because $\dim T_0(\Mf(\underline J_{\geq m-1}))<\dim T_0(\Mf(\underline J_{\geq m}))$, and so $\Mf(\underline J_{\geq m-1}) \not \simeq \Mf(\underline J_{\geq m})$.
\end{enumerate}

For the last part of the statement, note that if $\rho$ is the maximal degree of a monomial divisible by $x_1$ in the monomial basis of $\underline J$, for every $m\geq \rho$, applying iteratively (\ref{schisom_iiiii}) we obtain
\[
\Mf(\underline J_{\geq \rho-1})\simeq \Mf(\underline J_{\geq m}).\qedhere
\]
\end{proof}

In the above setting, if $p(t)$ is the Hilbert polynomial of $S/\underline J$ and $r$ is its Gotzmann number, it is worth  considering the $r$-truncation of $\underline J$. Indeed, in \cite{BLR} the authors prove that $\Mf(\underline J_{\geq r})$ is naturally isomorphic to an open subset of the Hilbert scheme $\hilbp$. We recall that $r$ is the maximum among the regularities of ideals that are closed points of $\hilbp$; hence, $r\geq \reg(\underline J)\geq \rho-1$.
Then Theorem \ref{schisom} allows us to study such an open subset of $\hilbp$ embedded in an affine space of lower dimension than the expected one. More precisely:
\begin{corollary}\label{cornumpar}
Let $\underline J$ be a saturated strongly stable ideal, and let $\rho$ be the maximal degree of  monomials divisible by $x_1$ in $B_{\underline J}$.
For every  $m\geq \rho-1$, $\Mf(\underline J_{\geq m})$ can be embedded in an affine space of dimension
\begin{equation}\label{numpar}
\vert\widetilde C^{[\rho-1]}\vert=\sum_{x^{\alpha}\in sB_{\underline J_{\geq \rho-1}}}\vert \cN(\underline J)_{\vert\alpha\vert}\vert\leq \vert B_{\underline J}\vert \cdot p(r'),
\end{equation}
where $r'=\reg(\underline J)$ and $p(t)$ is the Hilbert polynomial of $S/\underline J$.
\end{corollary}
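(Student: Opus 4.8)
The plan is to deduce this corollary entirely from Theorems \ref{schisom} and \ref{smallaffine} together with a combinatorial count on the monomial basis $B_{\underline J}$. First I would dispose of the embedding claim: by the last assertion of Theorem \ref{schisom}, $\Mf(\underline J_{\geq m})\simeq\Mf(\underline J_{\geq\rho-1})$ for every $m\geq\rho$, while the case $m=\rho-1$ is trivial, and since no integer lies strictly between $\rho-1$ and $\rho$ this yields the isomorphism for all $m\geq\rho-1$. Then Theorem \ref{smallaffine}, applied to the strongly stable $(\rho-1)$-truncation $\underline J_{\geq\rho-1}$, embeds $\Mf(\underline J_{\geq\rho-1})$ as a closed subscheme of $\Af^{\vert\widetilde C^{[\rho-1]}\vert}$ with $\vert\widetilde C^{[\rho-1]}\vert=\sum_{x^\alpha\in sB_{\underline J_{\geq\rho-1}}}\vert\cN(\underline J_{\geq\rho-1})_{\vert\alpha\vert}\vert$. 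Every superminimal generator $x^\alpha$ of $\underline J_{\geq\rho-1}$ has degree $\geq\rho-1$, so Lemma \ref{proptagliati}, (\ref{tagliv}) lets me replace $\cN(\underline J_{\geq\rho-1})_{\vert\alpha\vert}$ by $\cN(\underline J)_{\vert\alpha\vert}$, which is exactly the equality in \eqref{numpar}.

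For the inequality, the key point I will establish is that $x^\alpha\mapsto x^{\underline\alpha}$ is a bijection between $sB_{\underline J_{\geq\rho-1}}$ and $B_{\underline J}$, so that the sum above has precisely $\vert B_{\underline J}\vert$ summands. Here I will use that, $\underline J$ being saturated and strongly stable, no minimal generator of $\underline J$ is divisible by $x_0$ (from the formula for $\sat(\underline J)$ recalled in Section \ref{sec:notation}); hence $x^{\underline\beta}=x^\beta$ for each $x^\beta\in B_{\underline J}$, and by Lemma \ref{proptagliati}, (\ref{tagli}) and (\ref{taglii}), either $x^\beta$ itself (if $\vert\beta\vert\geq\rho-1$) or $x^\beta x_0^{\rho-1-\vert\beta\vert}$ (if $\vert\beta\vert<\rho-1$) is a superminimal generator of $\underline J_{\geq\rho-1}$ mapping onto $x^\beta$, which gives surjectivity; injectivity is immediate, since two superminimal generators with the same image differ only by a power of $x_0$, hence one divides the other and they must coincide.

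Next I would bound each summand. Every $x^\alpha\in sB_{\underline J_{\geq\rho-1}}$ satisfies $\vert\alpha\vert\leq r'=\reg(\underline J)$: either $\vert\alpha\vert=\rho-1\leq r'$ (recall $\reg(\underline J)\geq\rho-1$), or $\vert\alpha\vert>\rho-1$, in which case each proper divisor of $x^\alpha$ has degree $\geq\rho-1$ and lies outside $\underline J$, so $x^\alpha\in B_{\underline J}$ and $\vert\alpha\vert\leq\reg(\underline J)$. Moreover, since no minimal generator of $\underline J$ involves $x_0$, the variable $x_0$ is a non-zero-divisor on $S/\underline J$, so the Hilbert function $d\mapsto\vert\cN(\underline J)_d\vert=\dim_K(S/\underline J)_d$ is non-decreasing; as it coincides with $p(d)$ for $d\geq\reg(S/\underline J)=r'-1$, I obtain $\vert\cN(\underline J)_{\vert\alpha\vert}\vert\leq\vert\cN(\underline J)_{r'}\vert=p(r')$ for every superminimal generator. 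Summing over the $\vert B_{\underline J}\vert$ summands gives $\vert\widetilde C^{[\rho-1]}\vert\leq\vert B_{\underline J}\vert\cdot p(r')$.

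Since the substantial content — the closed embedding $\phi_m$ and the isomorphism criterion — is already carried by Theorem \ref{schisom}, the corollary is essentially bookkeeping; the only delicate points are the bijection $sB_{\underline J_{\geq\rho-1}}\leftrightarrow B_{\underline J}$ and the monotonicity of the Hilbert function of $S/\underline J$, and I expect the verification of the bijection via Lemma \ref{proptagliati} to be the main, though still routine, obstacle. One should also keep in mind the degenerate case in which no monomial of $B_{\underline J}$ is divisible by $x_1$: there $\rho$ is read off with the convention that every truncation is already isomorphic to the relevant open subset of $\hilbp$, and the bound holds a fortiori.
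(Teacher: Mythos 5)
Your proof is correct and follows essentially the same route as the paper: the equality comes from Theorem \ref{schisom} combined with the count of $\widetilde C^{[\rho-1]}$ from Theorem \ref{smallaffine} (plus Lemma \ref{proptagliati}), and the inequality from the fact that superminimal generators have degree $\leq r'=\reg(\underline J)$ together with $\vert\cN(\underline J)_d\vert\leq p(r')$ for $d\leq r'$. You merely make explicit two points the paper leaves implicit — the bijection $sB_{\underline J_{\geq\rho-1}}\leftrightarrow B_{\underline J}$ giving $\vert B_{\underline J}\vert$ summands, and the monotonicity of the Hilbert function via $x_0$ being a non-zero-divisor — both of which are accurate.
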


\begin{proof}
The equality of \eqref{numpar} directly follows from Theorem \ref{schisom}.
For the inequality, we simply need to observe that the regularity  $r'$ of a strongly stable ideal is simply the maximum of the degrees of its monomial generators; hence every monomial in $sB_{\underline J_{\geq \rho-1}}$ has degree $\leq r'$. Furthermore $r'$ is greater than or equal to the regularity of the Hilbert function of $S/\underline J$, thus $\vert \cN(\underline J)_{\vert\alpha\vert}\vert\leq \cN(\underline J_{r'})=p(r')$.

An equivalent proof follows from the diagram \eqref{diagramma} of Theorem \ref{schisom}.
\end{proof}

\section{Examples}\label{secexamples}
In the hypothesis that the field $K$ has characteristic 0, the methods of computations developed in the previous sections  can be applied to the study of Hilbert schemes: indeed, for $m$ big enough, $\Mf(\underline J_{\geq m})$ corresponds to an open subset of the Hilbert scheme parameterizing the ideals having the same Hilbert polynomial as $S/\underline J$ (see \cite{BLR}).

Now we give some examples for applications of the obtained results, mainly Theorem \ref{nostrobuch}, Theorem \ref{paolo} and Theorem \ref{schisom}.  We keep on using the notations introduced before Theorem \ref{schisom}.

\begin{example}\label{espunti}
Let $\underline J$ be the saturated strongly stable ideal $(x_n,\dots,x_2,x_1^{\mu})\subseteq S=K[x_0,\dots,x_n]$. Observe that $\underline J$ is a ${\mathtt{Lex}}$-segment, the Hilbert polynomial of $S/\underline J$ is $p(t)=\mu$, the regularity of $\underline J$ is $r'=\mu$, and also $\rho=\mu$.
By Corollary \ref{cornumpar}, $\Mf(\underline J)$ can be embedded into an affine space of dimension $2n-2+\mu$. Using the criterion of Theorem \ref{nostrobuch}, we can see that actually $\Mf(\underline J)\simeq \Af^{2n-2+\mu}$, as shown also  in \cite{RT}. By Theorem \ref{schisom}, (\ref{schisom_iiiii}), $\Mf(\underline J_{\geq m})$ is isomorphic to $\Mf(\underline J)$  for every $m\leq \mu-2$.

It is well-known that $\mathrm{Proj}\, (S/\underline J)$ is the {\tt Lex}-point of $\mathcal Hilb^n_\mu$ and lies on a component of dimension $n\mu$ (see \cite{RS}). Then $\Mf(\underline J_{\geq m})$ is not isomorphic to an open subset of $\mathcal Hilb^n_\mu$ for every $m\leq \mu -2$.

On the other hand, the same reasonings above leads to  $\Mf(\underline J_{\geq \mu})\simeq \Af^{n\mu}$ so that $\Mf(\underline J_{\geq \mu})$ is  an open subset of $\mathcal Hilb^n_\mu$. This is shown also  in \cite{RT}.
\end{example}

\begin{example}
We consider the strongly stable  saturated ideal
\[
\underline J=(x_2^3, x_1x_2^2, x_1^2x_2, x_1^5)\subseteq K[x_0,x_1,x_2].
\]
It corresponds to a point of $\mathcal Hilb^2_{8}$, with Gotzmann number $r=8$, the regularity   $r'$ of $\underline J$ is 5, and the same value for $\rho$.
By Theorem \ref{schisom}  we have that $\Mf(\underline J_{\geq 4})\simeq\Mf(\underline J_{\geq r})$.
Observe that $\underline J_{\geq 4}$ is not segment w.r.t. any term order (see \cite[Appendix]{CR}), hence in this case the results of \cite{LR} do not apply.

In \cite[Appendix]{CR}, the authors first consider {$\Mf(\underline J_{\geq 4})$} as an affine subscheme of $A^{64}$ and then show that $45$ of the variables can be eliminated, but using a time-consuming process of elimination of variables. By Corollary \ref{cornumpar}, we can directly embed $\Mf(\underline J_{\geq r})$ in an affine space of dimension $32$, and we have to eliminate only $13$ of the remaining variables.
\end{example}

\begin{example}\label{ex:4tP3}
We take $p(t)=4t$, $n=3$, $q(t)=\binom{3+t}{3} -p(t) = \binom{3+t}{3}-4t$; the Gotzmann number of $p(t)$ is $r=6$. The Hilbert scheme $\hilb^3_{4t}$ can be considered as a subscheme of the Grassmannian $\mathbb G=\mathbb G(q(6),K[x]_6)$  of linear spaces of dimension $q(6)=60$ in the vector space $K[x]_6$ of dimension $\binom{3+6}{3}=84$ (see \cite[Section 1]{BLR} for some details about this construction). Therefore equations for $\hilb^3_{4t}$ involve  $E=\binom{84}{60}-1\sim 6\cdot 10^{20}$ \Pl\ coordinates. We can obtain an open cover of $\hilb^3_{4t}$ by the non-vanishing of each \Pl\ coordinate of $\mathbb G$: we get $E$ open subsets, each of them  isomorphic to a subscheme of $\Af^{1440}$.

In \cite{BLR} the authors consider a different open cover (up to the action of $PGL(4)$) of $\hilb^3_{4t}$, formed by 4 open subsets only, isomorphic to a marked scheme of a suitable truncation of the saturated strongly stable monomial ideals   $\underline J_i$, $i=1,2,3, 4$ in $K[x]$. We can choose for every $i$ the truncation $m=r=6$, nevertheless in order to perform computations with a lower number of variables, it is better to choose the truncations according to Theorem \ref{schisom}.

We denote the cardinality of the monomial basis of $\underline J_i$ by $\sigma_i$.  In the following table we list the dimensions of the different affine spaces where we can embed the marked schemes, using Theorem \ref{schisom} and Corollary \ref{cornumpar}.
 \[
\begin{array}{|c|c|c|c|c|c|c|}
\cline{2-7}
 \multicolumn{1}{c|}{} & \phantom{\Big\vert}\text{Monomial basis of } \underline{J}_i  & \reg(\underline J_i) & \sigma_i &  \rho_i-1 &\sigma_i p\big(\reg(\underline{J}_i)\big) &\left\vert \widetilde C^{[\rho_i-1]}\right\vert\\
 \hline
  \underline J_1 & \phantom{\Big\vert} x_3^2,x_3x_2,x_2^3\phantom{\Big\vert}& 3 & 3 & -1& 36 & 28\\
 \hline
  \underline J_2&\phantom{\Big\vert} x_3^2,x_3x_2,x_3x_1^2 ,x_2^4\phantom{\Big\vert} & 4 & 4 &2& 64 & 44\\
 \hline
 \underline J_3& \phantom{\Big\vert} x_3^2,x_3x_2,x_3x_1,x_2^5,x_2^4x_1\phantom{\Big\vert} & 5& 5 & 4&100 & 88\\
 \hline
 \underline J_4& \phantom{\Big\vert} x_3,x_2^5,x_2^4x_1^2\phantom{\Big\vert} & 6 & 3 &5 & 72 & 64\\
 \hline
\end{array}
\]

Observe that for $\underline J_1$ and $\underline J_2$, the truncation giving an open subset of $\hilb^3_{4t}$ is exactly the saturated ideal.

$\underline J_4$ is the ${\mathtt{Lex}}$-segment ideal: $\Mf((\underline J_4)_{\geq 5})$ is isomorphic to $\Af^{23}$ (see \cite[Theorem 7.3]{LR}). In this case we should further eliminate $41$ variables.
This means that  our bounds are not in general \lq\lq sharp\rq\rq, however the computational consequences of Theorem \ref{schisom} are significant and our results allow the  treatment of non-trivial cases that cannot be handled with \lq\lq classical\rq\rq\ techniques.
\end{example}

\section*{Appendix. A pseudocode description of the algorithm for computing a $J$-marked scheme}

We now describe a prototype of the algorithm for computing $J$-marked families based on Proposition \ref{formuleconsuperminimals} and on the analogous of the set $L_1$ defined in Theorem \ref{paolo}. The ideal $J$ is always supposed to be a strongly stable $m$-truncation ideal.

Let us suppose that the following functions are made available.
\begin{itemize}
\item $\textsc{generators}(J)$. It determines the monomial basis of $J$.
\item $\textsc{superminimalGenerators}(J)$. It determines the superminimal generators of $J$.
\item $\textsc{superminimalReduction}(H,sG)$. Given a $J$-marked superminimal set $sG$ and a polynomial $H$, it returns a pair $(t,h)$ where $t$ is the minimal power of $x_0$ such that there is a  superminimal reduction of $x_0^t H$ to a strongly reduced polynomial and $h$ is such polynomial, namely $x_0^t H \SGred h$ (as in Theorem \ref{ridsm}, (\ref{ridsm-ii})).
\item $\textsc{quotientAndRemainder}(H,t)$. Given a polynomial $H$ and a non-negative integer $t$, it returns the pair of polynomials $(H',H'')$ such that $H = H' + H''x_0^t$.
\item \textsc{pairsL1}$(sG)$. Given a $J$-marked superminimal set $sG$, it computes the pairs of polynomials belonging to the set $\mathcal{L}_1$ (analogous of the set $L_1$ of Theorem \ref{paolo}).
\item \textsc{coeff}$(H,x^\alpha)$. It returns the coefficient of the monomial $x^\alpha$ in the polynomial $H$ (obviously 0 if $x^\alpha \notin \Supp(H)$).
\end{itemize}

\begin{algorithm}[H]
\begin{algorithmic}[1]
\STATE $\textsc{markedScheme}(J)$
\REQUIRE $J \subset K[x_0,\ldots,x_n]$ strongly stable $m$-truncation ideal.
\ENSURE an ideal defining the marked scheme $\Mf(J)$.
\STATE $B_J \leftarrow \textsc{generators}(J)$;
\STATE $sB_J \leftarrow \textsc{superminimalGenerators}(J)$;
\STATE $\widetilde{\mathcal{G}} \leftarrow \emptyset$; \quad $s\mathcal{G} \leftarrow \emptyset$;
\FORALL{$x^\alpha \in sB_J$}
\STATE $F_{\alpha} \leftarrow x^\alpha$;
\FORALL{$x^\beta \in \cN(J)_{\vert\alpha\vert}$}
\STATE $F_{\alpha} \leftarrow F_{\alpha} + \widetilde{C}_{\alpha\beta} x^\beta$;
\ENDFOR
\STATE $\widetilde{\mathcal{G}} \leftarrow \widetilde{\mathcal{G}}\cup \{F_{\alpha}\}$;
\STATE $s\mathcal{G} \leftarrow s\mathcal{G} \cup \{F_{\alpha}\}$;
\ENDFOR
\STATE $\textsf{equations} \leftarrow \emptyset$;
\STATE $\overline{B}_J \leftarrow B_J \setminus sB_J$;
\FORALL{$x^\alpha \in \overline{B}_J$}\label{start1}
\STATE $(t,H) \leftarrow \textsc{superminimalReduction}(x^\alpha,s\mathcal{G})$;
\STATE $(H',H'') \leftarrow \textsc{quotientAndRemainder}(H,t)$;
\FORALL{$x^\eta \in \Supp(H')$}
\STATE $\textsf{equations} \leftarrow \textsf{equations} \cup \{\textsc{Coeff}(H',x^\eta)\}$;
\ENDFOR
\STATE $\widetilde{\mathcal{G}} \leftarrow\widetilde{\mathcal{G}} \cup \{x^\alpha - H''\}$;
\ENDFOR\label{stop1}
\STATE\label{start2} $\mathcal{L}_1 \leftarrow \textsc{pairsL1}(s\mathcal{G})$;
\FORALL{$(F_{\alpha},F_{\alpha'}) \in \mathcal{L}_1$}
\STATE\label{SMredALG} $(t,H) \leftarrow \textsc{SuperminimalReduction}\big(S(F_{\alpha},F_{\alpha'}),s\mathcal{G}\big)$;
\FORALL{$x^\eta \in \Supp(H)$} \label{eq2start}
\STATE $\textsf{equations} \leftarrow \textsf{equations} \cup \{\textsc{Coeff}(H,x^\eta)\}$;
\ENDFOR\label{eq2stop}
\ENDFOR\label{stop2}
\RETURN $(\textsf{equations})$;
\end{algorithmic}
\end{algorithm}

\begin{algCorrect}
The algorithm \textsc{markedScheme} is correct.
\end{algCorrect}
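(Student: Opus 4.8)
The plan is to show that the output ideal \textsf{equations} of \textsc{markedScheme}$(J)$ coincides with the ideal $\widetilde{\mathfrak A}_J$ of Theorem \ref{smallaffine}, so that by that theorem it defines $\Mf(J)$ as a subscheme of $\Af^{\vert\widetilde C\vert}$. The strategy is to identify, step by step, which generators of $\widetilde{\mathfrak A}_J=(\mathfrak D_1\cup\mathfrak D_2)K[\widetilde C]$ are produced by each block of the algorithm, and conversely to check that every element the algorithm adds to \textsf{equations} lies in $\widetilde{\mathfrak A}_J$; the latter (soundness) is the easy direction, while the former (completeness) is where Proposition \ref{formuleconsuperminimals} does the work.

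First I would analyze the loop over $\overline B_J = B_J\setminus sB_J$ (lines \ref{start1}--\ref{stop1}). For each $x^\alpha\in\overline B_J$ the call \textsc{superminimalReduction}$(x^\alpha,s\mathcal G)$ returns the pair $(t,H)$ with $x_0^t\cdot x^\alpha\SGredPar H$ and $H$ strongly reduced, which is exactly the datum of Definition \ref{elim}; splitting $H=H'+x_0^t\,H''$ via \textsc{quotientAndRemainder} reproduces the decomposition $H_\alpha=H'_\alpha+x_0^t H''_\alpha$ there. Hence the $x$-coefficients of $H'$ added in this block are precisely $\mathfrak D_1$, and the polynomials $x^\alpha-H''$ appended to $\widetilde{\mathcal G}$ are exactly the marked polynomials completing $s\mathcal G$ to the $J$-marked set $\widetilde{\mathcal G}$ in $K[\widetilde C,x]$ whose superminimal set is $s\mathcal G$ (this is the constructive content of Proposition \ref{formuleconsuperminimals}(\ref{formuleconsuperminimals1}): the formula $x_0^t\cdot x^\beta=\sum b_i x^{\eta_i}F_{\alpha_i}+H_\beta$ produced by the superminimal reduction witnesses $\Coeff_x(H'_\beta)\subseteq\mathfrak D_1$, and $x^\beta-H''_\beta$ is well-defined by uniqueness in Theorem \ref{ridsm}(\ref{ridsm-ii})). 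Then I would turn to the second loop (lines \ref{start2}--\ref{stop2}): it ranges over the pairs $\mathcal L_1$ (the analogue of $L_1$ of Theorem \ref{paolo} for $\widetilde{\mathcal G}$), and for each pair it superminimally reduces the $S$-polynomial $S(F_\alpha,F_{\alpha'})$, recording the $x$-coefficients of the strongly reduced remainder $H$. By Theorem \ref{ridsm}, such an $H$ is a strongly reduced polynomial in $(s\mathcal G)K[\widetilde C,x]\subseteq\mathfrak I_J$, so these coefficients lie in $\mathfrak D_2\subseteq\widetilde{\mathfrak A}_J$; that gives soundness.

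For completeness I would argue as follows: set $\mathfrak U:=(\textsf{equations})K[\widetilde C]$. By the two blocks above, $\mathfrak D_1\subseteq\mathfrak U$ and, for every pair in $\mathcal L_1$, the superminimal reduction of the corresponding $S$-polynomial yields a formula of the type required in Proposition \ref{formuleconsuperminimals}(\ref{formuleconsuperminimals2}); running the inductive argument in the proof of Theorem \ref{paolo} (essentially reproving $x_i\cdot s\mathcal G\subseteq\langle s\mathcal G\rangle$ modulo $\mathfrak U$, using also $L_2$-type pairs, which for a $J$-marked set already in $K[\widetilde C,x]$ reduce to the $L_1$ computations) shows that condition (\ref{formuleconsuperminimals2}) of Proposition \ref{formuleconsuperminimals} holds for $\mathfrak U$. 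Since also (\ref{formuleconsuperminimals1}) holds (again witnessed by the first loop), Proposition \ref{formuleconsuperminimals} gives $\mathfrak U=\widetilde{\mathfrak A}_J$, and we are done. Termination is immediate: all loops are over finite sets, and each \textsc{superminimalReduction} call terminates by Theorem \ref{ridsm}(\ref{ridsm-i})--(\ref{ridsm-ii}).

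The main obstacle I anticipate is the completeness direction, specifically verifying that the restricted family of $S$-polynomials actually used by the algorithm — the pairs in $\mathcal L_1$ together with the reductions of the non-superminimal generators — suffices to force conditions (\ref{formuleconsuperminimals1}) and (\ref{formuleconsuperminimals2}) of Proposition \ref{formuleconsuperminimals} to hold for $\mathfrak U$. This amounts to re-deriving, inside the parametrized ring $K[\widetilde C,x]$ and modulo $\mathfrak U$ rather than over a fixed $I$, the inductive step of the proof of Theorem \ref{paolo}, keeping careful track of which multiples $x_i F_\alpha$ are reachable from $\mathcal L_1$-reductions; the delicate point is that, because $\widetilde{\mathcal G}$ already lives in $K[\widetilde C,x]$, the $L_2$-pairs of Theorem \ref{paolo} become consequences of the $\mathcal L_1$-pairs and of condition (\ref{formuleconsuperminimals1}), and this equivalence must be checked explicitly.
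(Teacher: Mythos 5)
Your skeleton coincides with the paper's: read the first loop as the construction of Definition \ref{elim} (so that the coefficients of $H'$ give $\mathfrak D_1$ and the polynomials $x^\alpha-H''$ complete $s\mathcal G$ to $\widetilde{\mathcal G}$), observe that the coefficients collected in the second loop lie in $\mathfrak D_2\subseteq\widetilde{\mathfrak A}_J$, and conclude $(\textsf{equations})=\widetilde{\mathfrak A}_J$ via Proposition \ref{formuleconsuperminimals}, with termination from Theorem \ref{ridsm}. The one genuine gap is exactly the step you flag yourself: for completeness you propose to re-run the inductive argument of Theorem \ref{paolo} modulo $\mathfrak U$, invoking $L_2$-type pairs and asserting, without proof, that for a marked set already written in $K[\widetilde C,x]$ these ``reduce to the $\mathcal L_1$ computations''. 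As written, condition (\ref{formuleconsuperminimals2}) is left resting on that unverified claim.

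The point you are missing is that this detour is unnecessary, because condition (\ref{formuleconsuperminimals2}) is witnessed \emph{directly} by the $\mathcal L_1$ loop. The condition quantifies only over superminimal $F_\alpha\in s\mathcal G$ and variables $x_i>\min(x^{\underline\alpha})$; for each such pair the Eliahou--Kervaire decomposition $x_i x^{\underline\alpha}=x^{\underline\alpha'}\ast_{\underline J}x^\eta$, with $x^\eta<_{\mathtt{Lex}}x_i$ by Lemma \ref{descLex}, yields an $\mathcal L_1$ pair $(F_\alpha,F_{\alpha'})$, and the superminimal reduction computed at line \ref{SMredALG}, rewritten through Lemma \ref{lexrid}, is literally a formula
\[
x_0^{\overline t}\, x_i F_{\alpha} \;=\; x^{\overline\eta} F_{\alpha'} \;+\;\sum b_j x^{\eta_j}F_{\beta_j}\;+\;H,
\]
with $F_{\alpha'},F_{\beta_j}\in s\mathcal G$, the multipliers $<_{\mathtt{Lex}}x_i$, $H$ strongly reduced and $\Coeff_x(H)\subseteq\mathfrak U$ by lines \ref{eq2start}--\ref{eq2stop}. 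Hence (\ref{formuleconsuperminimals2}) holds for $\mathfrak U$ by inspection, with no induction and no $L_2$ pairs at all: the inductive rewriting you want to redo (and the reason non-superminimal multiples never need to be touched) has already been carried out once and for all inside the proof of Proposition \ref{formuleconsuperminimals}, which mimics Proposition \ref{prop:formula} and Corollaries \ref{cor:formula} and \ref{last}. Replacing your deferred argument by this direct verification closes the completeness direction and makes your proof essentially the paper's.
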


\begin{proof}
To prove that the algorithm terminates it is sufficient to recall that   the superminimal reduction is Noetherian (Theorem \ref{ridsm} (\ref{ridsm-i})).

Now we show that the  algorithm \textsc{markedScheme} returns a set of generators for the ideal defining $\Mf(J)$. The starting point is the  $J$-marked  superminimal set $s\mathcal G$ given  in Definition \ref{decomposizione}  , having parameters in $\widetilde C$ as coefficients of every monomial in the tails and get a set  $\textsf{equations}$ of polynomials in $K[\widetilde C]$.  We claim that the ideal $\mathfrak{U}$ generated by  $\textsf{equations}$ coincides with the ideal $\widetilde{ \mathfrak{A}}_J$ of Theorem \ref{smallaffine}, by Proposition \ref{formuleconsuperminimals}.

Indeed in the first part (lines \ref{start1}-\ref{stop1}), the algorithm computes the superminimal reduction $H$ of each monomial $x^\alpha \in B_J\setminus sB_J$ and it imposes the conditions required by Proposition \ref{formuleconsuperminimals} (\ref{formuleconsuperminimals1}), in other words the algorithm computes the set $\mathfrak D_1\subseteq \widetilde{\mathfrak A}_J$ of Definition \ref{elim}. At the same time, the algorithm constructs the $J$-marked set $\widetilde{\mathcal G}\subset K[\widetilde C,x]$.

In the second part (lines \ref{start2}-\ref{stop2}), the algorithm considers pairs of superminimal generators  $(F_\alpha,F_{\alpha'})$ such that  $x_ix^\alpha=x^{\underline \alpha'}\ast_{\underline J}x^\eta$. Recall that $x^\eta <_\mathtt{Lex} x_i$ by Lemma \ref{descLex}. These couples of polynomials in $s\mathcal G$ correspond to the couples of the set $L_1$ in Theorem \ref{paolo}.

At line \ref{SMredALG} of the algorithm we compute the superminimal reduction of the associated $S$-polynomial
\[
x_0^t S(F_{\alpha},F_{\alpha}') = x_0^{t} \left(x_i x_0^{t'} F_{\alpha} -  x^\eta x_0^{t''} F_{\alpha'}\right) \SGredPar H \qquad x_i x_0^{t'} = \frac{\text{lcm}(x^\alpha,x^{\alpha'})}{x^\alpha},\ x^\eta x_0^{t''}= \frac{\text{lcm}(x^\alpha,x^{\alpha'})}{x^{\alpha'}}
\]
that is applying Lemma \ref{lexrid}
\[
x_0^{t}(x_0^{t'} x_i F_{\alpha} - x_0^{t''} x^\eta F_{\alpha'}) - \sum b_jx^{\eta_j}F_{\beta_j} =  H
\]
with $b_j \in K[\widetilde{C}]$, $F_{\beta_j} \in s\mathcal{G}$, $x^{\eta_j}<_{\mathtt{Lex}}x_i x_0^{t'}$ and $x^{\underline{\eta_j}}<_{\mathtt{Lex}}x_i$, so that
\[
x_0^{\overline{t}} x_i F_{\alpha} = x^{\overline\eta} F_{\alpha'} +\sum b_jx^{\eta_j}F_{\beta_j} + H.
\]
The polynomial $H$ is strongly reduced and it belongs to the ideal $({s\mathcal G})\subseteq K[\widetilde C,x]$, then its $x$-coefficients belong to $\mathfrak D_2\subseteq\widetilde{\mathfrak{A}}_J$.

Then by construction (lines \ref{eq2start}-\ref{eq2stop}), $\mathfrak U$ is contained in $\widetilde{\mathfrak A}_J$ and it satisfies the condition required by Proposition \ref{formuleconsuperminimals} (\ref{formuleconsuperminimals2}), hence $\mathfrak U=\widetilde{\mathfrak A}_J$.
\end{proof}

We are convinced that this version can be strongly strengthened drawing inspiration from some of the improvements studied for the computation of Gr\"obner bases and border bases. In this direction, we have already developed  a first prototype which is giving good and promising results. In the following table, we report the results of the computation of the marked schemes considered in Example \ref{ex:4tP3}. The algorithm has been run on a MacBook Pro with a 2,4 GHz Intel Core 2 Duo processor.
\[
\begin{array}{|c|c|c|c|}
\cline{2-4}
\multicolumn{1}{c|}{\phantom{\Big\vert}} & \text{Parameters} & \text{Equations} & \text{Time}\\
\hline
\phantom{\Big\vert}\Mf(\underline J_1) & 28 & 28 & 0.165568 \text{ seconds} \\
\hline
\phantom{\Big\vert}\Mf(\underline{J}_2) & 44 & 64 & 0.295802 \text{ seconds}\\
\hline
\phantom{\Big\vert}\Mf\big((\underline{J}_3)_{\geq 4}\big) & 88 & 228 &  110050 \text{ seconds}\\
\hline
\end{array}
\]
 The prototype of the algorithm is available at \\
\href{http://www.personalweb.unito.it/paolo.lella/HSC/Documents/MarkedSchemes.m2}{\texttt{www.personalweb.unito.it/paolo.lella/} \texttt{HSC/Documents/MarkedSchemes.m2}}

\section*{Acknowledgments}

The first, third and fourth authors were supported by the PRIN \lq\lq Geometria delle variet\`{a} algebriche e dei loro spazi di moduli\rq\rq, cofinanced by MIUR (Italy) (cofin 2008). The second author was supported by the PRIN \lq\lq Geometria Algebrica e Aritmetica, Teorie Coomologiche e Teoria dei Motivi\rq\rq, cofinanced by MIUR (Italy) (cofin 2008) and by FARO (Finanziamento per l'Avvio di Ricerche Originali), Polo delle Scienze e delle Tecnologie, Univ. di Napoli Federico II, co-financed by Compagnia San Paolo (2009).

\bibliographystyle{plain}

\end{document}